\theoremstyle{Theorem}
\newtheorem{theorem}{Theorem}[section]
\newtheorem*{maintheorem}{Theorem}
\newtheorem{corollary}[theorem]{Corollary}
\newtheorem{lemma}[theorem]{Lemma}
\theoremstyle{definition}
\newtheorem{definition}[theorem]{Definition}
\newtheorem{example}[theorem]{Example}
\newtheorem{remark}[theorem]{Remark}
\newcommand{\C}[1]{\ensuremath{\mathcal{#1}}}
\newcommand{\B}[1]{\ensuremath{\mathbf{#1}}}
\newcommand{\FF}{\mathbf F}
\newcommand{\ZZ}{\mathbf Z}
\newcommand{\RR}{\mathbf R}
\newcommand{\QQ}{\mathbf Q}
\newcommand{\CC}{\mathbf C}
\newcommand{\NN}{\mathbf N}
\newcommand{\PP}{\mathbf P}
\newcommand{\fm}{\mathfrak m}
\newcommand{\fp}{\mathfrak p}
\newcommand{\mbf}{\mathbf}
\newcommand{\mcal}{\mathcal}
\newcommand{\Map}{\operatorname{Map}}
\newcommand{\Lip}{\operatorname{Lip}}
\newcommand{\ord}{\operatorname{ord}}
\def\W{\mathbf{W}}
\def\C{\mathcal{C}}
\def\cU{\mathcal{U}}
\newcommand{\PGL}{\textnormal{PGL}}
\newcommand{\SL}{\textnormal{SL}}
\newcommand{\Lev}{\textnormal{Lev}}
\def\Teich{Teichm\"{u}ller }
\def\tn{\textnormal}
\colorlet{darkgreen}{green!50!black}
\def\onto{\twoheadrightarrow}
\def\P1{\mathbf{P}^{1}(\QQ_p)}
\def\root{T_0}
\def\tree{\mathcal{T}}
\def\IV{\operatorname{IV}}
\author{Lance Edward Miller \and Benjamin Steinhurst}
\address[L.E.~Miller]{Department of Mathematical Sciences, University of Arkansas, Fayetteville, AR~72701} \email[L.E.~Miller]{lem016@uark.edu}
\address[B.~Steinhurst]{Department of Mathematics and Computer Science, McDaniel College, Westminster MD 21157}%, USA
\email[B.~Steinhurst]{bsteinhurst@mcdaniel.edu}
\title[Witt-Burnside rings and Lipschitz continuous functions]{Witt-Burnside functor attached to $\ZZ_p^2$ and $p$-adic Lipschitz continuous functions.}
\begin{document}

%\tableofcontents

\begin{abstract}
Dress and Siebeneicher gave a significant generalization of the construction of Witt vectors, by producing for any profinite group $G$, a ring-valued functor $\mathbf{W}_G$. This paper gives a concrete interpretation of the rings $\mathbf{W}_{\mathbf{Z}_p^2}(k)$ where $k$ is a field of characteristic $p > 0$ in terms of rings of Lipschitz continuous functions on the $p$-adic upper half plane $\mathbf{P}^1(\mathbf{Q}_p)$. As a consequence we show that the Krull dimensions of the rings $\mathbf{W}_{\mathbf{Z}_p^d}(k)$ are infinite for $d \geq 2$ and we show the Teichm\"uller representatives form an analogue of the van der Put basis  for continuous functions on $\mathbf{Z}_p$. 
\end{abstract}

\maketitle

\section{Introduction}

Rings of $p$-typical Witt vectors are ubiquitous in number theory and commutative algebra. A generalization due to Dress and Siebeneicher \cite{DS88} which introduces a ring valued functor $\W_G$ for each profinite group $G$ interpolates between many of the known generalizations, recovering the classical $p$-typical case with $G = \ZZ_p$ as an additive group and Cartier's `big' Witt vectors in the case $G = \widehat{\ZZ}$. We denote by $\W(k)$ the $p$-typical Witt vectors $\W_{\ZZ_p}(k)$ over a field $k$. These generalizations have been an important area of recent growing study \cite{Ell06, Mil, Mil13, OH1, OH2, OH12}. Despite this, very little is known about the image of $\W_G$ when $G$ is infinite besides the cases $G = \ZZ_p$ or $G = \widehat{\ZZ}$. In particular, besides these cases there are no known interpretations of the images of $\W_G$ in terms of other more familiar rings. The present article fills this gap in the case $G = \ZZ_p^2$. 

The first author has explored the image of $\W_G$ in the pro-$p$ cases $G = \ZZ_p^d$ when $d \geq 2$ \cite{Mil} and in the pro-dihedral case $G = \varprojlim D_{2^n}$ \cite{Mil13} over fields of characteristic $p$. There it is shown that the rings in the image are far from expected as compared to the more classic $p$-typical case as in these cases the image need not be a domain nor even noetherian (\cite[Thm. 4.5]{Mil} for the case $G = \ZZ_p^d$ with $d \geq 2$ and \cite[Thm. 4.6]{Mil13} for the case when $G =  \varprojlim D_{2^n}$.) One of the most important remaining areas of study is the nature of the prime spectrum. Using this as motivation, we give a concrete interpretation of a quotient of the ring $\W_{\ZZ_p^2}(k)$, when $k$ has characteristic $p$, as subring of the ring of continuous $p$-typical valued functions on $\PP^1(\QQ_p)$ which is sufficient to give a large source of prime ideals. 

This study starts by exploring a natural generalization of the usual topology on the $p$-typical Witt vectors to the rings $\W_{\ZZ_p^d}(k)$ where $d \geq 2$ and $k$ is a field of characteristic $p$, we call it the {\it initial vanishing topology}. This topology is based on a filtration of ideals introduced in \cite[Def. 2.12]{Mil} and is defined for any image of $\W_G$ where $G$ is any profinite group. The initial vanishing topology agrees with the natural profinite topology  in the case that $G$ is topologically finitely generated. For the $p$-typical Witt vectors (i.e., the case when $G = \ZZ_p$) these two topologies are the same as the topology defined by the maximal ideal.  In the case where $G$ is pro-$p$ and $k$ is a field of characteristic $p$ (or just local), the rings $\W_G(k)$ are also local (\cite[Thm. 2.16]{Mil}) and so inherit a topology coming from its maximal ideal which also can be thought of as generalizing the usual topology on the $p$-typical Witt vectors. We show these two topologies differ in the case $G = \ZZ_p^d$ when $d \geq 2$.

\begin{maintheorem} $($Corollary~\ref{cor:TopNotSame}$)$
For $k$ a field of characteristic $p$, the initial vanishing topology and the topology defined by the maximal ideal in $\W_{\ZZ_p^d}(k)$ for $d \geq 2$ do not agree. 
\end{maintheorem}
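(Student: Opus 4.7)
The strategy is to compare the two filtrations using the main identification theorem of this paper. My plan is to transport both topologies onto (a quotient of) $\W_{\ZZ_p^2}(k)$ viewed as a ring of Lipschitz $\W(k)$-valued functions on $\P1$, and then exhibit an explicit element or sequence distinguishing them. Since the case $d=2$ is the essential one and both topologies descend through quotients, it will suffice to show disagreement at that level; the passage to $d \geq 2$ then follows from the natural projection $\W_{\ZZ_p^d}(k) \to \W_{\ZZ_p^2}(k)$.

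First I would translate both filtrations into function-theoretic language. The initial vanishing filtration $\{I_n\}$, being defined by vanishing at open subgroups of bounded index in $\ZZ_p^d$, should correspond to a uniform condition---roughly, that the associated function takes values in the $n$th power of the maximal ideal of $\W(k)$ uniformly across $\P1$. The maximal ideal $\fm$, by contrast, has image the ideal of Lipschitz functions whose values lie pointwise in the maximal ideal of $\W(k)$, and its powers $\fm^n$ are generated multiplicatively; these will typically entangle a pointwise value condition with constraints coming from products of Lipschitz constants.

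Next, I would exploit the fact that $\ZZ_p^d$ with $d \geq 2$ has infinitely many open subgroups of each fixed index, giving rise to an abundance of independent ``directions'' in which a Witt vector can be small---directions invisible to the single scalar filtration $\{\fm^n\}$. Concretely, I would build a sequence of elements out of Teichm\"uller representatives indexed by distinct open subgroups of a common index, or equivalently supported on shrinking balls in $\P1$, and show that this sequence is Cauchy in one topology but not the other. The van der Put-style basis mentioned in the abstract is the natural device for this construction, since it furnishes canonical expansions whose coefficients can be tracked separately against $I_n$ and against $\fm^n$.

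The hardest part will be pinning down the precise description of $\fm^n$ under the function-theoretic identification. Because $\W_{\ZZ_p^d}(k)$ is not noetherian for $d \geq 2$ by \cite[Thm. 4.5]{Mil}, the ideal $\fm$ is not finitely generated, and the usual noetherian tools such as the Krull intersection theorem and Artin--Rees are unavailable. Overcoming this obstacle will rely on the Teichm\"uller/van der Put basis to expand arbitrary elements canonically, and then on carefully tracing how the multiplicative generation of $\fm^n$ interacts with the uniform sup-norm control that defines the initial vanishing topology---it is precisely this gap between multiplicative and uniform control that should ultimately witness the failure of the two topologies to coincide.
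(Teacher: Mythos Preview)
Your strategy is genuinely different from the paper's and considerably more elaborate. The paper proves this corollary directly in Section~\ref{sec:Top}, \emph{before} the function-ring identification is even established, by a short algebraic argument (Theorem~\ref{thm:diff_fil}): for any $r \geq 1$ and $n \geq r$ there exist linked $\ZZ_p^d$-sets $T_n$, $T_n'$ of size $p^n$ by \cite[Lem.~3.4]{Mil}, and \cite[Thm.~4.4]{Mil} then gives $\omega_{T_n}(1) \notin \fm^2$, while obviously $\omega_{T_n}(1) \in I_{p^r}$. Thus $I_{p^r} \not\subset \fm^2$ for every $r$, so $\fm^2$ is not open in the initial vanishing topology. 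No function rings, no Cauchy sequences, no analysis of $\fm^n$ for $n > 2$.

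Your route via $\Lip_{p^{-1}}(\PP^1(\QQ_p),\W(k))$ is plausible in outline, and your instinct that Teichm\"uller elements are the right witnesses is exactly correct---the paper uses them too, just without the detour through function space. But you explicitly flag the description of $\fm^n$ in the function ring as ``the hardest part'' and then leave it unresolved; that is a genuine gap, not a routine verification. The obstruction you would eventually meet is that a function $f$ with $\|f\|_{\sup} \leq p^{-n}$ need not be of the form $p \cdot g$ with $g$ still in $\Lip_{p^{-1}}$, since dividing by $p$ multiplies the Lipschitz constant by $p$. Turning that observation into a proof that a specific element lies outside $\fm^2$ still requires real work---essentially a function-theoretic reproof of \cite[Thm.~4.4]{Mil}. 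The paper sidesteps all of this by citing that result directly, and in particular never needs Theorem~\ref{thm:isomorphism} for this corollary.
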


There is a particular technical requirement (called {\it the ratio property}, see \ref{dff:ratio}) which is always satisfied when $G$ is abelian, and allows for a norm generating the initial vanishing topology. This is particularly interesting in the case that $G = \ZZ_p^2$, where we determined that $p$ is not a zero divisor in $\W_{\ZZ_p^2}(k)$ where $k$ is a field of characteristic $p > 0$, and so this norm extends uniquely to give a $\QQ_p$-algebra norm on $\W_{\ZZ_p^2}(k)[{1 \over p}]$, see Section~\ref{sec:norms}. 

When $k$ is a field of characteristic $p$, $\W_{\ZZ_p^2}(k)$ is also reduced \cite[Thm. 5.17]{Mil}. This shows immediately that a certain natural family of prime ideals defined in Section~\ref{sec:PI} are provably not exhaustive of prime ideals in $\W_{\ZZ_p^2}(k)$. The following identification of $\W_{\ZZ_p^2}(k)$ as a ring of functions allows us to construct many more. 

\begin{maintheorem}$($Theorem~\ref{thm:isomorphism}$)$
%\label{thm:isomorphism} 
There is a homomorphism from $$\W_{\ZZ_p^2}(k) \to \C(\PP^1(\QQ_p),\W(k)),$$ where $\C(\PP^1(\QQ_p),\W(k))$ is the ring of continuous functions from $\PP^1(\QQ_p)$ to the $p$-typical Witt vectors $\W(k)$.  
\end{maintheorem}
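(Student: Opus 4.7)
The plan is to build $\Phi$ out of the functoriality of the Witt--Burnside construction. Identify $\PP^1(\QQ_p)$ with the set of rank-one closed direct summands $H \leq \ZZ_p^2$ via $[a:b]\mapsto \ZZ_p\cdot(a,b)$ for any primitive representative $(a,b)\in\ZZ_p^2$; for each such $H$ the quotient $\ZZ_p^2/H$ is isomorphic to $\ZZ_p$. By functoriality of the Witt--Burnside construction, the surjection $\pi_H : \ZZ_p^2 \onto \ZZ_p^2/H$ induces a ring homomorphism $\W(\pi_H) : \W_{\ZZ_p^2}(k) \to \W_{\ZZ_p^2/H}(k)$. After fixing, for each $H$, an identification $\W_{\ZZ_p^2/H}(k) \cong \W(k)$, set
$$
\Phi : \W_{\ZZ_p^2}(k) \longrightarrow \Map(\PP^1(\QQ_p), \W(k)), \qquad \Phi(x)([H]) = \W(\pi_H)(x).
$$
That $\Phi$ respects sum and product is immediate from the pointwise ring structure on the codomain and the fact that each $\W(\pi_H)$ is a ring homomorphism.

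The substantive step is to show $\Phi(x) \in \C(\PP^1(\QQ_p),\W(k))$. Because the topology on $\W(k)$ is the inverse-limit topology coming from the truncations $\W(k) \onto W_n(k)$, it suffices to prove that for every fixed $x$ and $n$, the map $[H] \mapsto \W(\pi_H)(x) \bmod V^n\W(k)$ is locally constant on $\PP^1(\QQ_p)$. For this I would use the presentation of $\W_{\ZZ_p^2}(k)$ by its generalized ghost components $\varphi_U$ indexed by open finite-index subgroups $U \leq \ZZ_p^2$, together with the standard compatibility $\varphi_{U'} \circ \W(\pi_H) = \varphi_{\pi_H^{-1}(U')}$ for $U' \leq \ZZ_p^2/H$. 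The truncation of $\W(\pi_H)(x)$ at level $n$ is then encoded by the finite list $\varphi_{H + p^m\ZZ_p^2}(x)$ for $0 \leq m < n$.

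Local constancy then follows from an elementary observation about $\PP^1(\QQ_p)$: if $[H_0]$ has a primitive generator $v_0$, then every $[H]$ admitting a primitive representative $v$ with $v \equiv \lambda v_0 \pmod{p^n\ZZ_p^2}$ for some $\lambda \in \ZZ_p^\times$ satisfies $H + p^n\ZZ_p^2 = H_0 + p^n\ZZ_p^2$, and such $[H]$ form an open neighborhood of $[H_0]$ in $\PP^1(\QQ_p)$. On this neighborhood the finitely many relevant ghost components of $x$ are literally unchanged, so the truncated image in $\W(k)$ is constant, yielding continuity.

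The main obstacle is the coherence of the identifications $\W_{\ZZ_p^2/H}(k) \cong \W(k)$: two isomorphisms $\ZZ_p^2/H \cong \ZZ_p$ differ by $\Aut(\ZZ_p) = \ZZ_p^\times$, and the induced action on $\W(k)$ is not a priori trivial, so one must choose the identifications locally in $[H]$ so that they patch into a single well-defined function on $\PP^1(\QQ_p)$. Once that is settled, functoriality makes $\Phi$ automatically additive and multiplicative, and the ghost-component bookkeeping above reduces continuity to a topological statement about how the lattice of open overgroups of $H$ in $\ZZ_p^2$ varies with $[H]$.
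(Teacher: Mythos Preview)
Your construction is essentially the paper's: the functorial projection $\W(\pi_H)$ is precisely the paper's ``read the Witt coordinates along the rooted path determined by $H$'' map (cf.\ Section~\ref{sec:PI}, where it is shown that the ring map $\W_{\ZZ_p^d}(k)\to\W_{\ZZ_p}(k)$ induced by a surjection $f$ is projection onto the coordinates indexed by the chain of cyclic $\ZZ_p^d$-sets $\ZZ_p^d/(\ker f + p^r\ZZ_p^d)$). Packaging these projections over all $[H]\in\PP^1(\QQ_p)$ gives exactly $\Phi$. Using functoriality to obtain the homomorphism property is slightly slicker than the paper's route via Lemma~\ref{lem:WittPolyAgree}, but the content is the same.

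There is one genuine gap. Your continuity argument rests on the claim that the length-$n$ truncation of $\W(\pi_H)(x)$ is ``encoded by the finite list $\varphi_{H+p^m\ZZ_p^2}(x)$ for $0\le m<n$.'' In characteristic $p$ this is false: the ghost map is not injective, and indeed for $\mbf{a}\in\W(k)$ one has $W_m(\mbf{a})=a_0^{p^m}$, so the ghost components see only $a_0$. The repair is immediate and does not require ghost components at all: since $\W(\pi_H)$ is literally coordinate projection onto the $\ZZ_p^2$-sets $\ZZ_p^2/(H+p^m\ZZ_p^2)$, the length-$n$ truncation of $\W(\pi_H)(x)$ is the tuple $(x_{\ZZ_p^2/(H+p^m\ZZ_p^2)})_{0\le m<n}$. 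Your lattice observation that $H+p^n\ZZ_p^2=H_0+p^n\ZZ_p^2$ forces $H+p^m\ZZ_p^2=H_0+p^m\ZZ_p^2$ for all $m\le n$ then gives local constancy (in fact the paper's $p^{-1}$-Lipschitz bound) directly.

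Your flagged ``main obstacle'' is a non-obstacle. Any two isomorphisms $\ZZ_p^2/H\cong\ZZ_p$ differ by some $u\in\Aut(\ZZ_p)=\ZZ_p^\times$, but multiplication by $u$ fixes every subgroup $p^n\ZZ_p$, hence acts trivially on $\mcal{F}(\ZZ_p)$ and therefore trivially on $\W_{\ZZ_p}(k)$. So the identification $\W_{\ZZ_p^2/H}(k)\cong\W(k)$ is canonical and no patching is needed.
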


%The more precise version of this theorem is stated as Theorem~\ref{thm:isomorphism} where we also pin down exactly the kernel and image of $\Phi$, making it an isomorphism. This identification we can utilize techniques of studying prime ideals in function spaces to prove the following as an application. 

This identification can be utilized to study prime ideals of $\W_{\ZZ_p^2}(k)$ via techniques used to study prime ideals of $\C(\PP^1(\QQ_p),\W(k))$. An application of this approach is the following.

\begin{maintheorem}$($Theorem~\ref{thm:diminfinite}$)$
%\label{thm:diminfinite}
The Krull dimension of $\W_{\ZZ_p^d}(k)$ is infinite for $d \geq 2$. 
\end{maintheorem}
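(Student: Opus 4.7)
The approach is to reduce to $d = 2$ and then exploit Theorem~\ref{thm:isomorphism} to transport the problem to the function ring $\C(\PP^1(\QQ_p), \W(k))$, where an infinite chain of primes can be built from the abundance of clopen subsets of $\PP^1(\QQ_p)$ and the valuation structure on $\W(k)$.

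For the reduction, the Dress--Siebeneicher functor is covariant in continuous surjections of profinite groups, so the projection $\ZZ_p^d \onto \ZZ_p^2$ onto, say, the first two coordinates induces a ring surjection $\W_{\ZZ_p^d}(k) \onto \W_{\ZZ_p^2}(k)$. Since $\dim R \geq \dim(R/I)$ for any ideal $I$, it suffices to prove that $\dim \W_{\ZZ_p^2}(k) = \infty$.

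Set $X = \PP^1(\QQ_p)$ and $R = \C(X, \W(k))$, and let $\phi : \W_{\ZZ_p^2}(k) \to R$ be the homomorphism of Theorem~\ref{thm:isomorphism}. The space $X$ is an infinite Cantor set and $\W(k)$ is a complete DVR, so $R$ has a rich prime spectrum: for each $x \in X$ the evaluation map $R \to \W(k)$ is surjective with prime kernel $\fp_x$, and below these kernels lie infinitely many more primes arising from the interplay between the Boolean algebra of clopens and the valuation on $\W(k)$. Concretely, I would fix $x_\infty \in X$ and a strictly nested sequence of clopen neighborhoods $U_1 \supsetneq U_2 \supsetneq \cdots$ with $\bigcap_n U_n = \{x_\infty\}$, set $V_n := U_n \setminus U_{n+1}$, and, using these mutually orthogonal clopen shells together with non-principal ultrafilters on $\NN$ (or, more systematically, along the lines of the Gillman--Jerison analysis of primes in rings of continuous functions), produce an infinite strictly ascending tower of primes $\fp_0 \subsetneq \fp_1 \subsetneq \fp_2 \subsetneq \cdots$ in $R$, all lying inside the maximal ideal $M_{x_\infty} = \{f : f(x_\infty) \in p\W(k)\}$.

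The main obstacle is the transfer to $\W_{\ZZ_p^2}(k)$: the pullbacks $\phi^{-1}(\fp_n)$ are prime, but need not remain strictly nested even when $\phi$ is injective. To secure strictness I would exhibit, for each $n$, an element of the image of $\phi$ lying in $\fp_{n+1} \setminus \fp_n$; the natural choices are the Teichm\"uller lifts in $\W_{\ZZ_p^2}(k)$ of the shell characteristic functions $\chi_{V_n}$, which are locally constant and therefore Lipschitz continuous, so they lie in the image of $\phi$ (the ring of Lipschitz functions described in the abstract). Once these separating elements are in place, the chain $\{\phi^{-1}(\fp_n)\}_{n \geq 0}$ is strictly ascending and witnesses $\dim \W_{\ZZ_p^2}(k) = \infty$, thereby concluding the argument.
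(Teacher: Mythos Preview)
Your reduction to $d=2$ and the passage to the function ring via Theorem~\ref{thm:isomorphism} match the paper's approach exactly, and the idea of building a chain of primes with an ultrafilter is also what the paper does. The gap is in your separating elements. The functions $\chi_{V_n}$ are idempotents in $\C(\PP^1(\QQ_p),\W(k))$, and an idempotent can \emph{never} witness a strict inclusion $\fp \subsetneq \fp'$ of primes: from $e(1-e)=0\in\fp$ one gets $e\in\fp$ or $1-e\in\fp$; in the first case $e\in\fp'$ already, and in the second $e\notin\fp'$ (else $1\in\fp'$). So no matter how you set up the chain $\fp_0\subsetneq\fp_1\subsetneq\cdots$, the $\chi_{V_n}$ cannot lie in any $\fp_{n+1}\setminus\fp_n$. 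There is a secondary problem as well: locally constant does not mean Lipschitz with constant $\le p^{-1}$. With the tree metric, $\chi_{V_n}$ has Lipschitz constant $p^{n}$, so for $n\ge 0$ it is not in the image $\Lip_{p^{-1}}(\PP^1(\QQ_p),\W(k))$ of $\phi$ at all.

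What the paper does instead is make essential use of the valuation on $\W(k)$, not just the Boolean algebra of clopens. It fixes a discrete sequence $E=\{e_n\}\subset\PP^1(\QQ_p)$ and a non-principal ultrafilter $\cU$ containing $E$, and for each ``growth template'' $\sigma:E\to\W(k)$ defines $\fp_\sigma=\{f:\ \exists\,A\in\cU,\ A\subset E,\ \exists\,c>0,\ \ord f(z)\ge c\cdot\ord\sigma(z)\ \text{for all }z\in A\}$, which is shown to be prime (Lemma~\ref{lem:ultraprime}). Taking $\sigma_i(e_n)=\omega_{p^{n^i}}(1)$ produces a descending chain $\fp_{\sigma_1}\supsetneq\fp_{\sigma_2}\supsetneq\cdots$, and strictness is witnessed by functions $f_i\in\Lip_{p^{-1}}$ with $\ord f_i(e_n)=n^i$; these are \emph{not} idempotent, and their existence inside $\Lip_{p^{-1}}$ requires a separate construction (Lemma~\ref{lem:funzero}), carried out directly on the tree so that the Lipschitz bound $p^{-1}$ is visibly satisfied. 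Your outline becomes correct once the vague ``Gillman--Jerison'' step is replaced by this growth-rate construction and the idempotent witnesses are replaced by such calibrated non-idempotent functions.
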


In determining the image of the homomorphism in the main theorem the metric structure of these spaces comes into play. In particular the image is precisely $\Lip_{p^{-1}}(\PP^1(\QQ_p),\W(k))$. Since both spaces ($\W_{\ZZ_p^d}(k)$ and $\Lip_{p^{-1}}(\PP^1(\QQ_p),\W(k))$) involved in this homomorphism are metric spaces we consider the analytic properties. It is noteworthy that the natural Lipschitz space metric on $\Lip_{p^{-1}}(\PP^1(\QQ_p),\W(k))$ is not the correct metric for our applications, instead we consider $\Lip_{p^{-1}}(\PP^1(\QQ_p),\W(k))$ as a subring of the metric space of continuous functions with the supremum norm.  Doing so, we can show that after factoring out the kernel, the induced map is also isometric, Theorem~\ref{thm:Isometry}. We give an analytic characterization of the image of the natural generalization of the \Teich elements for $\W(k)$ to $\W_{\ZZ_p^2}(k)$ which form a topological basis in $\W_{\ZZ_p^2}(k)$ in its initial vanishing topology.

\begin{maintheorem}$($Theorem~\ref{thm:TeichvdP}$)$
%\label{thm:TeichvdP}
The image of the \Teich basis for $\W_{\ZZ_p^2}(k)$ form a scaled analogue of a van der Put basis for $\Lip_{p^{-1}}(\PP^1(\QQ_p),\W(k))$. 
%Denote by $B \subset \Lip_{p^{-1}}(\PP^1(\QQ_p),\W(k))$ the image under $\Phi$ of the Teichm\"uller basis for $R$. 
%The set $B$ consists of elements of the{which? I think what you were trying to say in the earlier draft is that there is an analogue vdP for something like $\C(\ZZ_p, \W(k))$. This is what we should describe precisely before this theorem and state it as what you mean here. } van der Put basis for $C(\PP^1(\QQ_p),\W(k))$ scaled to have Lipschitz constant $1/p$. 
\end{maintheorem}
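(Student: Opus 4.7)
The plan is to first determine explicitly, via a ghost-coordinate computation, the image of each \Teich element under the homomorphism $\Phi \colon \W_{\ZZ_p^2}(k) \to \C(\PP^1(\QQ_p), \W(k))$ of Theorem~\ref{thm:isomorphism}, and then to transfer the known topological-basis property of the \Teich elements in $\W_{\ZZ_p^2}(k)$ to the target by means of the isometric identification of Theorem~\ref{thm:Isometry}.

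First I would recall the classical van der Put basis for $\C(\ZZ_p, \W(k))$: it is the family of characteristic functions of maximal balls (indexed so each ball appears exactly once), relative to which every continuous function admits a unique expansion with coefficients tending to zero. The analogue over $\PP^1(\QQ_p)$ naturally indexes characteristic functions by balls of $\PP^1(\QQ_p)$, equivalently by vertices of the Bruhat--Tits tree $\BT$. For the target to be $\Lip_{p^{-1}}$ rather than $\C$, the indicator $\mathbf{1}_B$ of a ball of radius $p^{-n}$ must be multiplied by a scale factor matching the Lipschitz modulus; it is this rescaled family that I would propose as the \emph{scaled van der Put basis} of $\Lip_{p^{-1}}(\PP^1(\QQ_p), \W(k))$.

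Second, I would compute $\Phi$ on a \Teich element indexed by a finite-index subgroup $H \leq \ZZ_p^2$ and a scalar $\alpha \in k$. The Dress--Siebeneicher description of $\W_{\ZZ_p^2}(k)$ places \Teich representatives in bijection with such pairs, and $\Phi$ is built by evaluating ghost coordinates against points of $\PP^1(\QQ_p)$ through the correspondence between cosets of $\ZZ_p^2$ and balls of $\PP^1(\QQ_p)$ that underlies Theorem~\ref{thm:isomorphism}. A term-by-term computation on ghost coordinates should yield $\Phi$ applied to this \Teich element equal to $[\alpha] \cdot \mathbf{1}_{B_H}$, where $[\alpha] \in \W(k)$ is the classical \Teich lift and $B_H \subset \PP^1(\QQ_p)$ is the ball corresponding to $H$; in particular, the Lipschitz modulus of this function is precisely the reciprocal of the radius of $B_H$, placing the image exactly on the sphere of the corresponding scaled indicator in $\Lip_{p^{-1}}$.

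Third, I would combine this explicit formula with two facts already established in the paper: the \Teich elements form a topological basis of $\W_{\ZZ_p^2}(k)$ in the initial vanishing topology, and $\Phi$ is isometric modulo its kernel by Theorem~\ref{thm:Isometry}. An isometric quotient carries a topological basis to a topological basis of its closed linear span, and a standard density argument---approximating a Lipschitz function by locally constant refinements on finer and finer ball decompositions---identifies that span with all of $\Lip_{p^{-1}}(\PP^1(\QQ_p), \W(k))$. The principal obstacle will be the ghost-coordinate computation in the second step: one must track carefully the bijection between cosets of $\ZZ_p^2$ and balls of $\PP^1(\QQ_p)$, including the point at infinity and the behavior at the root of $\BT$, and verify that the scale factor produced by the initial-vanishing filtration matches the Lipschitz rescaling rather than merely the supremum norm.
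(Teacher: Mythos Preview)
Your overall strategy---compute $\Phi$ on each \TM element directly, identify the result as a scaled indicator of a ball, and then invoke the isometry of Theorem~\ref{thm:Isometry}---is sound and is in fact more transparent than the paper's own argument, which routes through Schikhof's coefficient criterion (Exercise~63.B of \cite{Sch06}) characterizing $\Lip_\alpha(\ZZ_p,K)$ in terms of van der Put coefficient bounds after restricting $\PP^1(\QQ_p)$ to a copy of $\ZZ_p$. Your direct computation would yield the same conclusion with less external input.

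However, your proposed formula in the second step is wrong, and the error is precisely where the ``scaling'' in the theorem statement lives. The map $\Phi$ is defined on \emph{Witt} coordinates, not ghost coordinates: for $\mbf{a} \in R$ and a boundary point $x$ realized by the path $T_0,T_1,T_2,\dots$, one has $\Phi(\mbf{a})(x) = (a_{T_0},a_{T_1},a_{T_2},\dots) \in \W(k)$. Thus for a \TM element $\omega_T(\alpha)$ with $T$ a vertex of $\tree$ at depth $n$ (so $\#T = p^n$), the image is
\[
\Phi(\omega_T(\alpha))(x) \;=\;
\begin{cases}
(0,\dots,0,\alpha,0,\dots) & \text{if the path } x \text{ passes through } T,\\
0 & \text{otherwise,}
\end{cases}
\]
i.e.\ $\Phi(\omega_T(\alpha)) = \omega_n(\alpha)\cdot \mathbf{1}_{B_T}$, where $\omega_n(\alpha)\in\W(k)$ is the $p$-typical Witt vector with $\alpha$ in slot $n$ and zeros elsewhere. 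This has $\|\omega_n(\alpha)\|_{\W(k)} = p^{-n}$, not $1$; your claimed value $[\alpha]\cdot\mathbf{1}_{B_H}$ misses exactly this factor. With the correct value one checks that the nearest pair $x\in B_T$, $y\notin B_T$ satisfies $d_{\PP^1(\QQ_p)}(x,y)=p^{-(n-1)}$, so the Lipschitz constant of $\Phi(\omega_T(\alpha))$ is $p^{-n}\cdot p^{\,n-1}=p^{-1}$, matching the theorem. Your stated Lipschitz modulus (``reciprocal of the radius of $B_H$'') is inconsistent with either formula. Once you correct the value of $\Phi(\omega_T(\alpha))$ and drop the ghost-coordinate language, your argument goes through; the density step in your third paragraph is then superfluous, since $\Phi$ is already a surjective isometry onto $\Lip_{p^{-1}}(\PP^1(\QQ_p),\W(k))$ by Theorems~\ref{thm:isomorphism} and~\ref{thm:Isometry}. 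Also note that after passing to $R=\W_{\ZZ_p^2}(k)/J$ the relevant \TM elements are indexed by vertices of $\tree$ (cyclic $\ZZ_p^2$-sets), not by arbitrary open subgroups $H\le\ZZ_p^2$.
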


Throughout we assume the reader is familiar with the $p$-typical Witt vectors. To ease exposition, we include a more elaborate review of Witt-Burnside rings as Section~\ref{sec:WB}. This section is mostly expository and as self-contained as possible, including a restatement of a few key results in \cite{Mil}. We also review some basic information on function rings on ultrametric spaces in Section~\ref{sec:FunRing}. 

{\it Acknowledgments:} The authors would like to thank Roi Docampo-\'{A}lvarez, Paul Roberts, Keith Conrad, Kiran Kedlaya, Veronica Ertl, and Jim Stankewicz for many helpful discussions about the paper.

\section{Witt-Burnside rings}\label{sec:WB}

Witt-Burnside rings are constructed utilizing generalized Witt polynomials associated to a profinite group $G$. They were introduced by Dress and Seibeneicher \cite{DS88}. We refer the reader to this or \cite{Mil} for a more elaborate introduction but give a brief review now. The index set of these generalized polynomials is the set of isomorphism classes of discrete finite transitive $G$-sets called the {\it{frame}} of $G$ and denoted $\mcal{F}(G)$. There is a natural partial ordering on $\mcal{F}(G)$. For $T$ and $U$ in $\mcal{F}(G)$ we say $U \leq T$ if there is a $G$-map from $T$ to $U$. Denote the set of all $G$-maps from $T$ to $U$ as $\Map_G(T,U)$ and the number of $G$-maps $\# \Map_G(T,U)$ by $\varphi_T(U)$. Thus $\varphi_T(U) \neq 0$ if and only if $T \leq U$. We summarize some facts about $\mcal{F}(G)$. %which will be proved later (Lemma \ref{lem:fiber}).

\begin{enumerate}
\item If $T$ and $U$ in $\mcal{F}(G)$ with $U \leq T$, then $\# U$ divides $\# T$ and $\# T / \# U$ represents the size of any of the fibers of any element of $\Map_G(T,U)$. 
\item If the stabilizer subgroups of the points in $T$ are all equal (we will say in this case that $T$ has normal stabilizers or that $T$ is a normal $G$-set), then $\varphi_T(U) = \# U$ for $U \leq T$. 
\item For each $T$ in $\mcal{F}(G)$, there are only finitely many $U$ in $\mcal{F}(G)$ with $U \leq T$. 
\end{enumerate} The elements of $\mcal{F}(G)$ have a concrete description. Every finite transitive $G$-set $T$ is isomorphic to some coset space $G/H$ with left $G$-action, where $H$ is an open subgroup of $G$ that can be chosen as the stabilizer subgroup of any point in $T$.  
The partial order $\leq$ on coset spaces (considered as $G$-sets up to isomorphism) can be described concretely by $G/K \leq G/H$ if and only if $H$ is conjugate to a subgroup of $K$ (or equivalently, $H$ is a subgroup of a conjugate of $K$). 

For $T \in \mcal{F}(G)$, 
define the $T$-th {\it Witt polynomial} to be 
\begin{equation}\label{Wdef}
W_T(\{X_U\}_{U \in \mcal{F}(G)}) = \sum\limits_{U \leq T} \varphi_T(U) X_U^{\#T / \#U} = X_0^{\#T} + \ldots + \varphi_T(T) X_T,
\end{equation}
where $0$ denotes the trivial $G$-set $G/G$. Trivially $\varphi_T(0) = 1$ for all $T$ in $\mcal{F}(G)$. 
This is a finite sum since there are only finitely many 
$U \leq T$.

For instance, if $G = \mbf{Z}_p$ then the finite transitive $G$-sets up to isomorphism 
are $\mbf{Z}_p/p^n\mbf{Z}_p$ for $n \geq 0$ and the Witt polynomial associated to $\mbf{Z}_p/p^n\mbf{Z}_p$ is the classical $n$-th $p$-typical Witt polynomial. Figure \ref{fig:frame} displays the frame of $\ZZ_2^2$. When $G = \ZZ_p^2$, the primary case of study in this article, all $G$-sets have $p+1$ covers (that is, there are exactly $p+1$ $G$-sets in the frame lying immediately above each $G$-set). Other than the trivial $G$-set, each $G$-set below the horizontal line in Figure \ref{fig:frame} has $p$ covers also below the horizontal line. This line is not part of the frame and depicts a devision among types of $\ZZ_p^d$ sets based on their {\it level} as defined in \cite[Def. 5.1]{Mil}. 

\begin{figure}[htb]
\begin{center}
\scalebox{0.8}{
\begin{tikzpicture}[smooth]

% Draw axes

\fill[black] (0,0) circle (0.1cm);

\fill[black] (1,0) circle (0.1cm);
\fill[black] (1,-1) circle (0.1cm);
\fill[black] (1,1) circle (0.1cm);

\fill[black] (3,1.25) circle (0.1cm);
\fill[black] (3,0.75) circle (0.1cm);

\fill[black] (3,0.25) circle (0.1cm);
\fill[black] (3,-0.25) circle (0.1cm);

\fill[black] (3,-0.75) circle (0.1cm);
\fill[black] (3,-1.25) circle (0.1cm);

\fill[black] (3,2.5) circle (0.1cm);

\fill[black] (5,3) circle (0.1cm);
\fill[black] (5,2.5) circle (0.1cm);
\fill[black] (5,2) circle (0.1cm);

\fill[black] (5,-1.2) circle (0.1cm);
\fill[black] (5,-1.45) circle (0.1cm);

\fill[black] (5,-0.65) circle (0.1cm);
\fill[black] (5,-0.9) circle (0.1cm);

\fill[black] (5,-0.15) circle (0.1cm);
\fill[black] (5,-0.4) circle (0.1cm);

\fill[black] (5,0.35) circle (0.1cm);
\fill[black] (5,0.1) circle (0.1cm);

\fill[black] (5,0.85) circle (0.1cm);
\fill[black] (5,0.6) circle (0.1cm);

\fill[black] (5,1.35) circle (0.1cm);
\fill[black] (5,1.1) circle (0.1cm);

\fill[black] (7,0) circle (0.05cm);
\fill[black] (7.2,0) circle (0.05cm);
\fill[black] (7.4,0) circle (0.05cm);

\fill[black] (5.5,2.5) circle (0.05cm);
\fill[black] (5.7,2.5) circle (0.05cm);
\fill[black] (5.9,2.5) circle (0.05cm);

\fill[black] (4,3.25) circle (0.05cm);
\fill[black] (4.2,3.45) circle (0.05cm);
\fill[black] (4.4,3.65) circle (0.05cm);
%\fill[black] (5.7,2.5) circle (0.05cm);
%\fill[black] (5.9,2.5) circle (0.05cm);

\draw[-] (0,0) -- (1,0);
\draw[-] (0,0) -- (1,-1);
\draw[-] (0,0) -- (1,1);

\draw[-] (1,1) -- (3,1.25);
\draw[-] (1,1) -- (3,0.75);

\draw[-] (1,0) -- (3,0.25);
\draw[-] (1,0) -- (3,-0.25);

\draw[-] (1,-1) -- (3,-0.75);
\draw[-] (1,-1) -- (3,-1.25);

\draw[-] (-1,1.5) -- (6,1.5);

\draw[-] (1,1) -- (3,2.5);
\draw[-] (1,0) -- (3,2.5);
\draw[-] (1,-1) -- (3,2.5);

\draw[-] (3,2.5) -- (5,3);
\draw[-] (3,2.5) -- (5,2.5);
\draw[-] (3,2.5) -- (5,2);

\draw[-] (3,1.25) -- (5,1.35);
\draw[-] (3,1.25) -- (5,1.1);

\draw[-] (3,0.75) -- (5,0.85);
\draw[-] (3,0.75) -- (5,0.6);

\draw[-] (3,0.25) -- (5,0.35);
\draw[-] (3,0.25) -- (5,0.1);

\draw[-] (3,-0.25) -- (5,-0.15);
\draw[-] (3,-0.25) -- (5,-0.4);

\draw[-] (3,-0.75) -- (5,-0.65);
\draw[-] (3,-0.75) -- (5,-0.9);

\draw[-] (3,-1.25) -- (5,-1.2);
\draw[-] (3,-1.25) -- (5,-1.45);

\draw[-] (3,1.25) -- (5,3);
\draw[-] (3,0.75) -- (5,3);

\draw[-] (3,0.25) -- (5,2.5);
\draw[-] (3,-0.25) -- (5,2.5);

\draw[-] (3,-0.75) -- (5,2);
\draw[-] (3,-1.25) -- (5,2);

\node at (-0.75,0) {$\ZZ_2^2/\ZZ_2^2$} {};
\node at (2,2.5) {$\ZZ_2^2/2 \ZZ_2^2$} {};

%\draw (5,3) circle (0.15cm);
%\draw (5,2.5) circle (0.15cm);
%
%\draw (5,-1.2) circle (0.15cm);
%\draw (5,-1.45) circle (0.15cm);
%
%\node at (5.4,-1.15) {$T_1$} {};
%\node at (5.4,-1.5) {$T_2$} {};
%
%\node at (5.4,3) {$V_1$} {};
%\node at (5.4,2.2) {$V_2$} {};

\end{tikzpicture}
}
\end{center}
\caption{The frame $\mcal{F}(\ZZ_2^2)$.}
\label{fig:frame}
\end{figure}

%\begin{figure}[h]
%\begin{center}
%\scalebox{0.3}{\includegraphics{FrameZ2sq.pdf}}
%\end{center}
%\caption{The frame $\mcal{F}(\ZZ_2^2)$.}
%\label{fig:frame}
%\end{figure}

\begin{remark}
\label{rmk:trees}
The picture in Figure \ref{fig:frame} is reminiscent of the tree 
of $\ZZ_2$-lattices in $\QQ_2^2$ up to $\ZZ_2$-scaling, on which $\PGL_2(\QQ_2)$ acts \cite[p.~71]{Ser80}; i.e., the Bruhat-Tits building for $\SL_2(\QQ_p)$. However, it is different since the $\ZZ_2$-sets $\ZZ_2^2/2^r\ZZ_2^2$ for different $r$ appear as separate vertices in Figure \ref{fig:frame}, while the subgroups $2^r\ZZ_2^2$ all correspond to the same vertex in the tree for $\PGL_2(\QQ_2)$. We exploit the relationship between $\mcal{F}(\ZZ_p^2)$ and the tree described in \cite{Ser80} to prove the theorems in the introduction. 
\end{remark}

To simplify notation, write a tuple of variables $X_T$ indexed by all $T$ in $\mathcal{F}(G)$ 
as $\underline{X}$, e.g.,  
$$W_T(\{X_U\}_{U \in \mcal{F}(G)}) = W_T(\underline{X}), 
\ZZ[\{X_T\}_{T \in \mcal{F}(G)}] = \ZZ[\underline{X}],$$ and $\ZZ[\{X_T, Y_T\}_{T \in \mcal{F}(G)}] = \ZZ[\underline{X}, \underline{Y}]$.  
This underline notation of course depends on $G$ but is surpressed.  For any commutative ring $A$, 
a polynomial 
$f(\underline{X}) \in \ZZ[\underline{X}]$ 
defines a function from $\prod_{T \in \mcal{F}(G)} A$ to $A$. For a tuple $\mbf{a} = (a_T)_{T \in \mcal{F}(G)}$ with each $a_T \in A$ we write  
$f(\mbf{a})$ for the value $f(\{a_T\}_{T \in \mcal{F}(G)}) \in A$.  A similar meaning is 
applied to $f(\mbf{a},\mbf{b})$ for a polynomial 
$f(\underline{X},\underline{Y}) \in \ZZ[\underline{X},\underline{Y}]$.  Generally, we write sequences indexed by $\mcal{F}(G)$ as bold letters $(e.g., \mbf{a},\mbf{b},\mbf{x},\mbf{y},\mbf{v})$ and their $T$-th coordinate is in italics $(e.g., a_T,b_T,x_T,y_T,v_T)$.

Because $X_T$ appears on the right side of (\ref{Wdef}) just in the linear term $\varphi_T(T)X_T$, 
and all variables which appear in other terms are $X_U$ for $U < T$, we get 
the following uniqueness criterion for all the Witt polynomial values together which is equivalent to Lemma 2.1 in \cite[p. 331]{Ell06}.

\begin{theorem}\label{invertthm}
If $A$ is a commutative ring which has no $\varphi_T(T)$-torsion, 
then the function $\prod_{T \in \mcal{F}(G)} A \rightarrow 
\prod_{T \in \mcal{F}(G)} A$ given by 
$\mbf{a} \mapsto (W_T(\mbf{a}))_{T \in \mcal{F}(G)}$ is injective. This function is bijective provided each $\varphi_T(T)$ is a unit in $A$. 
\end{theorem}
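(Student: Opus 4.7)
The plan is to exploit the triangular structure of the Witt polynomials: in the formula
$$W_T(\underline{X}) \;=\; \sum_{U \leq T} \varphi_T(U)\, X_U^{\#T/\#U},$$
the variable $X_T$ appears only in the linear top term $\varphi_T(T) X_T$, and every other term involves some $X_U$ with $U < T$. Together with the finiteness of $\{U : U \leq T\}$ stated in fact (3) of the frame description, this makes the partial order on $\mcal{F}(G)$ well-founded, and hence amenable to induction.

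For injectivity, suppose $W_T(\mbf{a}) = W_T(\mbf{b})$ for every $T \in \mcal{F}(G)$; I would prove $a_T = b_T$ by well-founded induction on $T$. The base case is $T = 0 = G/G$, where $W_0(\underline{X}) = \varphi_0(0) X_0 = X_0$ forces $a_0 = b_0$ immediately. For the inductive step, assume $a_U = b_U$ for all $U < T$. Subtracting, the contributions from all such $U$ cancel and one is left with $\varphi_T(T)(a_T - b_T) = 0$, whence the no-$\varphi_T(T)$-torsion hypothesis gives $a_T = b_T$.

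For bijectivity when each $\varphi_T(T)$ is a unit in $A$, given a target tuple $(w_T)_{T \in \mcal{F}(G)}$ I would construct a preimage $\mbf{a}$ by the same induction. Set $a_0 = w_0$, and having defined $a_U$ for all $U < T$, set
$$a_T \;=\; \varphi_T(T)^{-1}\!\left(w_T \;-\; \sum_{U < T} \varphi_T(U)\, a_U^{\#T/\#U}\right).$$
By construction $W_T(\mbf{a}) = w_T$, and injectivity (already established) shows this is the unique preimage.

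There is no real obstacle beyond correctly setting up the induction. The one point worth being explicit about is well-foundedness of the partial order: any strictly descending chain $T > T_1 > T_2 > \cdots$ would yield a strictly descending chain of positive integers $\#T > \#T_1 > \#T_2 > \cdots$ (since $\#U$ divides $\#T$ with $\#U < \#T$ whenever $U < T$), which is impossible. With this in hand, the triangular shape of the Witt polynomials does all of the work.
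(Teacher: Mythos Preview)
Your proof is correct and follows exactly the approach indicated in the paper. The paper does not give a full proof but only records the key observation---that $X_T$ appears in $W_T(\underline{X})$ only in the linear term $\varphi_T(T)X_T$, with all other terms involving $X_U$ for $U < T$---and cites \cite[p.~331]{Ell06} for the details; your argument simply makes this triangular induction explicit.
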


Applying Theorem \ref{invertthm} to the ring 
$\QQ[\underline{X},\underline{Y}]$ and the vectors 
$(W_T(\underline{X}) + W_T(\underline{Y}))_{T \in \mcal{F}(G)}$ and 
$(W_T(\underline{X})W_T(\underline{Y}))_{T \in \mcal{F}(G)}$
one obtains unique families of polynomials $\{S_T(\underline{X},\underline{Y})\}$ and 
$\{M_T(\underline{X},\underline{Y})\}$ in 
$\QQ[\underline{X},\underline{Y}]$ satisfying 
$$
W_T(\underline{X}) + W_T(\underline{Y}) = W_T(\underline{S}) \text{ for all } T \in \mathcal{F}(G) \text{ and } 
$$
$$
W_T(\underline{X})W_T(\underline{Y}) = W_T(\underline{M}) \text{ for all } T \in \mathcal{F}(G).
$$
More explicitly, this says 
\begin{equation}\label{STF}
\sum_{U \leq T} \varphi_T(U) X_U^{\# T / \#U} + \sum_{U \leq T} \varphi_T(U) Y_U^{\# T / \#U} = 
\sum_{U \leq T} \varphi_T(U) S_U^{\# T / \#U} \text{ and } 
\end{equation}

\begin{equation}\label{MTF}
\left(\sum_{U \leq T} \varphi_T(U) X_U^{\# T / \#U}\right)\left(\sum_{U \leq 
T} \varphi_T(U) Y_U^{\# T / \#U}\right) = 
\sum_{U \leq T} \varphi_T(U) M_U^{\# T / \#U}
\end{equation}
for all $T$.  
The polynomials $S_T$ and $M_T$ each only 
depend on the variables $X_U$ and $Y_U$ for $U \leq T$. 

A significant theorem of Dress and Siebeneicher \cite[p.~107]{DS88}, which generalizes Witt's theorem 
($G = \ZZ_p$), says that
the polynomials $S_T$ and $M_T$ have coefficients in $\ZZ$. 
We call the $S_T$'s and $M_T$'s the Witt addition and multiplication polynomials, respectively. 
(Obviously they depend on $G$, but that dependence will not be part of the notation). 

\begin{example}
\label{xmp:MTST}
Taking $T = 0$, one has 
$$
S_0(\underline{X},\underline{Y}) = X_0 + Y_0 \tn{ and} \
M_0(\underline{X},\underline{Y}) = X_0Y_0.
$$  
If $T \cong G/H$ where $H$ is a maximal open 
subgroup, so $\{U \in \mcal{F}(G) \colon U \leq T\}$ is just $\{0,T\}$. Solving for $S_T$ and $M_T$ in 
(\ref{STF}) and (\ref{MTF}) yields
$$
S_T = X_T + Y_T + 
\frac{(X_0+Y_0)^{\#T} - X_0^{\#T} - Y_0^{\#T}}{\varphi_T(T)},$$
$$
M_T = X_0^{\#T}Y_T + X_TY_0^{\#T} + \varphi_T(T)X_TY_T. 
$$
\end{example}

%\begin{remark}
%If a finite group $G$ contains a maximal subgroup $H$ of index $4$ 
%(e.g., $G = A_4$ and $H = A_3$), 
%$H$ can't be a normal subgroup of $G$ since otherwise the $G$-set 
%$T = G/H$ has $\varphi_T(T) = \#T = 4$ and 
%$((X_0+Y_0)^4 - X_0^4 - Y_0^4)/4$ doesn't have all integral coefficients.
%Of course it can be proved by group theory alone that
%$H$ isn't normal:  if $H \lhd G$ then $G/H$ is a group of order 4 and 
%such a group contains a subgroup of order 2, which contradicts maximality of 
%$H$ in $G$. 
%\end{remark}

Compare these with the first two $p$-typical Witt addition and multiplication 
polynomials.
Generally, the addition and multiplication 
polynomials, even in the $p$-typical case, are prohibitively complicated to write out explicitly for $\# T$ large.
Since $S_T$ and $M_T$ have integral coefficients, they can be evaluated on any ring, including 
rings where the hypotheses of Theorem \ref{invertthm} break down, like a ring of characteristic $p$ 
when $G$ is a pro-$p$ group. % (Example \ref{xmp1}).

\begin{definition}
Let $G$ be a profinite group. 
For any commutative ring $A$, define the {\it Witt--Burnside ring} $\mbf{W}_G(A)$ to be 
$\prod_{T \in \mathcal{F}(G)} A$ as a set, with 
elements written as $\mbf{a} = (a_T)_{T \in \mathcal{F}(G)}$. 
Ring operations on $\mbf{W}_G(A)$ are 
defined using the Witt addition and multiplication polynomials:
$$
\mbf{a} + \mbf{b} = (S_T(\mbf{a},\mbf{b}))_{T \in \mathcal{F}(G)} \text{ and }
$$
$$
\mbf{a} \cdot \mbf{b} = (M_T(\mbf{a},\mbf{b}))_{T \in \mathcal{F}(G)}.
$$
The additive (resp. multiplicative) identity is $(0,0,0,\dots)$ (resp. $(1,0,0,\dots)$). 
\end{definition}

One typically proves an algebraic identity in $\W_G(A)$ by reformulating it 
as an identity in a ring of Witt vectors over a polynomial ring over $\ZZ$. In this article, when needed, we only demonstrate the reformulation but may not go through the deduction of the identity we want over $A$ from the identity proved over a 
polynomial ring; instead we simply invoke functoriality. 

Even when $G$ is not abelian $\W_G(A)$ is a commutative ring. For any homomorphism of commutative rings $f \colon A \rightarrow B$ 
define $\W_G(f) \colon \W_G(A) \rightarrow \W_G(B)$ 
by applying $f$ to the coordinates: 
$$
\W_G(f)(\mbf{a}) = (f(a_T))_{T \in \mcal{F}(G)} \in \W_G(B).
$$
%Because the polynomials $S_T$ and $M_T$ 
%have integral coefficients, $\W_G(f)$ is a ring homomorphism 
%and composition of ring homomorphisms is respected,  
This is a ring homomorphism and makes $\W_G$ a covariant functor from commutative rings to commutative rings.

%We previously saw that each Witt polynomial $W_T$ defines a function 
%$\W_G(A) \rightarrow A$ such that 
%$$
%W_T(\mbf{a} \oplus \mbf{b}) = W_T(\mbf{a}) + W_T(\mbf{b}) \text{ and } 
%W_T(\mbf{a} \odot \mbf{b}) = W_T(\mbf{a})W_T(\mbf{b}).
%$$
%Since $W_T(1,0,0,\dots) = 1$, $W_T \colon \W_G(A) \rightarrow A$ is a ring homomorphism.  That Witt polynomials 
%define ring homomorphisms out of $\W_G(A)$ will be very useful to us later.

Packaging all the Witt polynomials together, we get a ring homomorphism 
$W : \W_G(A) \to \prod_{T \in \mcal{F}(G)} A$ which is $W_T$ in the $T$-th coordinate:
$$W(\mbf{a}) = (W_T(\mbf{a}))_{T \in \mathcal{F}(G)} = \left( \sum_{U \leq T} \varphi_T(U) a_U^{\# T/ \# U} \right)_{T \in \mcal{F}(G)}.$$
This homomorphism is called the {\it ghost map} and its coordinates 
$W_T(\mbf{a})$ are called the {\it ghost components} of $\mbf{a}$.    
If $A$ fits the hypothesis of Theorem \ref{invertthm} then 
the ghost map is injective (i.e., the ghost components of $\mbf{a}$ determine $\mbf{a}$). 
Also from Theorem \ref{invertthm} the ghost map is bijective if every integer $\varphi_T(T)$ is invertible in $A$ so $\W_G(A) \cong \prod_{T \in \mcal{F}(G)} A$ by the ghost map.
That means $\W_G(A)$ is a new kind of ring only if 
some $\varphi_T(T)$ is not invertible in $A$, and especially 
if $A$ has $\varphi_T(T)$-torsion for some $T$ (e.g., $G$ is a nontrivial pro-$p$ group and $A$ has characteristic $p$). 

The coordinates on which a Witt vector is nonzero is called its {\it support}. While the ring operations in $\mbf{W}_G(A)$ are generally not componentwise, addition in $\mbf{W}_G(A)$ is 
componentwise on two Witt vectors with disjoint support.

\begin{theorem}$($c.f. \cite[Thm. 2.6]{Mil}$)$
\label{thmRS}
Let $\{R,S\}$ be a partition of $\mcal{F}(G)$, i.e., $R \cup S = \mathcal{F}(G)$ and $R \cap S = \emptyset$. 
For every ring $A$ and any $\mbf{a} \in \mbf{W}_G(A)$, define $\mbf{r}(\mbf{a})$ and 
$\mbf{s}(\mbf{a})$ to be the Witt vectors derived from $\mbf{a}$ with 
support in $R$ and $S$: 
\begin{displaymath}
\mbf{r}(\mbf{a}) = 
\begin{cases}
a_T & \text{if $T \in R,$} \\
0 & \textrm{if $T \in S$,}
\end{cases} \quad \text{ and } \quad 
\mbf{s}(\mbf{a}) = 
\begin{cases}
0 & \text{if $T \in R,$} \\
a_T & \textrm{if $T \in S$.}
\end{cases}
\end{displaymath} Then $\mbf{a} = \mbf{r}(\mbf{a}) + \mbf{s}(\mbf{a})$ in $\mbf{W}_G(A)$. 
\end{theorem}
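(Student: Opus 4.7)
The plan is to use the standard universal-polynomial-plus-functoriality technique that pervades Witt-vector arguments. Because the Witt addition polynomials $S_T \in \mathbb{Z}[\underline{X},\underline{Y}]$ have integer coefficients and $\mathbf{W}_G$ is a functor on commutative rings, it suffices to prove the identity for the universal Witt vector over $A_0 := \mathbb{Z}[\{X_T\}_{T \in \mathcal{F}(G)}]$, namely that
\[
(X_T)_{T \in \mathcal{F}(G)} \;=\; \mathbf{r}\bigl((X_T)_T\bigr) + \mathbf{s}\bigl((X_T)_T\bigr) \quad \text{in } \mathbf{W}_G(A_0).
\]
Once this is established, the general statement follows by pushing forward along the unique ring homomorphism $A_0 \to A$ sending each $X_T$ to $a_T$.

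Next I would pass to $A_0 \hookrightarrow \mathbb{Q}[\underline{X}]$. Since every $\varphi_T(T)$ is a positive integer and hence a non-zero-divisor in $\mathbb{Q}[\underline{X}]$, Theorem~\ref{invertthm} tells us the ghost map on $\mathbf{W}_G(\mathbb{Q}[\underline{X}])$ is injective. So it suffices to verify the identity after applying the ghost map $W$. By the defining property of the addition polynomials $S_T$, applying $W_T$ to the right-hand side yields
\[
W_T\bigl(\mathbf{r}((X_U)_U)\bigr) + W_T\bigl(\mathbf{s}((X_U)_U)\bigr).
\]

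Finally I would compute each piece directly from the definition of the Witt polynomial. Since $\mathbf{r}$ zeroes out the coordinates indexed by $S$ and keeps those indexed by $R$, and every exponent $\#T/\#U$ is a positive integer, the terms corresponding to $U \in S$ drop out, leaving
\[
W_T\bigl(\mathbf{r}((X_U)_U)\bigr) = \sum_{\substack{U \leq T \\ U \in R}} \varphi_T(U)\, X_U^{\#T/\#U},
\]
and symmetrically for $\mathbf{s}$. Adding the two sums and using that $\{R,S\}$ is a partition of $\mathcal{F}(G)$ gives $\sum_{U \leq T} \varphi_T(U) X_U^{\#T/\#U} = W_T((X_U)_U)$, as required.

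There is really no serious obstacle here; the argument is a direct unwinding of definitions once one has made the reduction to $\mathbb{Q}[\underline{X}]$. The only point requiring a moment's care is the observation that for each $U \leq T$, exactly one of $r_U, s_U$ equals $X_U$ and the other equals $0$, so that no cross terms appear — this is precisely where disjointness of supports is used, and it is also why the identity fails without that hypothesis.
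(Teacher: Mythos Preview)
Your proof is correct. The paper does not supply its own proof of this statement (it is cited from \cite[Thm.~2.6]{Mil}), but your argument is exactly the ``pass to a polynomial ring over $\mathbb{Z}$, check on ghost components, then invoke functoriality'' method that the paper itself describes as the standard technique for proving algebraic identities in $\mathbf{W}_G(A)$.
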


We also utilize the following theorem about the structure of the sum and product polynomials. 

\begin{theorem}$($c.f., \cite[Thm. 2.9]{Mil} $)$
\label{thm:integralwitt}
Give the ring $\mbf{Z}[\underline{X},\underline{Y}]$ the grading in which the degree of $X_U$ and $Y_U$ is $\# U$. 

\begin{enumerate}
\item[$(a)$] 
For all $T$, the polynomial $S_T$ is homogeneous of degree $\# T$ and $M_T$ is homogeneous of degree $2\#T$. 

\item[$(b)$]  For all $T$, $S_T(\underline{X},\mbf{0}) = X_T$, 
$S_T(\mbf{0},\underline{Y}) = Y_T$, 
$M_T(\underline{X},\mbf{0}) = 0$, and $M_T(\mbf{0},\underline{Y}) = 0$.
\end{enumerate}
\end{theorem}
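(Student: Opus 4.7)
My plan is to handle both parts by induction on the partial order $\leq$ on $\mcal{F}(G)$, starting from the observation that each Witt polynomial $W_T(\underline{X})$ is itself homogeneous of degree $\#T$ in the stated grading, since every monomial $X_U^{\#T/\#U}$ has weighted degree $\#U \cdot \#T/\#U = \#T$. Consequently $W_T(\underline{X}) + W_T(\underline{Y})$ is homogeneous of degree $\#T$ and $W_T(\underline{X})W_T(\underline{Y})$ is homogeneous of degree $2\#T$ in $\ZZ[\underline{X},\underline{Y}]$.

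For part (a), I will work in $\QQ[\underline{X},\underline{Y}]$, where $\varphi_T(T)$ is invertible, and isolate $S_T$ from (\ref{STF}):
$$\varphi_T(T)\, S_T \;=\; W_T(\underline{X}) + W_T(\underline{Y}) - \sum_{U < T} \varphi_T(U)\, S_U^{\#T/\#U}.$$
Inductively each $S_U$ with $U < T$ is homogeneous of degree $\#U$, so $S_U^{\#T/\#U}$ has weighted degree $\#T$; the whole right side is thus homogeneous of degree $\#T$, and dividing by the nonzero rational $\varphi_T(T)$ preserves that property. The base case $T = 0$ is immediate from Example \ref{xmp:MTST}. Because the Dress-Siebeneicher theorem places $S_T$ in $\ZZ[\underline{X},\underline{Y}]$, the $\QQ$-homogeneity descends verbatim to the integral polynomial ring. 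The argument for $M_T$ is identical: $W_T(\underline{X})W_T(\underline{Y})$ is homogeneous of degree $2\#T$ and, by induction, $M_U^{\#T/\#U}$ has degree $2\#U \cdot \#T/\#U = 2\#T$ for $U < T$.

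For part (b), I will appeal to the injectivity half of Theorem \ref{invertthm} applied to $A = \QQ[\underline{X}]$, which has no $\varphi_T(T)$-torsion. Substituting $\underline{Y} = \mbf{0}$ into (\ref{STF}) gives
$$\sum_{U \leq T} \varphi_T(U)\, S_U(\underline{X},\mbf{0})^{\#T/\#U} \;=\; W_T(\underline{X}),$$
a system visibly solved by the tuple $S_U(\underline{X},\mbf{0}) = X_U$. Injectivity of the ghost map forces $S_T(\underline{X},\mbf{0}) = X_T$ in $\QQ[\underline{X}]$, hence in $\ZZ[\underline{X}]$, and the symmetric identity $S_T(\mbf{0},\underline{Y}) = Y_T$ follows by swapping the roles of $\underline{X}$ and $\underline{Y}$. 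For multiplication, setting $\underline{Y} = \mbf{0}$ in (\ref{MTF}) reduces the right side to zero because $W_T(\mbf{0}) = 0$; the trivial solution $M_U(\underline{X},\mbf{0}) = 0$ then works, and the same uniqueness yields $M_T(\underline{X},\mbf{0}) = 0$.

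The main obstacle is really only bookkeeping: one has to verify that $S_T$ (respectively $M_T$) appears only in the linear term $\varphi_T(T) S_T$ (respectively $\varphi_T(T) M_T$) of the defining equation so that the induction isolates it cleanly, and that the torsion-freeness hypothesis of Theorem \ref{invertthm} applies uniformly to the polynomial rings used. Both are automatic, so the whole proof comes down to recording the weighted-homogeneity of $W_T$ and invoking uniqueness of Witt-polynomial solutions.
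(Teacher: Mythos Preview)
Your proof is correct. The paper does not give its own proof of this statement; it simply cites \cite[Thm.~2.9]{Mil} and records the result for later use, so there is nothing to compare against beyond noting that your argument is the standard one: weighted homogeneity of $W_T$ plus induction along $\leq$ for part~(a), and uniqueness from the injective ghost map over a torsion-free polynomial ring for part~(b).
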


Throughout we utilize three notions about $\ZZ_p^d$-sets for $d \geq 2$ some of which do not exist when $d = 1$. In particular, the algebraic results in \cite{Mil} depended on a careful study of both {\it cyclic} $\ZZ_p^d$-sets (\cite[Def. 3.1]{Mil}) which makes sense more generally for any profinite $G$, and the concepts {\it linked pairs of $\ZZ_p^d$} sets (\cite[Def. 3.3]{Mil}) and of the {\it level of a $\ZZ_p^d$} set (\cite[Def. 5.1]{Mil}) which both make sense for any $d \geq 2$. We denote by $\Lev(T)$ the level of the $\ZZ_p^d$-set $T$. While the statements of the theorems in this paper do not require these definitions, these notions will be freely utilized in the proofs and the reader is referred to the corresponding definitions and sections of \cite{Mil} for more elaboration.

\subsection{Prime ideals}\label{sec:PI}

A  key algebraic result about the rings $\W_{\ZZ_p^2}(k)$ when $k$ is a field of characteristic $p > 0$ is that it is reduced \cite[Thm. 5.17]{Mil}. As such, the intersection of all prime ideals is $\{ \mbf{0} \}$, so we have an embedding $\W_{\ZZ_p^2}(k) \hookrightarrow \prod_\fp \W_{\ZZ_p^2}(k)/\fp$ by reduction mod $\fp$ for all prime ideals $\fp$. The latter ring is a product of domains. What are the rings $\W_{\ZZ_p^2}(k)/\fp$? 
%For $G = \ZZ_p^d$ we can describe a natural collection of prime ideals in $\W_G(k)$. 

Let $f \colon \ZZ_p^d \onto \ZZ_p$ be a continuous surjective group homomorphism. We will see in more detail in Section \ref{sec:Top} (or see \cite[Sec. 2.9]{DS88}), $f$ induces a surjective ring homomorphism $\W_{\ZZ_p^d}(k) \onto \W_{\ZZ_p}(k)$ whose kernel is a prime ideal. We want to make this ring homomorphism explicit in terms of the frame of $\ZZ_p^d$, so we can visualize all these prime ideals. 

Pick a $\ZZ_p$-basis $\{e_1,\ldots,e_d\}$ of $\ZZ_p^d$ so that $\ZZ_p^d = \sum_{i=1}^d \ZZ_p e_i$ and for some $d' \leq d$, $\ker f = \sum_{i=1}^{d'} \ZZ_p p^{a_i} e_i$ where $a_0,\ldots,a_{d'} \geq 0$. Then $\ZZ_p^d/ \ker f = \sum_{i=1}^{d'} (\ZZ_p / p^{a_i}\ZZ_p) \overline{e_i} + \sum_{i = d'+1}^d \ZZ_p \overline{e_i}$. Since we know this quotient is isomorphic to $\ZZ_p$, $d' = d-1$ and $a_1, \ldots, a_{d-1} = 0$, so $f(x_1e_1 + \ldots + x_de_d ) = f(e_d)x_d$ where $f(e_d) \in \ZZ_p^\times$. After readjusting our basis, we can assume $f(e_d) = 1$ so $f$ is projection onto the $e_d$-coordinate. Set $N = \ker f$. The open subgroups of $\ZZ_p^d$ containing $N$ are $N + \ZZ_p p^r e_d$ for $ r \geq 0$, so the image of $\mcal{F}(\ZZ_p^d/N)$ in $\mcal{F}(\ZZ_p^d)$ is the collection of $\ZZ_p^d$-sets \begin{equation}\label{eq:prime} \ZZ_p^d/(N+\ZZ_p p^re_d) = \sum_{i=1}^d \ZZ_p e_i/(\sum_{i=1}^{d-1} \ZZ_p e_i + \ZZ_p p^r e_d),\end{equation} which form a chain of cyclic $\ZZ_p^d$-sets of level $0$ in the sense of \cite[Def. 5.1]{Mil}. So as $r$ varies, the $\ZZ_p^2$-sets in (\ref{eq:prime}) form a path in level $0$ in $\mcal{F}(\ZZ_p^2)$. See Figure~\ref{fig:Primeideal} for a pictorial description where the colored nodes indicate the support of an arbitrary element in the kernel of the projection $\W_{\ZZ_2^2}(k) \onto \W_{\ZZ_2}(k)$. 

\begin{figure}[htb]
\begin{tikzpicture}[inner sep=0pt, scale=0.25pt]
\tikzstyle{dot}=[fill=black,circle,minimum size=4pt]
\tikzstyle{rdot}=[fill=red,circle,minimum size=4pt]
\tikzstyle{bdot}=[fill=blue,circle,minimum size=4pt]
\tikzstyle{gdot}=[fill=darkgreen,circle,minimum size=4pt]
\tikzstyle{pdot}=[fill=magenta,circle,minimum size=4pt]

\node[pdot] (v0) at (0,0) {};

\node[dot] (v1) at (2,3.5) {};
\node[dot] (v2) at (2,0) {};
\node[pdot] (v3) at (2,-3.5) {};

\draw[-] (-3,5.5) -- (12,5.5);

\node[dot] (vL1) at (4,7) {};

\node[dot] (v4) at (4,4) {};
\node[dot] (v5) at (4,3) {};

\node[dot] (v6) at (4,0.5) {};
\node[dot] (v7) at (4,-0.5) {};

\node[pdot] (v8) at (4,-3) {};
\node[dot]  (v9) at (4,-4) {};

\node[dot] (v10) at (8,4.25) {};
\node[dot] (v11) at (8,5.25) {};

\node[dot] (v12) at (8,2.25) {};
\node[dot] (v13) at (8,3.25) {};

\node[dot] (v14) at (8,0.25) {};
\node[dot] (v15) at (8,1.25) {};

\node[dot] (v16) at (8,-0.25) {};
\node[dot] (v17) at (8,-1.25) {};

\node[dot] (v18) at (8,-2.25) {};
\node[pdot] (v19) at (8,-3.25) {};

\node[dot] (v20) at (8,-4.25) {};
\node[dot] (v21) at (8,-5.25) {};

\node[dot] (vL2) at (8,7) {};
\node[dot] (vL3) at (8,6) {};
\node[dot] (vL4) at (8,8) {};

\fill[black] (9,7) circle (0.15cm);
\fill[black] (9.5,7) circle (0.15cm);
\fill[black] (10,7) circle (0.15cm);

\fill[black] (9,0) circle (0.15cm);
\fill[black] (9.5,0) circle (0.15cm);
\fill[black] (10,0) circle (0.15cm);

\fill[black] (6,8) circle (0.15cm);
\fill[black] (6.5,8.5) circle (0.15cm);
\fill[black] (7,9) circle (0.15cm);

\draw (v0) -- (v1);
\draw (v0) -- (v2);
\draw[color=magenta] (v0) -- (v3);

\draw (v1) -- (v4);
\draw (v1) -- (v5);

\draw (v2) -- (v6);
\draw (v2) -- (v7);

\draw[color=magenta] (v3) -- (v8);
\draw (v3) -- (v9);

\draw (v4) -- (v10);
\draw (v4) -- (v11);

\draw (v5) -- (v12);
\draw (v5) -- (v13);

\draw (v6) -- (v14);
\draw (v6) -- (v15);

\draw (v7) -- (v16);
\draw (v7) -- (v17);

\draw (v8) -- (v18);
\draw [color=magenta](v8) -- (v19);

\draw (v9) -- (v20);
\draw (v9) -- (v21);

\draw (v1) -- (vL1);
\draw (v2) -- (vL1);
\draw (v3) -- (vL1);

\draw (vL1) -- (vL2);
\draw (vL1) -- (vL3);
\draw (vL1) -- (vL4);

%\path (0:0cm) node (v0) {$v_0$};
%
%\path (30:0.25cm) node (v1) {};
%\path (150:0.25cm) node (v2) {};
%\path (270:0.25cm) node (v3) {};
%
%
%\path (10:0.5cm) node (v4) {};
%\path (50:0.5cm) node (v5) {};
%\path (130:0.5cm) node (v6) {};
%\path (170:0.5cm) node (v7) {};
%\path (250:0.5cm) node (v8) {};
%\path (290:0.5cm) node (v9) {};
%
%\path (0:0.75cm) node (v10) {};
%\path (20:0.75cm) node (v11) {};
%
%
%\path (40:0.75cm) node (v12) {};
%\path (60:0.75cm) node (v13) {};
%
%\path (120:0.75cm) node (v14) {};
%\path (140:0.75cm) node (v15) {};
%
%\path (160:0.75cm) node (v16) {};
%\path (180:0.75cm) node (v17) {};
%
%\path (240:0.75cm) node (v18) {};
%\path (260:0.75cm) node (v19) {};
%
%\path (280:0.75cm) node (v20) {};
%\path (300:0.75cm) node (v21) {};

%\draw (v0) -- (v1)
%(v0) -- (v2)
%(v0) -- (v3)
%(v1) -- (v4)
%(v1) -- (v5)
%(v2) -- (v6)
%(v2) -- (v7)
%(v3) -- (v8) 
%(v3) -- (v9)
%(v4) -- (v10)
%(v4) -- (v11)
%(v5) -- (v12)
%(v5) -- (v13)
%(v6) -- (v14) 
%(v6) -- (v15)
%(v7) -- (v16)
%(v7) -- (v17)
%(v8) -- (v18)
%(v8) -- (v19)
%(v9) -- (v20)
%(v9) -- (v21);

%\node (LT) at (-7,1) {Log Terminal};
%\node (LC) at (-7,-1){Log Canonical};
%\node (R) at (-3,1){Rational};
%\node(DB) at (-3,-1){Du Bois};
% 
%
%\draw[->] (LT) edge[double] (LC);
%\draw[->] (LT) edge[double] (R);
%\draw[->] (R) edge[double] (DB);
%\draw[->] (LC) edge[double] (DB);

\end{tikzpicture}
\caption{Vectors which vanish on the colored nodes comprise the kernel of $\W_{\ZZ_2^2}(k) \onto \W_{\ZZ_2}(k)$ }
\label{fig:Primeideal}
\end{figure}

The ring homomorphism $\W_{\ZZ_p^d}(k) \onto \W_{\ZZ_p}(k)$ induced by $f$ is projection onto the coordinates indexed by (\ref{eq:prime}), so the kernel $\fp_f$ of this ring homomorphism is a prime of coheight one in $\W_{\ZZ_p^d}(k)$. All cyclic $\ZZ_p^d$-sets have the form $\sum_{i=1}^{d-1} \ZZ_p e_i+ \ZZ_p p^r e_d$ for some $\ZZ_p$-basis $\{e_1,\ldots,e_d\}$ and $r \geq 0$, so $$\bigcap_{f : \ZZ_p^d \onto \ZZ_p} \fp_f = \{ \mbf{a} \in \W_{\ZZ_p^d}(k) \colon a_T = 0 \mbox{ if } T \mbox{ is cyclic}\},$$ which is not $\{\mbf{0}\}$ since cyclic $\ZZ_p^d$-sets have level $0$; see Figure~\ref{fig:Z2Prime1}. The nilradical of $\W_{\ZZ_p^2}(k)$ is $\{\mbf{0}\}$ by \cite[Thm. 5. 17]{Mil}, therefore there must be additional non-maximal prime ideals in $\W_{\ZZ_p^2}(k)$ beyond those we have just constructed. The search for these extra prime ideals motivated Theorem~\ref{thm:diminfinite} which calculates the dimension of $\W_{\ZZ_p^d}(k)$ with $d \geq 2$. Also, when $d = 2$, all $\ZZ_p^2$-sets of level $0$ are cyclic and the $\ZZ_p^2$-sets of level $0$ (shaded portion of Figure~\ref{fig:Z2Prime1}) are in bijective correspondence with the tree of $\ZZ_p$-lattices in $\QQ_p^2$ up to scaling, gave us the perspective to prove Theorem~\ref{thm:isomorphism}. 

\begin{figure}[htb]
\begin{center}
\scalebox{0.6}{
\begin{tikzpicture}[smooth]

% Draw axes

\draw[-] (0,0) -- (1,0);
\draw[-] (0,0) -- (1,-1);
\draw[-] (0,0) -- (1,1);

\draw[-] (1,1) -- (3,1.25);
\draw[-] (1,1) -- (3,0.75);

\draw[-] (1,0) -- (3,0.25);
\draw[-] (1,0) -- (3,-0.25);

\draw[-] (1,-1) -- (3,-0.75);
\draw[-] (1,-1) -- (3,-1.25);

\draw[-] (-1,1.5) -- (6,1.5);

\draw[-] (1,1) -- (3,2.5);
\draw[-] (1,0) -- (3,2.5);
\draw[-] (1,-1) -- (3,2.5);

\draw[-] (3,2.5) -- (5,3);
\draw[-] (3,2.5) -- (5,2.5);
\draw[-] (3,2.5) -- (5,2);

\draw[-] (3,1.25) -- (5,1.35);
\draw[-] (3,1.25) -- (5,1.1);

\draw[-] (3,0.75) -- (5,0.85);
\draw[-] (3,0.75) -- (5,0.6);

\draw[-] (3,0.25) -- (5,0.35);
\draw[-] (3,0.25) -- (5,0.1);

\draw[-] (3,-0.25) -- (5,-0.15);
\draw[-] (3,-0.25) -- (5,-0.4);

\draw[-] (3,-0.75) -- (5,-0.65);
\draw[-] (3,-0.75) -- (5,-0.9);

\draw[-] (3,-1.25) -- (5,-1.2);
\draw[-] (3,-1.25) -- (5,-1.45);

\draw[-] (3,1.25) -- (5,3);
\draw[-] (3,0.75) -- (5,3);

\draw[-] (3,0.25) -- (5,2.5);
\draw[-] (3,-0.25) -- (5,2.5);

\draw[-] (3,-0.75) -- (5,2);
\draw[-] (3,-1.25) -- (5,2);

\fill[blue] (0,0) circle (0.1cm);

\fill[blue] (1,0) circle (0.1cm);
\fill[blue] (1,-1) circle (0.1cm);
\fill[blue] (1,1) circle (0.1cm);

\fill[blue] (3,1.25) circle (0.1cm);
\fill[blue] (3,0.75) circle (0.1cm);

\fill[blue] (3,0.25) circle (0.1cm);
\fill[blue] (3,-0.25) circle (0.1cm);

\fill[blue] (3,-0.75) circle (0.1cm);
\fill[blue] (3,-1.25) circle (0.1cm);

\fill[black] (3,2.5) circle (0.1cm);

\fill[black] (5,3) circle (0.1cm);
\fill[black] (5,2.5) circle (0.1cm);
\fill[black] (5,2) circle (0.1cm);

\fill[blue] (5,-1.2) circle (0.1cm);
\fill[blue] (5,-1.45) circle (0.1cm);

\fill[blue] (5,-0.65) circle (0.1cm);
\fill[blue] (5,-0.9) circle (0.1cm);

\fill[blue] (5,-0.15) circle (0.1cm);
\fill[blue] (5,-0.4) circle (0.1cm);

\fill[blue] (5,0.35) circle (0.1cm);
\fill[blue] (5,0.1) circle (0.1cm);

\fill[blue] (5,0.85) circle (0.1cm);
\fill[blue] (5,0.6) circle (0.1cm);

\fill[blue] (5,1.35) circle (0.1cm);
\fill[blue] (5,1.1) circle (0.1cm);

\fill[black] (7,0) circle (0.05cm);
\fill[black] (7.2,0) circle (0.05cm);
\fill[black] (7.4,0) circle (0.05cm);

\fill[black] (5.5,2.5) circle (0.05cm);
\fill[black] (5.7,2.5) circle (0.05cm);
\fill[black] (5.9,2.5) circle (0.05cm);

\fill[black] (4,3.25) circle (0.05cm);
\fill[black] (4.2,3.45) circle (0.05cm);
\fill[black] (4.4,3.65) circle (0.05cm);
%\fill[black] (5.7,2.5) circle (0.05cm);
%\fill[black] (5.9,2.5) circle (0.05cm);

\node at (-0.75,0) {$\ZZ_2^2/\ZZ_2^2$} {};
\node at (2,2.5) {$\ZZ_2^2/2 \ZZ_2^2$} {};

%\draw (5,3) circle (0.15cm);
%\draw (5,2.5) circle (0.15cm);
%
%\draw (5,-1.2) circle (0.15cm);
%\draw (5,-1.45) circle (0.15cm);
%
%\node at (5.4,-1.15) {$T_1$} {};
%\node at (5.4,-1.5) {$T_2$} {};
%
%\node at (5.4,3) {$V_1$} {};
%\node at (5.4,2.2) {$V_2$} {};

\end{tikzpicture}
}
\end{center}
\caption{Intersection of prime ideals for $G = \ZZ_2^2$}
\label{fig:Z2Prime1}
\end{figure}
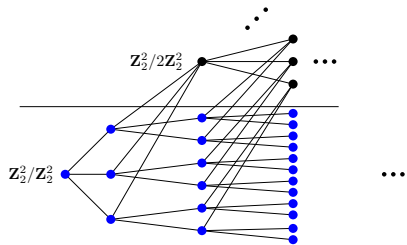

\section{Topology of Witt-Burnside rings}\label{sec:Top}

%We can consider any $G/N$-set $T$ as a $G$-set by defining the action of $g \in G$ on an element $x \in T$ to be $g \cdot x := gN \cdot x$.   A $G$-set arises in this way from a $G/N$-set precisely when 
%$N$ acts trivially on it. A $G/N$-set is transitive as a $G/N$-set if and only if it's transitive as a $G$-set since its $G/N$-orbits and $G$-orbits are the same.  Writing a transitive $G$-set as $G/H$, $N$ acts trivially on it if and only if $N \subset H$, in which 
%case we can write $G/H$ as $(G/N)/(H/N)$ to see it as a $G/N$-set. Two $G/N$-sets are isomorphic if and only if they are isomorphic when viewed as $G$-sets.  So turning $G/N$-sets into $G$-sets 
%gives us an embedding of $\mcal{F}(G/N)$ into $\mcal{F}(G)$ and the image of $\mcal{F}(G/N)$ in $\mcal{F}(G)$ is all $T \in \mcal{F}(G)$ on which $N$ acts trivially.

%For $T \in \mcal{F}(G)$ which is in the image of $\mcal{F}(G/N)$, write $T \cong G/H$ as a $G$-set with $N \subset H$.
%If $T' \leq T$ then we can write $T' \cong G/H'$ where $H \subset H'$, so $N \subset H'$. 
%Therefore the embedding of $\mcal{F}(G/N)$ into $\mcal{F}(G)$ is ``full'' as a partially ordered subset in the sense that
%all finite transitive $G$-sets below a $G$-set in the image are also in the image. 

In this section we describe three topologies on the Witt-Burnside ring $\W_G(A)$. {\it Unless otherwise stated, for this section, fix $A$ to be a commutative ring and $G$ is a profinite group. }

For any {\it{closed}} normal subgroup $N$ of $G$ the quotient $G/N$ is a profinite group and 
there is a natural group homomorphism from $G$ onto $G/N$. Identifying $\mcal{F}(G/N)$ with its image in $\mcal{F}(G)$, there is a natural projection map 
\begin{eqnarray*}
\tn{Proj}_{G/N}^G : \mbf{W}_G(A) &  \to & \mbf{W}_{G/N}(A) \\
\mbf{a} & \mapsto & 
(a_T)_{T \in \mcal{F}(G/N)}.
\end{eqnarray*} 
This is trivially surjective and is a ring homomorphism. 

\begin{example}
The ring homomorphism $\W_{\ZZ_p}(A) \rightarrow \W_{\ZZ_p/p^n\ZZ_p}(A)$ 
is the classical truncation homomorphism 
from $\W(A)$ onto the length $n+1$ Witt vectors $\W_{n+1}(A)$.
\end{example}

%If $G$ and $H$ are profinite groups and there is a continuous surjective group homomorphism $G \twoheadrightarrow H$ then there is a surjective ring homomorphism $\W_G(A) \twoheadrightarrow \W_H(A)$ since we can realize $H$ as some $G/N$ and the map $G \twoheadrightarrow H$ as reduction mod $N$. 

\begin{example}
For any $d \geq 2$ and field $k$ of characteristic $p$, $\W_{\ZZ_p}(k)$ is a homomorphic image of $\W_{\ZZ_p^d}(k)$ since $\ZZ_p$ is a continuous homomorphic image of $\ZZ_p^d$ (in many ways). Since $\W_{\ZZ_p}(k)$ is a domain, we obtain as kernels many non-maximal prime ideals of $\W_{\ZZ_p^d}(k)$. Unfortunately, this does not give all the not give us all the non-maximal prime ideals as discussed in Subsection~\ref{sec:PI}.
\end{example}

\begin{example}\label{homZpr}
For any $d \geq 2$, the ring $\W_{\ZZ_p^d}(A)$ has $\W_{\ZZ_p^2}(A)$ as a homomorphic image, since 
the group $\ZZ_p^d$ has $\ZZ_p^2$ as a continuous homomorphic image. 
\end{example}

\begin{definition}
\label{Kdef1}
For any closed normal subgroup $N \lhd G$, set 
\begin{eqnarray*}
K_N = \ker(\tn{Proj}_{G/N}^G) & = & \{\mbf{a} \in \W_G(A) : a_T = 0 \text{ for all } T \in \mcal{F}(G/N) \} \\
                              & = & \{\mbf{a} \in \W_G(A) : a_T = 0 \text{ when } N \text{ acts trivially on } T \}.
\end{eqnarray*}
\end{definition}

The kernel $K_N$ is an ideal of $\W_G(A)$ and the rings $\W_G(A)/K_N$ and $\W_{G/N}(A)$ are isomorphic. 
\begin{example}
Taking $N = G$, 
$K_G = \{\mbf{a} \in \W_G(A) : a_0 = 0\}$ and the quotient $\W_G(A)/K_G \cong W_0(A) = A$. 
If $A$ is a domain (field) then $K_G$ is a prime (maximal) ideal in $\W_G(A)$. 
\end{example}
%(To be more explicit in its dependence on the group and the ring, we could write
%$K_N$ as $K_N(G,A)$, but we will leave this dependence out of the notation unless it is 
%important to be explicit about it.) 
%If $N \subset N'$ and $a_T = 0$ whenever $N$ acts trivially on $T$ then 
%$a_T = 0$ whenever $N'$ acts trivially on $T$, so $K_N \subset K_{N'}$.   
For open normal subgroups $N$ and $N'$, 
the groups $G/N$ and $G/N'$ are finite. 
The rings $\mbf{W}_{G/N}(A)$ for open normal $N$ with the ring homomorphisms 
$$
\tn{Proj}_{G/N'}^{G/N} : \mbf{W}_{G/N}(A)  \to  \mbf{W}_{G/N'}(A) 
$$
when $N \subset N'$ 
form a projective system and it is isomorphic to the projective system of rings 
$\W_G(A)/K_N$ and natural ring homomorphisms 
$\W_G(A)/K_N \to  \W_G(A)/K_{N'}$ when $N \subset N'$. 

We have 
$\mbf{W}_G(A) \cong \varprojlim\limits_{N} \mbf{W}_{G/N}(A) \cong \varprojlim\limits_{N} \W_G(A)/K_N$ as rings, where the inverse limits are taken over open normal subgroups of $G$ ordered by reverse inclusion. This gives $\W_G(A)$ a topology as a closed subset of the product $\prod_{N} \W_{G/N}(A) \cong \prod_{N} \W_G(A)/K_N$, where $N$ runs over the open normal subgroups of $G$ and each factor $\W_{G/N}(A)$ and $\W_G(A)/K_N$ is given the discrete topology.   This topology on $\W_G(A)$ is called the {\it profinite topology}. The profinite topology on $\W_G(A)$ is Hausdorff since any subspace of a product of discrete spaces is Hausdorff. The ring $\W_G(A)$ is compact in its profinite topology if and only if $A$ is finite.  We note that $\W_G(A)$ is complete in the profinite topology which follows from the proof of \cite[Thm 3.3.2]{DS88}, see also \cite[pg. 357]{Ell06}.

%We remark that $\W_G(A) = \varprojlim_{N} \W_{G/N}(A)$ where the inverse limit runs over open normal subgroups of $G$ and, that giving each $\W_{G/N}(A)$ the discrete topology induces a natural profinite topology on $\W_G(A)$ in which $\W_G(A)$ is complete, which follows from the proof of \cite[Thm 3.3.2]{DS88}, see also \cite[pg. 357]{Ell06}. 

%%MAYBE CUT THIS PARAGRAPH
To say $\mbf{a}$ and $\mbf{b}$ are near each other in the profinite topology means $\mbf{a} \equiv \mbf{b} \bmod K_N$ for some ``small''
open normal subgroup $N$ in $G$; the larger $[G:N]$ is, the closer $\mbf{a}$ and $\mbf{b}$ are.
How can we tell in terms of coordinates of $\mbf{a}$ and $\mbf{b}$ that 
$\mbf{a} \equiv \mbf{b} \bmod K_N$? 
While $K_N$ is described by Definition \ref{Kdef1} for all closed $N$, when $N$ is open the set 
$G/N$ belongs to $\mcal{F}(G)$ and $\mcal{F}(G/N)$ is a ``full'' subset of $\mcal{F}(G)$, so we can describe $K_N$ a little differently than in Definition \ref{Kdef1}:
\begin{equation}\label{Kdef2}
K_N = \{\mbf{a} \in \W_G(A) : a_T = 0 \text{ for all } T \in \mcal{F}(G) \mbox{ such that } T \leq G/N\}. 
\end{equation}

\begin{lemma}
\label{lem:closetop}
For any open normal subgroup $N$of $G$, 
$$
\mbf{a} \equiv \mbf{b} \bmod K_N \Longleftrightarrow a_T = b_T \text{ for all } T \leq G/N.
$$
\end{lemma}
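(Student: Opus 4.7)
The plan is to treat this lemma as essentially a translation of the definition of $K_N$ via the projection homomorphism $\tn{Proj}_{G/N}^G$, avoiding any direct manipulation of the Witt polynomials.

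First I would unpack the congruence: $\mbf{a} \equiv \mbf{b} \bmod K_N$ means $\mbf{a} - \mbf{b} \in K_N$, and by the definition of $K_N$ this in turn is the statement $\tn{Proj}_{G/N}^G(\mbf{a} - \mbf{b}) = 0$. Because $\tn{Proj}_{G/N}^G$ is a ring homomorphism (as noted in the paragraph introducing it), this is equivalent to $\tn{Proj}_{G/N}^G(\mbf{a}) = \tn{Proj}_{G/N}^G(\mbf{b})$. Since the projection acts by sending a Witt vector $\mbf{c}$ to its tuple $(c_T)_{T \in \mcal{F}(G/N)}$, that equality is just the coordinate-wise condition $a_T = b_T$ for every $T \in \mcal{F}(G/N)$.

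The remaining step is the identification of the indexing sets: I must verify that for open normal $N$, the condition $T \in \mcal{F}(G/N)$ is the same as $T \leq G/N$ in $\mcal{F}(G)$. This is precisely the content of the reformulation (\ref{Kdef2}), so I would either cite it directly or derive it as follows. If $T \cong (G/N)/(H/N) = G/H$ with $N \subseteq H$, then the quotient $G \twoheadrightarrow G/H$ factors through $G/N$ and gives a $G$-map $G/N \to T$, so $T \leq G/N$. Conversely, $T = G/H \leq G/N$ means the stabilizer $N$ of a point of $G/N$ is contained in a conjugate $gHg^{-1}$ of the stabilizer of a point of $T$; normality of $N$ gives $N = g^{-1}Ng \subseteq H$, so $N$ acts trivially on $T$ and $T$ lies in $\mcal{F}(G/N)$.

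The main obstacle I would anticipate is purely notational, i.e., keeping straight the two equivalent descriptions of $K_N$ in Definition \ref{Kdef1} and equation (\ref{Kdef2}). Once the projection $\tn{Proj}_{G/N}^G$ is in hand as a ring homomorphism with kernel $K_N$ whose effect on a Witt vector is to forget all coordinates $T \not\leq G/N$, no work with the addition polynomials $S_T$ or any other Witt machinery is required; both directions of the lemma follow immediately.
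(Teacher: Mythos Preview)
Your proof is correct but takes a different route from the paper's. You argue entirely through the ring homomorphism $\tn{Proj}_{G/N}^G$: since $K_N$ is its kernel, the congruence $\mbf{a} \equiv \mbf{b} \bmod K_N$ is equivalent to equality of the projections, which unwinds to the coordinate condition on $\{T \leq G/N\}$. The paper instead works one direction at a time with lower-level Witt machinery: for ($\Leftarrow$) it invokes Theorem~\ref{thmRS} to split $\mbf{a}$ and $\mbf{b}$ along the partition of $\mcal{F}(G)$ into $\{T \leq G/N\}$ and its complement, observing that the difference lands in $K_N$; for ($\Rightarrow$) it writes $\mbf{a} = \mbf{b} + \mbf{c}$ with $\mbf{c} \in K_N$ and computes $a_T = S_T(\mbf{b}, \mbf{c}) = S_T(\mbf{b}, \mbf{0}) = b_T$ using Theorem~\ref{thm:integralwitt}(b) together with the fact that $S_T$ depends only on $Y_U$ for $U \leq T$. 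Your argument is cleaner and more conceptual, but it leans on the assertion (stated without proof in the paper) that $\tn{Proj}_{G/N}^G$ is a ring homomorphism, which itself rests on exactly the dependence structure of $S_T$ and $M_T$ that the paper's proof makes explicit. The two approaches are thus equivalent in substance; yours is a repackaging at a slightly higher level of abstraction.
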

\begin{proof}
Assume $a_T = b_T$ for $T \leq G/N$. Using Theorem~\ref{thmRS} with the partition 
$$\{T \leq G/N\} \cup (\mcal{F}(G) - \{T \leq G/N\})$$ 
we can write $\mbf{a} = \mbf{a}_1 + \mbf{a}_2$ where 
$\mbf{a}_1$ is supported on $T \leq G/N$ and $\mbf{a}_2$ is supported on 
the complement, so $\mbf{a}_2 \in K_N$. 
Similarly write $\mbf{b} = \mbf{b}_1 + \mbf{b}_2$. Then $\mbf{a}_1 = \mbf{b}_1$, so 
$\mbf{a}-\mbf{b} = \mbf{a}_2 - \mbf{b}_2 \in K_N$. 

Conversely, assume $\mbf{a} \equiv \mbf{b} \bmod K_N$ and write $\mbf{a} = \mbf{b} + \mbf{c}$ for $\mbf{c} \in K_N$. Since $S_0(\underline{X},\underline{Y}) = X_0 + Y_0$, we have 
$a_0 = b_0 + c_0$.  Since $c_0 = 0$, $a_0 = b_0$. If $G/N$ is trivial we are done. Otherwise, let $T$ be a nontrivial $G$-set with $T \leq G/N$. 
Since the polynomial $S_T(\underline{X},\underline{Y})$ only depends on $Y_U$ for $U \leq T$,
and $c_U = 0$ for all $U \leq T$,  
evaluating $S_T$ with  $Y_U = c_U$ for all $U$ or $Y_U = 0$ for all $U$ 
leads to the same result. Therefore $a_T = S_T(\mbf{b},\mbf{c}) = S_T(\mbf{b},\mbf{0}) = b_T$,  where the last equality follows from Theorem \ref{thm:integralwitt}(b). 
\end{proof}

\begin{theorem}
The ring $\W_G(A)$ is complete in the profinite topology.
\end{theorem}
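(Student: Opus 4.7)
The plan is to prove completeness directly from the identification $\W_G(A) \cong \varprojlim_N \W_G(A)/K_N$, where $N$ ranges over open normal subgroups of $G$ and each factor carries the discrete topology. Since a discrete topological ring is trivially complete (every Cauchy net is eventually constant), and an arbitrary product of complete Hausdorff topological groups is complete in the product topology, it suffices to observe that $\W_G(A)$ is realized as a \emph{closed} subspace of $\prod_N \W_G(A)/K_N$ via the natural map $\mbf{a} \mapsto (\mbf{a} + K_N)_N$. Closed subspaces of complete Hausdorff topological groups are complete, so this yields the result.

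Alternatively, and perhaps more transparently, I would argue by directly extracting a limit from a Cauchy net. Suppose $(\mbf{a}^{(i)})_{i \in I}$ is a Cauchy net in $\W_G(A)$. Given any $T \in \mcal{F}(G)$, choose an open normal subgroup $N_T \lhd G$ for which $T \leq G/N_T$; such an $N_T$ exists because $T$ is a finite $G$-set, so the intersection of the point-stabilizers of $T$ is an open normal subgroup acting trivially on $T$. Since $K_{N_T}$ is a basic neighborhood of $\mbf{0}$, the Cauchy condition implies that for some $i_T$, one has $\mbf{a}^{(i)} \equiv \mbf{a}^{(j)} \bmod K_{N_T}$ for all $i,j \geq i_T$. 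By Lemma \ref{lem:closetop}, this means $a^{(i)}_T$ is eventually constant in $i$; call its eventual value $a_T$, and define $\mbf{a} := (a_T)_{T \in \mcal{F}(G)} \in \W_G(A)$.

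To verify $\mbf{a}^{(i)} \to \mbf{a}$, fix an open normal $N \lhd G$. The set $\{T \in \mcal{F}(G) : T \leq G/N\}$ is finite, so there is a single index $i_0$ past which $a^{(i)}_T = a_T$ for every such $T$ simultaneously. Applying Lemma \ref{lem:closetop} once more gives $\mbf{a}^{(i)} \equiv \mbf{a} \bmod K_N$ for all $i \geq i_0$, which is the desired convergence.

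The main obstacle, such as it is, is purely bookkeeping: one must remember that the profinite topology need not be first countable (and hence Cauchy sequences are not generally sufficient), so the argument should be phrased in terms of Cauchy nets, and one must be careful that the chosen $N_T$ is open normal so that $K_{N_T}$ is actually a basic open neighborhood of $\mbf{0}$. Both points are handled by the observation that the stabilizer of any finite $G$-set is open, and finite intersections of open normal subgroups remain open normal.
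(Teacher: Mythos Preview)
Your proposal is correct, and your second (``more transparent'') argument is essentially identical to the paper's own proof: both extract the eventual value of each coordinate $a_T$ from the Cauchy net via Lemma~\ref{lem:closetop}, assemble these into $\mbf{a}$, and then use finiteness of $\{T : T \leq G/N\}$ together with Lemma~\ref{lem:closetop} again to verify convergence. Your first approach---reducing to the general fact that an inverse limit of discrete (hence complete) Hausdorff groups is closed in the product and therefore complete---is a valid and slightly more abstract alternative that the paper does not take, but the underlying content is the same once one unwinds why the image is closed.
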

\begin{proof}
It suffices to show any Cauchy net converges. 
Let $\{\mbf{a}_i\}_{i \in I}$ be a Cauchy net in $\W_G(A)$ for the profinite topology.  For each open normal subgroup $N \lhd G$ there is an $i_N \in I$ such that for all $i,j \geq i_N$, 
$\mbf{a}_i \equiv \mbf{a}_j \bmod K_N(G,A)$, which by Lemma \ref{lem:closetop} means $(\mbf{a}_i)_T = (\mbf{a}_j)_T$ for all $T \leq G/N$. For each $T$, pick $N$ so that $T \leq G/N$ and let 
$a_T$ be the common value of $(\mbf{a}_i)_T$ for all $i \geq i_N$. 
This is independent of the choice of $N$. 
Let $\mbf{a}$ be the Witt vector with $T$-th coordinate $a_T$ for all $T$. 
We will show this is the limit of the $\mbf{a}_i$'s. For an open normal subgroup $N$ of $G$ and $i \geq i_N$, $a_T = (\mbf{a}_i)_T$ for all $T \leq G/N$. By Lemma \ref{lem:closetop}, we have $\mbf{a} \equiv \mbf{a}_i \bmod K_N(G,A)$ for $i \geq i_N$. Thus $\lim \mbf{a}_i = \mbf{a}$ in the profinite topology.
\end{proof}

\subsection{\Teich elements}

\begin{definition}\label{TTeich}
For $a \in A$ and $T \in \mcal{F}(G)$, denote by $\omega_T(a) \in \W_G(A)$ the Witt vector with $T$-coordinate $a$ and all other coordinates $0$. We call $\omega_T(a)$ the $T$-th {\it{Teichm\"uller lift}} of $a$. 
\end{definition}

Denote the trivial $G$-set $G/G$ as $0$ and by $\omega_0(a)$ the $G/G$-th Teichm\"uller lift. The function $\omega_0 \colon A \rightarrow \W_G(A)$ generalizes the classical Teichm\"uller lift. Like the classical Teichm\"uller lift, $\omega_0$ is multiplicative; see \cite[Thm. 2.8]{Mil} and for a general formula for $\omega_T(a)\omega_{T'}(b)$, see \cite[p. 355]{Ell06}. These are important as they form a topological basis (c.f., Theorem~\ref{Rpart}) and in Section~\ref{sec:vdP} we identify them as an analogue of a van der Put basis in certain function rings.

\begin{remark}
If $G$ is a pro-$p$ group and $k$ is a perfect field (or just a perfect ring) of characteristic 
$p$ then we can write any $a_U \in k$ in the form $b_U^{\#U}$ for some $b_U \in k$, so 
$$
\mbf{a} = \lim_{T \in \mcal{F}(G)} \sum_{U \leq T} \omega_U \left(b_U^{\#U} \right) = 
\lim_{T \in \mcal{F}(G)} \sum_{U \leq T} \omega_0(b_U)\omega_U(1)
$$
by \cite[Thm. 2.8]{Mil}.  This last series is the usual $p$-adic expansion of a 
Witt vector when $G = \ZZ_p$: $\omega_U(1) = p^n$ when $U = \ZZ_p/p^n\ZZ_p$ and $\lim_{T \in \mcal{F}(\ZZ_p)} = \lim_{n \to \infty} $. 
\end{remark}

Theorem \ref{thmRS}, which extends from a two-element partition of $\mathcal{F}(G)$ to any finite partition, also extends with the profinite topology on $\W_G(A)$ to suitable infinite partitions of $\mathcal{F}(G)$.

\begin{theorem}\label{Rpart}
Let $\mcal{F}(G) = \bigcup_{i \in I} R_i$ be a partition where the index set $I$ is a directed set such that for all $j \in I$, the set $\{ i \in I : i \leq j \}$ is finite.  For any 
$\mbf{a} \in \W_G(A)$, let $\mbf{r}_i(\mbf{a})$ be the Witt vector in 
$\W_G(A)$ which is the part of $\mbf{a}$ supported on $R_i$:
$$
\mbf{r}_i(\mbf{a}) = 
\begin{cases}
a_T, & \text{ if } T \in R_i, \\
0, & \text{ otherwise}.
\end{cases}
$$
For any $\mbf{a} \in \W_G(A)$,
$$
\mbf{a} = \lim_{j \in I} \sum_{i \leq j} \mbf{r}_i(\mbf{a}),
$$ where the limit is taken in the profinite topology on $\W_G(A)$. In particular, 
$$
\mbf{a} = \lim_{T \in \mcal{F}(G)} \sum_{U \leq T} \omega_U(a_U). 
$$
\end{theorem}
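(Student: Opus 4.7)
The plan is to unwind the statement through Lemma~\ref{lem:closetop}: to show $\lim_{j \in I}\sum_{i \leq j} \mbf{r}_i(\mbf{a}) = \mbf{a}$ in the profinite topology, it suffices to show that for every open normal subgroup $N \lhd G$ there exists $j_0 \in I$ such that for all $j \geq j_0$ the Witt vector $\mbf{a} - \sum_{i \leq j} \mbf{r}_i(\mbf{a})$ lies in $K_N$, and by Lemma~\ref{lem:closetop} this is the same as agreement on all coordinates indexed by $T \leq G/N$.

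The first step is to pin down the coordinates of the finite sum $\sum_{i \leq j} \mbf{r}_i(\mbf{a})$. By construction the Witt vectors $\mbf{r}_i(\mbf{a})$ have pairwise disjoint supports (the $R_i$ partition $\mcal{F}(G)$), so an easy induction on Theorem~\ref{thmRS} — applied to the two-block partition $R_i \cup (\mcal{F}(G)\setminus R_i)$ and then iterated over the finitely many $i \leq j$ (the finiteness is precisely the hypothesis on the directed set $I$) — gives that $\sum_{i\leq j} \mbf{r}_i(\mbf{a})$ is exactly the Witt vector whose $T$-th coordinate equals $a_T$ for $T \in \bigcup_{i\leq j} R_i$ and vanishes elsewhere.

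The second step is to choose $j_0$. Fix an open normal $N \lhd G$. The set $\{T \in \mcal{F}(G) : T \leq G/N\}$ is finite (property (3) of the frame), so each such $T$ lies in a unique block $R_{i(T)}$ and the finite set $\{i(T) : T \leq G/N\}$ has an upper bound $j_0$ in $I$, using that $I$ is directed. For any $j \geq j_0$ we have $\{T \leq G/N\} \subseteq \bigcup_{i \leq j} R_i$, so the first step shows that $\mbf{a}$ and $\sum_{i \leq j} \mbf{r}_i(\mbf{a})$ have the same coordinate at every $T \leq G/N$. Lemma~\ref{lem:closetop} then yields $\mbf{a} \equiv \sum_{i \leq j} \mbf{r}_i(\mbf{a}) \pmod{K_N}$, completing the convergence proof.

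Finally, to deduce the second assertion I would specialize to the singleton partition $R_U = \{U\}$ indexed by $I = \mcal{F}(G)$ with its order $\leq$. The requirement on $I$ is satisfied: $\{U' : U' \leq U\}$ is finite by property (3), and $\mcal{F}(G)$ is directed because for $T = G/H_1$ and $T' = G/H_2$ the open subgroup $H_3 = H_1 \cap H_2$ gives a $G$-set $G/H_3$ with $T, T' \leq G/H_3$. In this partition $\mbf{r}_U(\mbf{a})$ is by definition $\omega_U(a_U)$, so the general statement collapses to $\mbf{a} = \lim_{T \in \mcal{F}(G)} \sum_{U \leq T} \omega_U(a_U)$. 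The only real point requiring care — and thus the main (mild) obstacle — is the careful bookkeeping that Theorem~\ref{thmRS} iterates to finite partitions without introducing any cross-terms in the other coordinates, which is exactly guaranteed by the disjoint-support hypothesis there.
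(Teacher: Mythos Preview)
Your proof is correct and follows essentially the same approach as the paper: both reduce via Lemma~\ref{lem:closetop} to coordinate agreement on $\{T \leq G/N\}$, use Theorem~\ref{thmRS} (iterated to finite partitions) to identify the coordinates of the partial sums, and choose the index $j_0$ as an upper bound in $I$ for the finitely many blocks meeting $\{T \leq G/N\}$. Your organization is slightly cleaner in that you compute the coordinates of $\sum_{i\leq j}\mbf{r}_i(\mbf{a})$ once and for all, whereas the paper splits $\mbf{s}_j$ into the $S_N$-part and a remainder, but the content is the same.
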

\begin{proof}
Consider the net $\{\mbf{s}_j\}_{j \in I}$ in $\W_G(A)$ defined by $\mbf{s}_j = \sum_{i \leq j} \mbf{r}_i(\mbf{a})$. Notice for each $j$ the sum $\sum_{i \leq j} \mbf{r}_i(\mbf{a})$ has finitely many terms by hypothesis. We aim to show $\{\mbf{s}_j\}_{j \in I}$ converges to $\mbf{a}$ in the profinite topology, that is to say for each open normal subgroup $N$ of $G$ there is $j_N \in I$ such that for all $j \geq j_N$, $\mbf{s}_j \equiv \mbf{a} \bmod K_N$. By Lemma \ref{lem:closetop} this means 
the $T$-th components of $\mbf{s}_j$ and $\mbf{a}$ agree for all $j \geq j_N$ 
and $T \leq G/N$.

For each $T \leq G/N$, there is a unique index $i_T \in I$ such that $T \in R_{i_T}$. Therefore $\mbf{r}_{i_T}(\mbf{a})_T = a_T$. Let $S_N = \{ i_T : T \leq G/N \}$. This is a finite set of indices in $I$. If $i \not\in S_N$ then $T \not\in R_i$ for all $T \leq G/N$, so $\mbf{r}_i(\mbf{a})_T = 0$ when $T \leq G/N$. Letting $R = I \setminus S_N$, $\mbf{a} = \sum_{i \in S_N} \mbf{r}_i(\mbf{a}) + \mbf{r}_R(\mbf{a})$ by an extension of Theorem \ref{thmRS} to finite partitions of $\mcal{F}(G)$. If $T \leq G/N$ then $i_T \notin R$, so $\mbf{r}_R(\mbf{a})_T = 0$. Therefore $\mbf{r}_R(\mbf{a}) \equiv 0 \bmod K_N$ so 

\begin{equation}
\label{eq:converge}
\mbf{a} \equiv \sum_{i \in S_N} \mbf{r}_i(\mbf{a}) \bmod K_N.
\end{equation}

Since $I$ is directed and $S_N$ is finite, there is an index $j_N \in I$ such that $j_N \geq i$ for all $i \in S_N$. For all $j \geq j_N$, $S_N \subset \{ i \in I : i \leq j \},$ so $$\mbf{s}_j = \sum_{i \leq j} \mbf{r}_i(\mbf{a}) = \sum_{i \in S_N} \mbf{r}_i(\mbf{a}) + \sum_{\stackrel{i \leq j}{i \notin S_N}} \mbf{r}_i(\mbf{a}).$$ Reducing mod $K_N$, the first summand is congruent to $\mbf{a} \bmod K_N$ by (\ref{eq:converge}) whereas the second summand is congruent to $0 \bmod K_N$. The last conclusion is clear using the partition of $\mcal{F}(G)$ into singleton subsets. 
\end{proof}

\subsection{Initial Vanishing Topology}\label{sec:CT}

In the profinite topology on $\W_G(A)$, an element 
$\mbf{a}$ is small if $a_T = 0$ for all $T$ on which 
a small open normal subgroup of $G$ (one of large index in $G$) acts trivially. 
There is another notion of being small: $\mbf{a}$ is small when $a_T = 0$ for 
all $T$ of size below a given bound. This topology is defined by another family of ideals.

\begin{definition}\label{Indef} $($c.f., \cite[Def. 2.12]{Mil}$)$ 
For $n \in \mbf{Z}^+$, set $$I_n(G,A) = \{ \mbf{a} \in \W_G(A) : a_T = 0 \mbox{ for }  \# T < n \}.$$
\end{definition}

%\begin{lemma}
%\label{lem:fil1}
%Each $I_n(G,A)$ is an ideal in $\mbf{W}_G(A)$. 
%\end{lemma}
%\begin{proof}
%Each addition polynomial $S_T$ depends only on variables indexed by (isomorphism classes of) finite transitive $G$-sets $U \leq T$ and 
%has no constant term by Theorem \ref{thm:integralwitt}. So if two Witt vectors are in $I_n(G,A)$ then their sum is also in $I_n(G,A)$. 
%
%It remains to show for any $\mbf{a} \in \W_G(A)$ and $\mbf{b} \in I_n(G,A)$ that $\mbf{a}\mbf{b} \in I_n(G,A)$. 
%Set $\mbf{c} = \mbf{a}\mbf{b}$.
%By the definition of multiplication in $\W_G(A)$, $c_T = M_T(\mbf{a},\mbf{b})$ for any $T$. 
%If $\# T < n$, $b_U = 0$ for $U \leq T$ by hypothesis.  Since 
%$M_T(\underline{X},\underline{Y})$ only depends on $Y_U$ for $U \leq T$, 
%$c_T = M_T(\mbf{a},\mbf{b}) = M_T(\mbf{a},\mbf{0})$ and 
%$M_T(\mbf{a},\mbf{0}) = 0$ by 
%Theorem \ref{thm:integralwitt}.
%\end{proof}

The sets $I_n(G,A)$ are in fact ideals \cite[Lem 2.13]{Mil}. These are the Witt vectors $\mbf{a}$ with support in $\{ T : \# T \geq n \}$. Obviously the ideals $I_n(G,A)$ are decreasing:
\begin{equation}\label{eq:filtration}
\W_G(A) = I_1(G,A) \supset I_2(G,A) \supset \cdots \supset I_n(G,A) \supset I_{n+1}(G,A) \supset \cdots.
\end{equation}
Since $\bigcap_{n \geq 1} I_n(G,A) = \{\mbf{0}\}$, we can put a Hausdorff topology on 
$\W_G(A)$ so that the collection of ideals $\{I_n(G,A)\}_{n \geq 1}$ form a fundamental system of neighborhoods of $\mbf{0}$: 
a nonempty subset $\mcal{O} \subset \W_G(A)$ is open in this topology if for each 
$\mbf{a} \in \mcal{O}$ the coset $\mbf{a} + I_n(G,A)$ is in $\mcal{O}$ for some $n$.
This is called the {\it initial vanishing topology}.

\begin{remark}
Of course it could happen that $I_n(G,A) = I_{n+1}(G,A)$, namely if there are no open subgroups of $G$ with index $n$. If $G$ is an infinite pro-$p$ group, then the different ideals in this filtration are indexed by $p$-powers, and ideals indexed by different powers of $p$ 
are different from each other since there are open subgroups of each $p$-power index in $G$: 
$$
I_{p^m}(G,A) = \{\mbf{a} : a_T = 0 \text{ for } \# T < p^{m}\} %= \{\mbf{a} : a_T = 0 \text{ for } \# T \leq p^{m-1}\}
$$ 
and 
$$
\W_G(A) = I_1(G,A) \supsetneqq I_p(G,A) \supsetneqq \cdots \supsetneqq I_{p^m}(G,A) \supsetneqq I_{p^{m+1}}(G,A) \supsetneqq \cdots.
$$
\end{remark}

We record a lemma which is analogue of Lemma \ref{lem:closetop} whose proof is similar and omitted before proceeding to prove that $\W_G(A)$ is also complete in the initial vanishing topology. 

\begin{lemma}
\label{lem:Itopcomplete}
For $n \geq 1$ and $\mbf{a}$ and $\mbf{b}$ in $\W_G(A)$, $\mbf{a} \equiv \mbf{b} \bmod I_n(G,A)$ if and only if $a_T = b_T$ for all $T \in \mcal{F}(G)$ such that $\# T < n$. 
\end{lemma}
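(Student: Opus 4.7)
The plan is to mirror the proof of Lemma \ref{lem:closetop} with the open subset $\{T \in \mcal{F}(G) : \#T < n\}$ of $\mcal{F}(G)$ playing the role of $\{T \leq G/N\}$. The crucial structural fact that makes this work is that this set is closed downward under the partial order: if $U \leq T$ and $\#T < n$, then $\#U \mid \#T$ (by the first bulleted fact after Remark~\ref{rmk:trees}), hence $\#U \leq \#T < n$.

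For the forward direction, suppose $\mbf{a} \equiv \mbf{b} \bmod I_n(G,A)$ and write $\mbf{a} = \mbf{b} + \mbf{c}$ with $\mbf{c} \in I_n(G,A)$, so $c_U = 0$ for every $U$ with $\#U < n$. Fix $T$ with $\#T < n$. Since $S_T(\underline{X},\underline{Y})$ only involves variables $Y_U$ indexed by $U \leq T$, and for every such $U$ we have $\#U < n$ and therefore $c_U = 0$, evaluating $S_T(\mbf{b},\mbf{c})$ agrees with $S_T(\mbf{b},\mbf{0})$. By Theorem~\ref{thm:integralwitt}(b), $S_T(\mbf{b},\mbf{0}) = b_T$, so $a_T = b_T$.

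For the converse, assume $a_T = b_T$ for every $T$ with $\#T < n$. Apply Theorem~\ref{thmRS} with the partition $\mcal{F}(G) = \{T : \#T < n\} \cup \{T : \#T \geq n\}$ to decompose $\mbf{a} = \mbf{a}_1 + \mbf{a}_2$ and $\mbf{b} = \mbf{b}_1 + \mbf{b}_2$ according to these two pieces of support. By construction $\mbf{a}_2, \mbf{b}_2 \in I_n(G,A)$, and the hypothesis gives $\mbf{a}_1 = \mbf{b}_1$, so $\mbf{a} - \mbf{b} = \mbf{a}_2 - \mbf{b}_2 \in I_n(G,A)$.

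There is no real obstacle here; the only point worth checking carefully is the downward-closedness of $\{T : \#T < n\}$ used in both directions, since the argument relies on $S_T$ depending only on coordinates below $T$ and on Theorem~\ref{thmRS} being applicable to this particular partition. Both requirements are immediate from the divisibility property $\#U \mid \#T$ when $U \leq T$.
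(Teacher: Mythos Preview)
Your proof is correct and follows exactly the approach the paper intends: the paper omits the proof of this lemma, stating only that it is analogous to Lemma~\ref{lem:closetop}, and your argument is precisely that analogue with $\{T : \#T < n\}$ replacing $\{T \leq G/N\}$. One small remark: the divisibility fact $\#U \mid \#T$ for $U \leq T$ is item~(1) in the enumerated list \emph{before} Remark~\ref{rmk:trees}, not after.
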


\begin{theorem}
\label{thm:Itopcomplete}
The ring $\W_G(A)$ is complete in the initial vanishing topology.
\end{theorem}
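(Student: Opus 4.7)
The plan is to mirror the completeness argument just given for the profinite topology, replacing the neighborhood basis $\{K_N\}$ (indexed by open normal subgroups $N \lhd G$) with the countable basis $\{I_n(G,A)\}_{n \geq 1}$, and invoking Lemma~\ref{lem:Itopcomplete} in place of Lemma~\ref{lem:closetop}. It suffices to show every Cauchy net in $\W_G(A)$ converges.

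First I would fix a Cauchy net $\{\mbf{a}_i\}_{i \in I}$ in $\W_G(A)$ for the initial vanishing topology. For each $n \geq 1$ there is an index $i_n \in I$ such that $\mbf{a}_i \equiv \mbf{a}_j \bmod I_n(G,A)$ whenever $i,j \geq i_n$. By Lemma~\ref{lem:Itopcomplete} this translates to the componentwise statement $(\mbf{a}_i)_T = (\mbf{a}_j)_T$ for every $T \in \mcal{F}(G)$ with $\# T < n$. Given any $T \in \mcal{F}(G)$, pick any $n > \# T$ and let $a_T$ be the common value of $(\mbf{a}_i)_T$ for $i \geq i_n$. Because the filtration $\W_G(A) = I_1(G,A) \supset I_2(G,A) \supset \cdots$ is nested, enlarging $n$ only refines the tail of the net on which the $T$-coordinate is constant, so $a_T$ does not depend on the choice of $n$; this is the one routine well-definedness point to verify.

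Finally, assemble the Witt vector $\mbf{a} = (a_T)_{T \in \mcal{F}(G)} \in \W_G(A)$ and check that $\mbf{a}_i \to \mbf{a}$. Given $n \geq 1$ and $i \geq i_n$, every coordinate $(\mbf{a}_i)_T$ with $\# T < n$ equals $a_T$ by construction, so Lemma~\ref{lem:Itopcomplete} yields $\mbf{a}_i \equiv \mbf{a} \bmod I_n(G,A)$. Hence $\mbf{a}$ is the limit of the net in the initial vanishing topology.

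There is no real obstacle here: once Lemma~\ref{lem:Itopcomplete} is in hand, the proof is a direct transcription of the profinite-topology completeness proof. In fact, since the $I_n(G,A)$ form a countable neighborhood basis at $\mbf{0}$, one could replace the Cauchy net by a Cauchy sequence throughout without loss, but keeping the argument in the language of nets matches the style of the preceding theorem.
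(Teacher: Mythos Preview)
Your proof is correct and matches the paper's argument essentially line for line: build the limit coordinatewise using Lemma~\ref{lem:Itopcomplete}, then verify convergence with the same lemma. The only cosmetic difference is that the paper immediately reduces to Cauchy sequences (exploiting the countable basis) whereas you work with nets and mention the reduction at the end.
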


\begin{proof}
Since the topology is defined by a countable set of ideals, 
it suffices to show any Cauchy sequence (rather than Cauchy net) converges. 
Let $\{\mbf{a}_i\}_{i \in \mbf{N}}$ be a Cauchy sequence in $\W_G(A)$ for the initial vanishing 
topology.  We will construct a candidate limit and then show it works. 

For each $n \geq 1$ there is an integer $N_n \in \ZZ^+$ such that for all $r,s \geq N_n$, 
$\mbf{a}_r \equiv \mbf{a}_s \bmod I_n(G,A)$, which by Lemma \ref{lem:Itopcomplete} means $(\mbf{a}_r)_T = (\mbf{a}_s)_T$ for all $T \in \mcal{F}(G)$ such that $\# T < n$. 
We may choose the numbers $N_n$ so that $N_1 < N_2 < N_3 < \cdots$. 

For each $T$, pick $n > \#T$ and let 
$a_T$ be the common value of $(\mbf{a}_r)_T$ for all $r \geq N_n$. 
This is independent of the choice of $n$. 
Let $\mbf{a}$ be the Witt vector with $T$-th coordinate $a_T$ for all $T$. 
We will show this is the limit of the $\mbf{a}_r$'s. 

For each $n \geq 1$ and $T \in \mcal{F}(G)$ with $\# T < n$, 
$a_T = (\mbf{a}_r)_T$ for $r \geq N_n$, so by Lemma \ref{lem:Itopcomplete} $\mbf{a} \equiv \mbf{a}_r \bmod I_n(G,A)$ for $r \geq N_n$. Thus $\lim_{r \to \infty} \mbf{a}_r = \mbf{a}$ in the 
initial vanishing topology.
\end{proof}

Set $I_n = I_n(G,A)$. 
How do the profinite and initial vanishing topologies, defined by the families of ideals $\{K_N\}_{N \text{ open}}$ and $\{I_n\}_{n \geq 1}$, compare with each other?

\begin{lemma}
\label{lem:Poincare}
Given finitely many finite transitive $G$-sets $T_1,\dots,T_m$,
there is an open normal subgroup $N$ in $G$ such that $G/N \geq T_i$ for $1 \leq i \leq m$.
\end{lemma}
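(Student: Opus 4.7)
The plan is to translate the statement into a statement about stabilizer subgroups and then use the standard fact that every open subgroup of a profinite group contains an open normal subgroup (its core).

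First I would use the concrete description of $\mcal{F}(G)$ recalled earlier in Section~\ref{sec:WB}: each $T_i$ is isomorphic to a coset space $G/H_i$ where $H_i$ is the (open) stabilizer of a chosen point of $T_i$. Under this description, the relation $G/N \geq T_i$ is the same as $T_i \leq G/N$, which by the concrete criterion for $\leq$ on coset spaces means $N$ is conjugate to a subgroup of $H_i$. Since we intend to choose $N$ normal, this reduces to simply producing an open normal $N \lhd G$ satisfying $N \subseteq H_i$ for $i = 1, \ldots, m$; then the map $gN \mapsto gH_i$ is a well-defined surjective $G$-map $G/N \to G/H_i \cong T_i$, giving the desired inequality.

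Next I would produce such an $N$. For each $i$, the subgroup $H_i$ is open, hence of finite index in $G$, hence contains an open normal subgroup of $G$: explicitly, one may take the core $N_i = \bigcap_{g \in G} gH_ig^{-1}$, which is a finite intersection of conjugates (since there are only $[G:H_i] < \infty$ of them) and therefore open, and is normal in $G$ by construction. Alternatively, because $G$ is profinite, it has a neighborhood basis of the identity consisting of open normal subgroups, and any such basis element that fits inside $H_i$ will do.

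Finally, I would set $N = \bigcap_{i=1}^{m} N_i$. A finite intersection of open normal subgroups is open and normal, and by construction $N \subseteq N_i \subseteq H_i$ for each $i$, so $G/N \geq T_i$ for all $i$ as required. There is no real obstacle here; the only point requiring any care is the unpacking of the definition of $\leq$ on $\mcal{F}(G)$ to see that the problem is equivalent to finding an open normal subgroup contained in finitely many prescribed open subgroups, and this is an immediate consequence of the profiniteness of $G$.
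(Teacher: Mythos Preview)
Your proof is correct. The paper's own argument is a two-sentence version of the same idea: it observes that $\mcal{F}(G)$ is a directed set, so the $T_i$ have a common upper bound $T$, and then that $G$-sets with normal stabilizers are cofinal in $\mcal{F}(G)$, so one may take $T = G/N$ with $N$ normal. Your explicit construction via cores $N_i = \bigcap_{g} gH_ig^{-1}$ and their intersection $N = \bigcap_i N_i$ is precisely what underlies those two abstract facts, so the approaches coincide; you have simply unpacked what the paper cites.
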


\begin{proof}
The frame $\mcal{F}(G)$ is a directed set, so there is a $T \in \mcal{F}(G)$ with $T \geq T_i$ for $i = 1,\ldots,m$. Since the $G$-sets with normal stabilizers are cofinal in $\mcal{F}(G)$, we can take $T = G/N$ with $N$ normal. 
\end{proof}

\begin{theorem}
\label{thm:sametop}
Every $K_N$, for open normal $N$, is open in the initial vanishing 
topology on $\W_G(A)$.  If $G$ is topologically finitely generated then 
the profinite topology and the initial vanishing topology on $\W_G(A)$ are the same.
\end{theorem}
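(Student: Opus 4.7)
The plan is to prove the two assertions in sequence by directly comparing the two filtrations of ideals.

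For the first assertion, I would note that $K_N$ is a subgroup of $\W_G(A)$, so showing it is a neighborhood of $\mbf{0}$ suffices; i.e., it is enough to exhibit some $I_n$ with $I_n \subseteq K_N$. The key observation is that any $T \in \mcal{F}(G)$ with $T \leq G/N$ satisfies $\# T \mid [G:N]$, hence $\# T \leq [G:N]$. Therefore, setting $n = [G:N] + 1$, any $\mbf{a} \in I_n$ has $a_T = 0$ for every $T$ with $\# T < n$, in particular for every $T \leq G/N$; this says $\mbf{a} \in K_N$. Hence $I_{[G:N]+1} \subseteq K_N$, and $K_N$ is open in the initial vanishing topology.

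From the first assertion it follows that the profinite topology is coarser than (contained in) the initial vanishing topology, since a neighborhood base of $\mbf{0}$ for the profinite topology consists of the ideals $K_N$, and each of these is open in the initial vanishing topology. For the reverse containment, I need to show that each $I_n$ is open in the profinite topology, i.e., for every $n \geq 1$ there exists an open normal subgroup $N \lhd G$ with $K_N \subseteq I_n$. Unwinding the definitions, $K_N \subseteq I_n$ is equivalent to the condition that every $T \in \mcal{F}(G)$ with $\#T < n$ satisfies $T \leq G/N$.

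The main obstacle, and where the hypothesis enters, is producing such a single $N$. Here I would invoke the classical fact that a topologically finitely generated profinite group has only finitely many open subgroups of each finite index (indeed, if $G$ has $d$ topological generators, then homomorphisms $G \to S_m$ are controlled by the images of the generators, so there are only finitely many open subgroups of index at most $m$). Since every finite transitive $G$-set $T$ of size $m$ has the form $G/H$ with $H$ an open subgroup of index $m$, the set $\{T \in \mcal{F}(G) : \#T < n\}$ is finite. Applying Lemma~\ref{lem:Poincare} to this finite collection yields an open normal subgroup $N \lhd G$ such that $G/N \geq T$ for every $T$ with $\#T < n$, giving $K_N \subseteq I_n$ as required. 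This completes the proof that the two topologies coincide.
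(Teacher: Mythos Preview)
Your proof is correct and follows essentially the same route as the paper's: for each $K_N$ you find $I_{[G:N]+1}\subseteq K_N$, and for the converse you use finiteness of $\{T:\#T<n\}$ (via the topologically-finitely-generated hypothesis) together with Lemma~\ref{lem:Poincare} to produce $N$ with $K_N\subseteq I_n$. The only cosmetic difference is that you supply a brief justification for the finiteness fact, whereas the paper cites \cite{RZ00}.
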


\begin{proof}
Let $N$ be an open normal subgroup of $G$.
Since $K_N$ is an additive group, to show it is 
open in the initial vanishing topology it suffices (and in fact is equivalent) to show there is an 
$n \geq 1$ such that 
$I_n \subset K_N$.  
Let $n$ be any integer exceeding $[G:N]$.  Then 
$T \leq G/N \Rightarrow \#T < n$, so 
for all $\mbf{a} \in \W_G(A)$ we have 
$$
a_T = 0 \text{ for } \# T < n \Longrightarrow a_T = 0 \text{ for } T \leq G/N. 
$$
Thus $I_n \subset K_N$.

Now assume $G$ is topologically finitely generated.
Since $I_n$ is an additive group, to show $I_n$ is open in the profinite topology on $\W_G(A)$ 
it suffices to find an open normal subgroup $N \lhd G$ such that 
$K_N \subset I_n$.  The key fact we use about a topologically finitely generated profinite group is that it 
has finitely many open subgroups of each index \cite[p. 45]{RZ00}, and thus 
$\{T : \#T < n\}$ is finite for all $n$.  Therefore Lemma \ref{lem:Poincare} tells us 
there is an open normal subgroup $N \lhd G$ such that
$\#T < n \Rightarrow T \leq G/N$, so $K_N \subset I_n$. 
\end{proof}

When $G$ is topologically finitely generated,
$\{T : \#T = n\}$ is finite for every $n$, so Theorem \ref{Rpart} implies 
$$
\mbf{a} =  \sum_{n \geq 1} \sum_{\#T = n} \omega_T(a_T) 
$$
for all $\mbf{a}$ in $\W_G(A)$, where we use the partition $\mcal{F}(G) = \bigcup_{n \geq 1} R_n$ for $R_n = \{ T : \# T = n\}$.  

\begin{remark}
For open normal subgroup $N$ in $G$, to say $K_N \subset I_n$ is the same as saying $\#T < n \Rightarrow T \leq G/N$.
Since $[G:N] < \infty$, there are only finitely many open subgroups of $G$ 
containing $N$, so if $K_N \subset I_n$ for some $N$ there can be only finitely many $T$ in $\mcal{F}(G)$  
with size less than $n$.  Considering this over all $n$,  
the profinite topology and the initial vanishing topology on $\W_G(A)$
are equal if and only if $G$ has finitely many conjugacy classes of open subgroups of 
each index.  
\end{remark}

It is natural to ask if the filtration \eqref{eq:filtration} is graded, i.e., $I_mI_n \subset I_{mn}$ for all $n, m \geq 1$. A partial result was shown when $G$ is any pro-$p$ group \cite[Lem 2.14]{Mil}. For pro-$p$ groups satisfying a mild technical condition (which is always true when the group is abelian) we check that the containment $I_{p^m}I_{p^n} \subset I_{p^{m+n}}$ holds in cases that are likely to be of most interest and in other cases it need not hold. 

\begin{definition}
\label{dff:ratio}
A pro-$p$ group $G$ is said to have the {\it ratio property} when 
the following holds: for all finite transitive $G$-sets $T$, $T_1$, and $T_2$ such that 
$T \geq T_1$, $T \geq T_2$, and $\#T < \#T_1\#T_2$, 
the ratio $\varphi_T(T_1)\varphi_T(T_2)/\varphi_T(T)$ is an integral multiple of $p$.
\end{definition}

If $T$ is a normal $G$-set, $\varphi_T(U) = \#U$ for any $U \leq T$, so the ratio 
$\varphi_T(T_1)\varphi_T(T_2)/\varphi_T(T)$ is $\#T_1\#T_2/\#T$, which  
is a power of $p$ greater than 1.  Therefore the ratio property 
is true when $G$ is a pro-$p$ abelian group. 

\begin{remark}Even though this property was only defined for pro-$p$ groups, one can define a more general ratio property for profinite groups by asking for all finite transitive $G$-sets $T$, $T_1$, and $T_2$ such that $T \geq T_1$ and $T \geq T_2$ that $\# T$ is a proper factor of $\# T_1 \# T_2$. It can be checked explicitly that it need not hold in general, for example it fails for symmetric groups $S_n$ with $n \geq 4$. The authors do not know any pro-$p$ groups for which the ratio property fails. 
%. The generalized ratio property holds for all dihedral groups of order up to $1200$, however it fails for the symmetric group $S_n$ for $n = 4, \ldots, 8$, for the alternating groups $A_n$ for $n = 5, \ldots, 9$, the groups $\GL_m(\ZZ/n\ZZ)$ for $m = 2$ and $n = 3,4,5,6,7$ and $m = 3$ and $n = 2,3$ and $m = 4$ and $n = 2$, as well as $\SL_m(\ZZ/n\ZZ)$ for $m = 2$ and $n = 3,4,5,6$ and $m = 3$ and $n = 2,3$. The authors do not know any pro-$p$ groups for which the ratio property fails. 
\end{remark}

The ratio property was found because of its role in the proof of Theorem \ref{thm:Fill} below.   
Passing from $G$ to $G/N$ where $N$ acts trivially on $T$, the ratio 
property for $T,T_1$ and $T_2$ can be viewed in $\mcal{F}(G/N)$ 
so it suffices to check Definition \ref{dff:ratio} for all finite
quotients of $G$. Therefore, the ratio property is really a hypothesis about finite $p$-groups. 

\begin{lemma}
\label{lem:Fill}
Let $G$ be a pro-$p$ group that satisfies the ratio property and $A$ be a ring of characteristic $p$. Let $V \in \mcal{F}(G)$ with $\# V \leq p^{m+n}$ for positive integers $m$ and $n$. Consider Witt vectors $\mbf{a}$ and $\mbf{b}$ in $\W_G(A)$ such that $a_U = 0$ for all $U < V$ such that $\# U < p^m$ and $b_U = 0$ for all $U < V$ such that $\# U < p^n$. Set $\mbf{c} = \mbf{ab}$. Then $c_V = 0$. 
\end{lemma}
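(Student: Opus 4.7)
The approach is to lift the problem to characteristic zero, isolate $\tilde{c}_V$ via the ghost identity, and combine an induction on $\#V$ with the ratio property. First I would pick a characteristic-zero ring $\tilde{A}$ with a surjection $\tilde{A} \twoheadrightarrow A$ having kernel $p\tilde{A}$, and lift $\mbf{a}, \mbf{b}$ to $\tilde{\mbf{a}}, \tilde{\mbf{b}} \in \W_G(\tilde{A})$ coordinatewise so that $\tilde{a}_U = 0$ whenever $a_U = 0$, and similarly for $\tilde{b}$. Since the Witt multiplication polynomials have integer coefficients, the product $\tilde{\mbf{c}} := \tilde{\mbf{a}}\tilde{\mbf{b}}$ reduces mod $p$ to $\mbf{c}$ by functoriality of $\W_G$, so it suffices to show $\tilde{c}_V \in p\tilde{A}$.

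In $\tilde{A}$, the identity $W_V(\tilde{\mbf{c}}) = W_V(\tilde{\mbf{a}})W_V(\tilde{\mbf{b}})$ rearranges to
\[
\varphi_V(V)\,\tilde{c}_V = \sum_{U_1,U_2 \le V} \varphi_V(U_1)\varphi_V(U_2)\,\tilde{a}_{U_1}^{\#V/\#U_1}\tilde{b}_{U_2}^{\#V/\#U_2} - \sum_{U<V} \varphi_V(U)\,\tilde{c}_U^{\#V/\#U},
\]
so the task becomes showing the right-hand side lies in $p\varphi_V(V)\tilde{A}$. A term in the cross sum survives only when $\tilde{a}_{U_1}, \tilde{b}_{U_2} \ne 0$, which by the choice of lift forces $U_1 = V$ or $\#U_1 \ge p^m$ and $U_2 = V$ or $\#U_2 \ge p^n$; hence $\#U_1\#U_2 \ge p^{m+n} \ge \#V$. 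When this inequality is strict, the ratio property (Definition~\ref{dff:ratio}) gives $p \mid \varphi_V(U_1)\varphi_V(U_2)/\varphi_V(V)$, placing the term in $p\varphi_V(V)\tilde{A}$.

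The second sum is handled by induction on $\#V$. Any $U < V$ satisfies $\#U < \#V \le p^{m+n}$ and the vanishing hypotheses restrict directly from $\{W : W < V\}$ to $\{W : W < U\}$ with the same $m, n$, so the inductive hypothesis yields $\tilde{c}_U \in p\tilde{A}$ and hence $\tilde{c}_U^{\#V/\#U} \in p^{\#V/\#U}\tilde{A}$. The Frobenius-style exponent $\#V/\#U \ge 2$ then supplies enough $p$-adic valuation to dominate any denominator coming from $\varphi_V(V)/\varphi_V(U)$, the latter itself controlled by a second application of the ratio property with $T_1 = V, T_2 = U$. The main obstacle is the boundary case in the cross sum where $\#U_1 = p^m$, $\#U_2 = p^n$, and $\#V = p^{m+n}$ simultaneously, since the strict inequality required by the ratio property just fails there; handling this delicate case must proceed either by using the constraint $U_1, U_2 < V$ together with the pro-$p$ structure of $G$ to rule it out, or by absorbing the remaining contribution against a matching piece of the inductive sum.
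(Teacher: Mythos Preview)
Your approach mirrors the paper's: lift to a $p$-torsion-free ring (the paper uses the universal polynomial ring $\ZZ[\underline{X},\underline{Y}]$ rather than an arbitrary lift), isolate the $V$-coordinate via the ghost identity $W_V(\mbf{c}) = W_V(\mbf{a})W_V(\mbf{b})$, dispatch the cross-sum with the ratio property, and treat the tail sum by induction on $\#T$ for $T \le V$. Two remarks.

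First, your worry about the boundary case $\#U_1 = p^m$, $\#U_2 = p^n$, $\#V = p^{m+n}$ is well founded, and neither of your proposed fixes can succeed, because the lemma as stated (with $\#V \le p^{m+n}$) is in fact false. Take $G = \ZZ_p$, $m = n = 1$, and $V = \ZZ_p/p^2\ZZ_p$; the hypotheses reduce to $a_0 = b_0 = 0$, but a direct computation of the second $p$-typical multiplication polynomial gives $c_V \equiv a_1^{p} b_1^{p}$ in characteristic $p$, which is generically nonzero. The paper's own proof glosses over precisely this point: it asserts ``$\#T_1\#T_2 \ge p^{m+n} > p^r = \#T$'' in the residual case, yet $p^r$ is allowed to equal $p^{m+n}$, so that strict inequality can fail. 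The correct hypothesis is $\#V < p^{m+n}$, which is all that Theorem~\ref{thm:Fill} actually uses; under it the boundary case never occurs and both your outline and the paper's argument go through cleanly.

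Second, your treatment of the inductive sum is too loose. Invoking the ratio property with $T_1 = V$, $T_2 = U$ only yields $p \mid \varphi_V(U)$ for nontrivial $U$; it does not by itself control the denominator $\varphi_V(V)$, so your ``enough $p$-adic valuation'' claim is not justified as written. The paper instead cites \cite[Lem.~2.11]{Mil}, the standard Dwork-type estimate guaranteeing that if $z_U \equiv 0 \pmod p$ for all $U < T$ then $\sum_{U<T} (\varphi_T(U)/\varphi_T(T))\, z_U^{\#T/\#U} \equiv 0 \pmod p$. You should invoke that lemma (or reprove it) rather than reach for the ratio property at this step.
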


\begin{proof}
This will follow from functoriality by proving the following mod $p$ congruence for particular Witt vectors over the ring 
$R = \mbf{Z}[\underline{X},\underline{Y}]$. 
Define $\mbf{x},\mbf{y} \in \W_G(R)$ by 
\begin{displaymath}
x_T = 
\begin{cases}
0, & \text{$\# T < p^m$ and $T < V$,} \\
X_T, & \textrm{otherwise,}
\end{cases}  \text{ and } 
y_T = 
\begin{cases}
0, & \text{$\# T < p^n$ and $T < V$,} \\
Y_T, & \textrm{otherwise.}
\end{cases}
\end{displaymath} 
Set $\mbf{z} = \mbf{x}\mbf{y}$.  We will show that 
\begin{equation}\label{Ucong}
T \leq V \Longrightarrow z_T \equiv 0 \bmod pR. 
\end{equation}

%Let's see how this implies the theorem.
%There is a ring homomorphism $f : R \to A$ such that $f(X_T) =  a_T$ and $f(Y_T) =  b_T$ for all $T \in \mcal{F}(G)$. 
%Then $\W_G(f)(\mbf{x}) = \mbf{a}$ and $\W_G(f)(\mbf{y}) = \mbf{b}$. 
%The product $\mbf{a}\mbf{b} = \W_G(f)(\mbf{z})$ has $T$-component 
%$f(z_T)$ and by (\ref{Ucong}) this lies in $f(pR) \subset pA = 0$ when $T \leq V$.

%Returning to the proof of (\ref{Ucong}), we argue by induction on $\#T$.
We argue by induction on $\#T$.
If  $\#T = 1$ then $T = 0$ and $z_0 = x_0y_0 = 0$. 
Let $p^r \leq p^{m+n}$ with $r \geq 1$ and 
assume by induction that for all $U < V$ such that $\# U < p^r$, $z_U \equiv 0 \bmod pR$. 
Pick $T \leq V$ with $\# T = p^r$.
Since the $T$-th Witt polynomial is a multiplicative function  
$W_T \colon \W_G(R) \rightarrow R$, 
$W_T(\mbf{z}) = W_T(\mbf{x})W_T(\mbf{y})$: 
$$\sum_{U \leq T} \varphi_T(U) z_U^{\# T / \#U } = \sum_{T_1,T_2 \leq T} \varphi_T(T_1)\varphi_T(T_2) x_{T_1}^{\#T / \# T_1} y_{T_2}^{\# T / \# T_2}.$$ Solving this equation for $z_T$ in $\QQ[\underline{X},\underline{Y}]$, 
\begin{equation}\label{ztf}
z_T = \sum_{T_1,T_2 \leq T} \frac{\varphi_T(T_1)\varphi_T(T_2)}{\varphi_T(T)} x_{T_1}^{\#T / \# T_1} y_{T_2}^{\# T / \# T_2} - \sum\limits_{U < T} \frac{\varphi_T(U)}{\varphi_T(T)} z_U^{\#T / \#U}.
\end{equation}

Since $z_U \in pR$ for $U < T$,
the second term in (\ref{ztf}) is $0$ mod $pR$ by \cite[Lem. 2.11]{Mil}. %Lemma \ref{lem:cong}.
In the first term  in (\ref{ztf}), 
if $T_2 \neq 0$ and $T_1 = V$  then $T_1 = T = V$ and $\varphi_T(T_1)\varphi_T(T_2)/\varphi_T(T) \equiv \varphi_T(T_2) \equiv 0 \bmod pR$ by \cite[Lem 2.10]{Mil}. %Lemma \ref{lem:propdivis} 
If $T_2 = 0$ then $y_{T_2} = 0$. A similar argument holds if $T_2 = V$ so we can assume $T_1 < V$ and $T_2 < V$. If either $\# T_1 < p^m$ or $\# T_2 < p^n$ then $x_{T_1} = 0$ or $y_{T_2} = 0$ respectively. The remaining terms in the first sum in (\ref{ztf}) have $T_1 < V$ with $\# T_1 \geq p^m$ and $T_2 < V$ with $\# T_2 \geq p^n$. In this case $\#T_1\#T_2 \geq p^{m+n} > p^r = \#T$, so by the ratio property  the coefficient in the first sum in (\ref{ztf}) is an integral multiple of $p$. Thus $z_T \equiv 0 \bmod pR$.     
\end{proof} 

\begin{theorem}
\label{thm:Fill}
Let $G$ be a pro-$p$ group.
If it satisfies the ratio property, 
then for all rings $A$ of characteristic $p$ and nonnegative integers $m$ and $n$, $$I_{p^m}(G,A) I_{p^n}(G,A) \subset I_{p^{m+n}}(G,A)$$ in $\W_G(A)$. In particular, this containment is true when $G$ is abelian. 
\end{theorem}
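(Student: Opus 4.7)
My plan is to deduce the theorem as a clean corollary of Lemma~\ref{lem:Fill}; essentially all of the work has been front-loaded into that lemma. First I would dispatch the degenerate cases $m=0$ or $n=0$: since $I_{p^0}(G,A) = I_1(G,A) = \W_G(A)$, the containment reduces to $\W_G(A)\cdot I_{p^n}(G,A) \subset I_{p^n}(G,A)$, which is automatic because $I_{p^n}(G,A)$ is an ideal (cf.\ \cite[Lem.~2.13]{Mil}).

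Assuming then $m,n \geq 1$, take arbitrary $\mbf{a} \in I_{p^m}(G,A)$ and $\mbf{b} \in I_{p^n}(G,A)$ and set $\mbf{c} = \mbf{a}\mbf{b}$; the goal is to show $\mbf{c} \in I_{p^{m+n}}(G,A)$, i.e., that $c_V = 0$ for every $V \in \mcal{F}(G)$ with $\#V < p^{m+n}$. For each such $V$, the hypothesis $\#V \leq p^{m+n}$ of Lemma~\ref{lem:Fill} is satisfied. Moreover, the global vanishing conditions $a_U = 0$ for $\#U < p^m$ and $b_U = 0$ for $\#U < p^n$ are strictly stronger than the $V$-restricted vanishing conditions the lemma requires, so Lemma~\ref{lem:Fill} applies and yields $c_V = 0$. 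Ranging over all admissible $V$ gives $\mbf{c} \in I_{p^{m+n}}(G,A)$, which is the desired containment.

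For the ``in particular'' clause I would simply invoke the remark preceding Lemma~\ref{lem:Fill}: when $T$ is a normal $G$-set, $\varphi_T(U) = \#U$ for every $U \leq T$, so the ratio $\varphi_T(T_1)\varphi_T(T_2)/\varphi_T(T)$ equals $\#T_1\#T_2/\#T$, which is a power of $p$ greater than $1$ whenever $\#T_1\#T_2 > \#T$. Since every finite transitive $G$-set of an abelian group has normal stabilizers, an abelian pro-$p$ group automatically satisfies the ratio property and the main conclusion applies. The main conceptual obstacle has already been overcome in Lemma~\ref{lem:Fill}, whose inductive unwinding of the recursive formula for $z_T$ used the ratio property precisely to knock out the mixed ``diagonal'' contributions with $T_1,T_2 < V$; here we are only reaping the payoff, and the only bookkeeping point is noticing that the lemma's $V$-localized hypotheses are automatic consequences of membership in the ideals $I_{p^m}(G,A)$ and $I_{p^n}(G,A)$.
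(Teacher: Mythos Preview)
Your proposal is correct and follows essentially the same approach as the paper: handle the trivial cases $m=0$ or $n=0$, then for $m,n\ge 1$ apply Lemma~\ref{lem:Fill} to each $V$ with $\#V<p^{m+n}$, noting that membership in $I_{p^m}$ and $I_{p^n}$ supplies the lemma's vanishing hypotheses. Your treatment of the ``in particular'' clause via the normality remark is also exactly what the paper intends.
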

\begin{proof}
If either $m$ or $n$ is $0$, the result follows since $I_1(G,A) = \W_G(A)$. So we can assume $m > 0$ and $n > 0$. Let $\mbf{a} \in I_{p^m}(G,A)$ and $\mbf{b} \in I_{p^n}(G,A)$ and set $\mbf{c} = \mbf{ab}$. Let $V \in \mcal{F}(G)$ such that $\# V < p^{m+n}$. Since $a_U = 0$ for all $\# U < p^m$ and $b_U = 0$ for all $\# U < p^n$, by Lemma \ref{lem:Fill} $c_V = 0$, so $\mbf{c} \in I_{p^{m+n}}$. 
\end{proof}

\begin{remark}
Of course the hypothesis that $A$ has characteristic $p$ is necessary. In $\W_{\ZZ_p}(\ZZ)$, let $\mbf{x} = (0,1,0,0,\ldots) = \omega_{\ZZ_p/p\ZZ_p}(1)$. Clearly $\mbf{x} \in I_p(\ZZ_p,\ZZ)$ however one can quickly check that $\mbf{x}^2 = (0,p,\ldots)$, which is not in $I_{p^2}(\ZZ_p,\ZZ)$. 
\end{remark}

There is a surjective homomorphism $\tn{Proj}_{G/G}^G : \mbf{W}_G(k) \to \mbf{W}_0(k) \cong k$. The kernel $\fm = \{ \mbf{a} \in \mbf{W}_G(k) : a_0 = 0 \}$ is a maximal ideal in $\W_G(k)$. When $G$ is pro-$p$ and $k$ has characteristic $p$ this maximal ideal is unique \cite[Thm. 2.16]{Mil}. We also have $\fm = I_\ell$ where $\ell$ is the least integer greater than $1$ such that $G$ has an open subgroup of index $\ell$ (such an $\ell$ exists unless $G$ is the trivial group).  Therefore, when $G$ is a pro-$p$ group and $k$ a field of characteristic $p$, $\fm = I_p(G,k)$. In this setting we have three topologies: the $\fm$-adic topology, the initial vanishing (or $\{I_{p^n}\}$-adic) topology, and the profinite topology. We have already compared the last two topologies in Theorem \ref{thm:sametop}. 
As a consequence of Theorem~\ref{thm:Fill} or by \cite[Lem. 2.14]{Mil}, %By Corollary \ref{cor:IFillpower}, 
\begin{equation}
\label{eq:maxpowercontain}
\fm^n = I_p(G,k)^n \subset I_{p^n}(G,k),
\end{equation}
so $\bigcap_{n \geq 1} \fm^n = \{ \mbf{0} \}$, which shows the $\fm$-adic topology on $\W_G(k)$ is Hausdorff. 
Since $\fm^n \subset I_{p}^n \subset K_N$ for $N$ any open normal subgroup such that $[G:N] > p^n$, open sets in the profinite topology are also open in the initial vanishing topology, which are in turn open in the $\fm$-adic topology. 
It is not so clear how the higher powers of $\fm$ are related to the ideals $I_{p^n}$ other than by (\ref{eq:maxpowercontain}). 
In the case of $\W_{\ZZ_p}(k)$ where $k$ is a perfect field of characteristic $p$, i.e., the classical Witt vectors, the initial vanishing topology is the same as the $\fm$-adic topology since $\fm = (p) = I_p$ and $\fm^n = I_{p^n}$.

Utilizing tools from \cite{Mil} we show that the $\fm$-adic topology is different from the other two for $G = \ZZ_p^d$ with $d \geq 2$. The proof utilize the concept of linked and cyclic $G$-sets which are defined in \cite[Def. 3.1, Def 3.3]{Mil}.

\begin{theorem}
\label{thm:diff_fil} Let $G = \ZZ_p^d$ with $d \geq 2$. For each $s \geq 2$, the ideal $\fm^s$ in $\W_{G}(k)$ does not contain any $I_{p^r}$.
\end{theorem}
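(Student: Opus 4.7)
The plan has three steps: reduce to $s=2$, then to $G=\ZZ_p^2$, and finally exhibit for each $r\geq 1$ a Witt vector in $I_{p^r}\setminus\fm^2$. Since $\fm^s\subset\fm^2$ for every $s\geq 2$, any containment $I_{p^r}\subset\fm^s$ would force $I_{p^r}\subset\fm^2$, so it suffices to prove $\fm^2\not\supset I_{p^r}$ for every $r$. Moreover, the surjection $\W_{\ZZ_p^d}(k)\twoheadrightarrow\W_{\ZZ_p^2}(k)$ from Example~\ref{homZpr} sends $I_{p^r}$ onto $I_{p^r}$ and $\fm^2$ onto $\fm^2$, so we may further assume $d=2$.

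For the base case $r=1$, choose any cyclic $\ZZ_p^2$-set $T$ of size $p$; its stabilizer is a maximal open subgroup, so $\{U\in\mcal{F}(\ZZ_p^2):U\leq T\}=\{0,T\}$. By Example~\ref{xmp:MTST}, for any $\mathbf{b},\mathbf{c}\in\fm$,
\[
M_T(\mathbf{b},\mathbf{c}) = b_0^{p}c_T + b_T c_0^{p} + \varphi_T(T)\,b_T c_T = p\,b_T c_T = 0\text{ in }k,
\]
since $\varphi_T(T)=\#T=p$ (the $\ZZ_p^2$-set $T$ is normal). Summing over finitely many such products preserves this vanishing, so $a_T=0$ for every $\mathbf{a}\in\fm^2$, whereas $(\omega_T(1))_T=1$, giving $\omega_T(1)\in I_p\setminus\fm^2$.

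For $r\geq 2$, the single-coordinate obstruction no longer works directly: for a cyclic $T'$ of size $p^r$ one computes from~(\ref{MTF}) that $M_{T'}(\mathbf{b},\mathbf{c})\equiv b_{U_1}^{p^{r-1}}c_{U_1}^{p^{r-1}}\pmod p$, with $U_1$ the cyclic $G$-set of size $p$ below $T'$, which need not vanish in characteristic $p$. To construct a witness at each level, I would exploit the tree structure of cyclic $\ZZ_p^2$-sets (Figure~\ref{fig:frame} and Remark~\ref{rmk:trees}): above each cyclic $G$-set of size $p^r$ there are exactly $p$ cyclic $G$-sets and one non-cyclic $G$-set of size $p^{r+1}$, and the non-cyclic one is the linking $G$-set of a linked pair in the sense of \cite[Def.~3.3]{Mil}. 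Computing the products $\omega_C(a)\omega_{C'}(b)$ for cyclic $C,C'$ of size $p$ via the Witt polynomial identity and expanding the result in the Teichm\"uller basis guaranteed by Theorem~\ref{Rpart} yields, passing to $\fm/\fm^2$, a linear relation among the classes $\overline{\omega_W(1)}$ indexed by the linked pairs involved. The plan is to show, level by level up the cyclic tree, that these relations (one per linked pair) cannot kill every Teichm\"uller class of a cyclic $G$-set of size $p^r$.

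The main obstacle is precisely this combinatorial accounting: checking that the number of independent relations in $\fm/\fm^2$ arriving at depth $r$ is strictly smaller than the number of Teichm\"uller classes $\overline{\omega_T(1)}$ with $\#T=p^r$ that they can touch, so that at every depth some survivor remains. This is where the concepts of linked and cyclic $\ZZ_p^2$-sets from \cite[Def.~3.1, Def.~3.3]{Mil} become essential: the ratio property (Definition~\ref{dff:ratio}), via Lemma~\ref{lem:Fill}, ensures that non-linked contributions to $M_V(\mathbf{b},\mathbf{c})$ are divisible by $p$ and hence vanish in $k$, so that only linked pairs produce relations in $\fm/\fm^2$. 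With the relations isolated in this way, a degree count on the $p$-regular tree of cyclic $\ZZ_p^2$-sets produces the desired surviving Teichm\"uller class $\omega_T(1)\in I_{p^r}\setminus\fm^2$ at every depth $r$.
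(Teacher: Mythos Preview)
Your reduction to $s=2$ matches the paper exactly, and your further reduction to $d=2$ via Example~\ref{homZpr} is valid (though the paper does not bother, since \cite[Lem.~3.4]{Mil} and \cite[Thm.~4.4]{Mil} apply for every $d\geq 2$). Your treatment of the case $r=1$ is correct.

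The gap is in your handling of $r\geq 2$. You correctly intuit that the witness should be a Teichm\"uller element $\omega_T(1)$ with $\#T=p^n$ for some $n\geq r$, and that linked pairs are the right structure to look at. But you then propose to establish $\omega_T(1)\notin\fm^2$ by a combinatorial count of relations in $\fm/\fm^2$ coming only from products $\omega_C(a)\omega_{C'}(b)$ with $\#C=\#C'=p$, and you acknowledge this accounting is the ``main obstacle.'' As stated this plan is incomplete: products of Teichm\"uller elements supported only at size $p$ do not span $\fm^2$ (you need products of Teichm\"ullers at all sizes), so a relation count over that restricted family would not decide whether a given $\overline{\omega_T(1)}$ vanishes in $\fm/\fm^2$. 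No version of the degree count is carried out.

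The paper's proof bypasses all of this: for any $n\geq r$, \cite[Lem.~3.4]{Mil} produces linked $G$-sets $T_n,T_n'$ of size $p^n$, and \cite[Thm.~4.4]{Mil} states directly that $\omega_{T_n}(1)\notin\fm^2$. Since $\omega_{T_n}(1)\in I_{p^n}\subset I_{p^r}$, this finishes the argument in one line. The fact you are trying to re-derive by counting is already available as \cite[Thm.~4.4]{Mil}; you should cite it.
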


\begin{proof}
Since $\fm^s \subset \fm^2$ for $s \geq 2$, it suffices to show 
$\fm^2$ contains no $I_{p^r}$ for $r \geq 1$.
For any $n \geq r$, there are linked $G$-sets $T_n$ and $T_n'$ of size $p^n$ by 
\cite[Lem. 3.4]{Mil}.  Then $\omega_{T_n}(1) \in I_{p^r}$ and 
by \cite[Thm. 4.4]{Mil} $\omega_{T_n}(1) \not\in \fm^2$, so $I_{p^r} \not\subset \fm^2$.
\end{proof}

\begin{corollary}\label{cor:TopNotSame}
For $G = \ZZ_p^d$ and $d \geq 2$ the initial vanishing topology and the topology defined by the maximal ideal of $\W_G(k)$ do not agree. 
\end{corollary}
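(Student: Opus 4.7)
The plan is to deduce this immediately from Theorem~\ref{thm:diff_fil}. Two linear topologies on a ring defined by descending chains of ideals agree if and only if the two families of ideals are cofinal in one another (each ideal in one family contains some ideal from the other). Here the two topologies on $\W_G(k)$ are determined by the families $\{\fm^s\}_{s \geq 1}$ and $\{I_{p^r}\}_{r \geq 0}$, so the task reduces to showing these families are \emph{not} mutually cofinal.

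One direction of cofinality is already established in the passage before Theorem~\ref{thm:diff_fil}: the containment $\fm^n \subset I_{p^n}$ from \eqref{eq:maxpowercontain} (via Theorem~\ref{thm:Fill}) shows that every $I_{p^r}$-neighborhood of $\mbf{0}$ contains an $\fm$-adic neighborhood, so the $\fm$-adic topology is at least as fine as the initial vanishing topology. Thus the topologies agree if and only if the reverse cofinality also holds, that is, if and only if for every $s \geq 1$ there exists $r \geq 0$ such that $I_{p^r} \subset \fm^s$.

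The plan is then simply to observe that Theorem~\ref{thm:diff_fil} rules this out for all $s \geq 2$: no $I_{p^r}$ is contained in $\fm^s$ for any $r$. Hence the two topologies cannot coincide, and the corollary follows. No obstacle remains beyond invoking the preceding theorem; all the genuine work (namely producing the linked $G$-sets $T_n$, $T_n'$ of size $p^n$ and verifying that their Teichm\"uller lifts $\omega_{T_n}(1)$ lie in every $I_{p^r}$ but outside $\fm^2$) has already been carried out in \cite[Lem.~3.4, Thm.~4.4]{Mil} and applied in the proof of Theorem~\ref{thm:diff_fil}.
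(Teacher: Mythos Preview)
Your proposal is correct and follows the paper's approach: the paper states the corollary immediately after Theorem~\ref{thm:diff_fil} without a separate proof, treating it as an immediate consequence. Your write-up simply makes explicit the standard cofinality criterion for equality of linear topologies that the paper leaves implicit.
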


%The following question remains open. 
%
%{\it Question:} Is $\W_{\ZZ_p^d}(k)$ for $d \geq 2$ complete with respect to the topology defined by its maximal ideal?

\section{Metrics for Witt-Burnside rings with $G = \ZZ_p^d$ and $d \geq 2$}\label{sec:norms}

%\begin{theorem}\label{thm:Fill}
%$I_{p^m}(G,k)I_{p^n}(G,k) \subset I_{p^{m+n}}(G,k)$
%\end{theorem}

Theorem~\ref{thm:Fill} suggests that tracking the largest $n$ for which a nonzero Witt vector 
$\mbf{a}$ lies in $I_{p^n}$ should behave like an algebra norm. We exploit this in particular for the case $G = \ZZ_p^2$ which we remark trivially satisfies Definition~\ref{dff:ratio} as it is abelian. Throughout assume that $k$ is a field of characteristic $p$. 

\begin{definition}
Let $G = \ZZ_p^d$. For nonzero $\mbf{a}$ in $\W_{G}(k)$, define its {\it initial vanishing norm} by 
$||\mbf{a}||_{\IV} = 1/p^n$ where 
$\mbf{a} \in I_{p^n}(G,k) - I_{p^{n+1}}(G,k)$.  
That is, $n \geq 0$ is maximal for the property that
$\mbf{a} \in I_{p^n}(G,k)$, or equivalently $\mbf{a}$ has a nonzero 
coordinate at some $G$-set of size $p^n$ and this $n$ is minimal. 
Set $||\mbf{0}||_{\IV} = 0$.
\end{definition}

Note $||\mbf{a}||_{\IV} \leq 1$ for all $\mbf{a}$, with equality exactly on the units in $\W_{\ZZ_p^d}(k)$. 
In the classical case $d=1$ and $k$ perfect, $||\cdot ||_{\IV}$ is the usual $p$-adic absolute value on 
Witt vectors and this identification is critical in Section~\ref{sec:FunRing}. We now restate Theorem~\ref{thm:Fill} in terms of norms.

\begin{theorem}
\label{thm:normmult}
For $\mbf{a}$ and $\mbf{b}$ in $\W_{\ZZ_p^d}(k)$, 
$$ ||\mbf{a}+\mbf{b}||_{\IV} \leq \max(||\mbf{a}||_{\IV},||\mbf{b}||_{\IV}) \hspace{0.3cm} \text{ and
} \hspace{0.3cm}
||\mbf{a}\mbf{b}||_{\IV} \leq ||\mbf{a}||_{\IV}||\mbf{b}||_{\IV}.$$
\end{theorem}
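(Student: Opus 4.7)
The plan is to deduce both inequalities directly from the filtration $\{I_{p^n}(G,k)\}$ already studied, with essentially no new work beyond translating the containments into norm language. The additive estimate is a routine consequence of the fact that each $I_{p^n}(G,k)$ is an additive subgroup and the ideals are nested, while the multiplicative estimate is a repackaging of Theorem~\ref{thm:Fill}.

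First I would verify the norm is well defined on nonzero Witt vectors. Since $\ZZ_p^d$ is pro-$p$, every finite transitive $G$-set has $p$-power order, so the set of sizes $\#T$ on the support of a nonzero $\mbf{a}$ is a nonempty subset of the powers of $p$. Taking $p^n$ to be its minimum gives $\mbf{a}\in I_{p^n}(G,k)\setminus I_{p^{n+1}}(G,k)$, so $\|\mbf{a}\|_{\IV}=p^{-n}$ is unambiguously assigned.

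For the ultrametric inequality I may assume $\mbf{a},\mbf{b}\neq\mbf{0}$ (the zero case is immediate from $\|\mbf{0}\|_{\IV}=0$). Writing $\|\mbf{a}\|_{\IV}=p^{-m}$ and $\|\mbf{b}\|_{\IV}=p^{-n}$, I assume without loss of generality $m\leq n$. Then $\mbf{b}\in I_{p^n}(G,k)\subset I_{p^m}(G,k)$ and $\mbf{a}\in I_{p^m}(G,k)$, and because $I_{p^m}(G,k)$ is an additive subgroup of $\W_G(k)$ we conclude $\mbf{a}+\mbf{b}\in I_{p^m}(G,k)$, so $\|\mbf{a}+\mbf{b}\|_{\IV}\leq p^{-m}=\max(\|\mbf{a}\|_{\IV},\|\mbf{b}\|_{\IV})$.

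For the multiplicative inequality I would invoke Theorem~\ref{thm:Fill}: $\ZZ_p^d$ is abelian, hence satisfies the ratio property (as remarked right after Definition~\ref{dff:ratio}), so the theorem yields $I_{p^m}(G,k)\cdot I_{p^n}(G,k)\subset I_{p^{m+n}}(G,k)$. Applying this with $m$ and $n$ as above gives $\mbf{a}\mbf{b}\in I_{p^{m+n}}(G,k)$, hence $\|\mbf{a}\mbf{b}\|_{\IV}\leq p^{-(m+n)}=\|\mbf{a}\|_{\IV}\|\mbf{b}\|_{\IV}$; again the case where a factor vanishes is trivial. There is no real obstacle: the heavy lifting was already done in Lemma~\ref{lem:Fill} and Theorem~\ref{thm:Fill}, and the present statement is essentially a definitional reformulation of those results, with only the mild bookkeeping of confirming that the norm is well defined on nonzero inputs.
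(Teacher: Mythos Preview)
Your proposal is correct and matches the paper's approach exactly: the paper states this theorem without proof, introducing it with ``We now restate Theorem~\ref{thm:Fill} in terms of norms,'' so the intended argument is precisely the translation you give---the ultrametric inequality from the $I_{p^n}$ being a nested chain of ideals (hence additive subgroups), and submultiplicativity from Theorem~\ref{thm:Fill}.
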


%\begin{proof}
%Both results are obvious if $\mbf{a}$ or $\mbf{b}$ is $\mbf{0}$, so we may 
%assume $\mbf{a}$ and $\mbf{b}$ are not $\mbf{0}$. 
%If $||\mbf{a}||_{\IV} = 1/p^m$ and 
%$||\mbf{b}||_{\IV} = 1/p^n$ where, without loss of generality, $m \leq n$, then 
%$I_{p^m}(\ZZ_p^d,k)$ contains both $\mbf{a}$ and $\mbf{b}$, so it 
%contains their sum. Therefore $||\mbf{a} + \mbf{b}||_{\IV} \leq 1/p^m = 
%\max(1/p^m,1/p^n)$. 
%
%
%The inequality $||\mbf{a}\mbf{b}||_{\IV} \leq ||\mbf{a}||_{\IV}||\mbf{b}||_{\IV}$ 
%is another way of saying Theorem \ref{thm:Fill}. 
%\end{proof}

\begin{remark}
The submultiplicativity of $||\cdot ||_{\IV}$ is the only place where we made serious use of $\ZZ_p^d$ being abelian, since that is how we know it satisfies the ratio property which is needed in Theorem \ref{thm:Fill}.  To the extent we can prove the ratio property for 
nonabelian pro-$p$ groups, we will get a norm on $\W_G(k)$ for more general pro-$p$ groups $G$.
\end{remark}

We now describe a precise `multiplication by $p$' formula in the case that $G$ is a pro-$p$ group. When applied to $G = \ZZ_p^d$ for $d =2$ gives us more information about the norm. We start with a lemma whose proof is routine and left to the reader. 

\begin{lemma}
\label{lem:divis1}
For any profinite group $G$, and $T,U \in \mcal{F}(G)$, $\varphi_T(T)$ divides $\varphi_T(U)\#T / \# U$. 
\end{lemma}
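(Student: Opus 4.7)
The plan is to reduce the divisibility to an orbit--stabilizer statement by exhibiting a finite set $S$ of cardinality $\varphi_T(U)\#T/\#U$ on which the group $\Aut_G(T)$ acts freely. Since any $G$-equivariant self-map of the finite transitive $G$-set $T$ has image a nonempty $G$-invariant subset (hence all of $T$) and is therefore bijective, we have $|\Aut_G(T)| = \varphi_T(T)$, so a free action would immediately give $\varphi_T(T) \mid |S|$. If $U \not\leq T$ then $\varphi_T(U) = 0$ and there is nothing to prove, so I would assume $U \leq T$.

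The candidate I would use is
\[
S = \{(t,f) \in T \times \Map_G(T,U) : f(t) = u_0\}
\]
for any fixed basepoint $u_0 \in U$. To compute $|S|$ I would invoke fact (1) from the summary of properties of $\mcal{F}(G)$ given early in the paper: the fibers of any $f \in \Map_G(T,U)$ all have common size $\#T/\#U$. Summing $|f^{-1}(u_0)|$ over the $\varphi_T(U)$ maps $f$ then gives $|S| = \varphi_T(U)\,\#T/\#U$.

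The obvious action of $\Aut_G(T)$ on $S$ to use is $\alpha \cdot (t,f) = (\alpha(t), f \circ \alpha^{-1})$, which lands in $S$ because $(f \circ \alpha^{-1})(\alpha(t)) = f(t) = u_0$. The step I expect to be the main point of the argument is verifying freeness: if $\alpha$ fixes $(t,f)$, then $\alpha(t) = t$, and by $G$-equivariance combined with transitivity of $G$ on $T$ one gets $\alpha(gt) = g\alpha(t) = gt$ for every $g \in G$, forcing $\alpha = \mathrm{id}_T$. Once freeness is in hand, the orbits of $\Aut_G(T)$ on $S$ all have size $\varphi_T(T)$, so $\varphi_T(T) \mid |S| = \varphi_T(U)\,\#T/\#U$, which is the claim. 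I do not expect any serious obstacle beyond getting the group action bookkeeping right, since everything else amounts to a single application of fact (1) and the freeness argument is short.
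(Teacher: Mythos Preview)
Your argument is correct. The paper does not actually prove this lemma---it declares the proof ``routine and left to the reader''---so there is nothing to compare against; your free-action/orbit-counting approach via the set $S=\{(t,f):f(t)=u_0\}$ is a clean way to supply the missing details, and the key freeness step (that $\Aut_G(T)$ acts freely on $T$, hence on $S$) is exactly as you describe.
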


\begin{theorem}
\label{thm:pxx}
Let $G$ be a pro-$p$ group and $R = \ZZ[\underline{X}]$. For $n \geq 0$, let $\mbf{x} \in \W_G(R)$ satisfy $x_T = 0$ when $\# T < p^n$ and $x_T = X_T$ otherwise. 
Set $\mbf{y} = p\mbf{x}$ in $\W_G(R)$. Then 
\begin{displaymath}
y_T =
\begin{cases}
0 & \text{ if $\# T < p^n,$} \\
pX_T & \text{if $\# T = p^n$,} \\
 \sum\limits_{\stackrel{U<T}{\# U = p^n}} \frac{p \varphi_T(U)}{\varphi_T(T)}(1-p^{p-1}) & \text{if $\# T = p^{n+1}$.  }\\
\end{cases}
\end{displaymath}
Reducing modulo $pR$ this gives 
\begin{displaymath}
y_T \equiv
\begin{cases}
0 \bmod pR & \text{ if $\# T \leq p^n,$} \\
\left(\sum\limits_{\stackrel{U<T}{\# U = p^n}} \frac{p\varphi_T(U)}{\varphi_T(T)}X_U\right)^p \bmod pR & \text{if $\# T = p^{n+1}$.  }\\
\end{cases}
\end{displaymath}
\end{theorem}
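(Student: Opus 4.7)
The plan is to exploit the injectivity of the ghost map $W\colon \W_G(R)\to \prod_T R$, which holds since $R=\ZZ[\underline{X}]$ is torsion-free (Theorem~\ref{invertthm}). Because $W$ is a ring homomorphism, $W_T(\mbf{y}) = pW_T(\mbf{x})$, and the hypothesis on $\mbf{x}$ gives
\[
W_T(\mbf{y}) \;=\; p \sum_{\substack{U \leq T \\ \# U \geq p^n}} \varphi_T(U)\, X_U^{\#T/\#U}.
\]
The coordinates $y_T$ are then recovered one $\mcal{F}(G)$-level at a time from the recursion
\[
\varphi_T(T)\, y_T \;=\; W_T(\mbf{y}) \;-\; \sum_{U<T} \varphi_T(U)\, y_U^{\#T/\#U}.
\]

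If $\#T < p^n$, every $U \leq T$ satisfies $\#U<p^n$, so $x_U = 0$ and thus $W_T(\mbf{y})=0$; an induction on $\#T$ together with torsion-freeness of $R$ forces $y_T = 0$. If $\#T = p^n$, the only surviving index in $W_T(\mbf{x})$ is $U=T$, giving $W_T(\mbf{y}) = p\varphi_T(T)X_T$, while the correction sum vanishes by the previous case; hence $y_T = pX_T$.

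For the main case $\#T = p^{n+1}$, the $U \leq T$ with $\#U \geq p^n$ are $T$ itself and those $U<T$ with $\#U = p^n$, so
\[
W_T(\mbf{y}) \;=\; p\varphi_T(T) X_T \;+\; p\!\!\!\sum_{\substack{U<T \\ \#U = p^n}}\!\!\!\varphi_T(U)\, X_U^p.
\]
From the earlier cases, $y_U = pX_U$ when $\#U = p^n$ and $y_U=0$ for smaller $U$, so the correction sum reduces to $p^p\sum_{\#U=p^n}\varphi_T(U)X_U^p$. Subtracting and dividing by $\varphi_T(T)$ produces the coefficient $p - p^p = p(1 - p^{p-1})$ on the $X_U^p$ terms, as claimed. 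The single nontrivial arithmetic input is Lemma~\ref{lem:divis1} applied with $\#T/\#U = p$, which guarantees that $p\varphi_T(U)/\varphi_T(T) \in \ZZ$ so the coefficients in the stated formula live in $R$ rather than merely in $R[1/\varphi_T(T)]$.

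The mod $pR$ reduction follows immediately. For $\#T \leq p^n$ each $y_T$ is either $0$ or $pX_T$, so $y_T \equiv 0 \bmod pR$. For $\#T = p^{n+1}$, the $pX_T$ term vanishes mod $p$ and $1 - p^{p-1} \equiv 1 \bmod p$, leaving $\sum c_U X_U^p$ with $c_U := p\varphi_T(U)/\varphi_T(T) \in \ZZ$. Fermat's little theorem gives $c_U \equiv c_U^p \bmod p$, and the Frobenius identity in characteristic $p$ then repackages the sum as $\sum (c_U X_U)^p \equiv \bigl(\sum c_U X_U\bigr)^p \bmod pR$. The only mildly subtle step in the whole argument is the integrality of the coefficients, secured by Lemma~\ref{lem:divis1}; everything else is routine induction and a Frobenius identity.
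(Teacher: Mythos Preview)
Your proof is correct and follows essentially the same route as the paper: both apply the $T$-th Witt polynomial to $\mbf{y}=p\mbf{x}$, solve the resulting recursion for $y_T$ in the three size ranges using torsion-freeness of $R$, invoke Lemma~\ref{lem:divis1} for the integrality of $p\varphi_T(U)/\varphi_T(T)$, and obtain the mod-$p$ statement via Frobenius. Your write-up is slightly more explicit than the paper's about the Fermat step $c_U\equiv c_U^p$ and about why $y_T=0$ (not merely $\equiv 0$) when $\#T<p^n$, but there is no substantive difference in method.
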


Note this theorem is not specifying $y_T$ for all $T$, but only for $\# T \leq p^{n+1}$. 

\begin{proof}
For any $T \in \mcal{F}(G)$, 
the equation $\mbf{y} = p\mbf{x}$ becomes, under the $T$th Witt polynomial, 
\begin{equation}\label{xyu}
\sum\limits_{U \leq T} \varphi_T(U) y_U^{\#T/\#U} = p \sum_{U \leq T} \varphi_T(U) x_U^{\#T/\#U}.
\end{equation}
Using induction one can show that $y_T \equiv 0 \bmod pR$ for all $T$ such that $\# T< p^n$ since $x_T = 0$ for all such $T$.
Let $\#T = p^n$. Since $x_U = y_U = 0$ for all $\# U < p^n$, (\ref{xyu}) becomes $\varphi_T(T) y_T = p \varphi_T(T) X_T$, which gives $y_T = p X_T$. 
Let $\# T = p^{n+1}$. In this case (\ref{xyu}) becomes
\begin{equation}
\label{eq:xyu2}
\sum\limits_{\stackrel{U < T}{\#U  = p^n}} \varphi_T(U) y_U^p + \varphi_T(T)y_T = 
p\sum\limits_{\stackrel{U < T}{\#U  = p^n}} \varphi_T(U) x_U^p + \varphi_T(T)x_T.
\end{equation}

We have already shown that $y_U = pX_U$ when $\# U = p^n$ and making this replacement in (\ref{eq:xyu2}) we have

$$
\sum\limits_{\stackrel{U < T}{\#U  = p^n}} p^p\varphi_T(U)X_U^p + \varphi_T(T)y_T = 
\sum\limits_{\stackrel{U < T}{\#U  = p^n}} p\varphi_T(U)X_U^p + p\varphi_T(T)X_T.
$$
By Lemma \ref{lem:divis1}, $\varphi_T(T) |  p\varphi_T(U)$ so $p\varphi_T(U)/\varphi_T(T) \in \ZZ$ and 
\begin{eqnarray*}
y_T & = & \sum_{\stackrel{U < T}{\#U = p^n}} \frac{p\varphi_T(U)}{\varphi_T(T)}(1-p^{p-1})X_U^p + pX_T \\
 &  \equiv & \left( \sum_{\stackrel{U < T}{\#U = p^n}} \frac{p\varphi_T(U)}{\varphi_T(T)}X_U \right)^p \bmod pR.
\end{eqnarray*}
\end{proof}

\begin{remark}
When $G$ is abelian, $\varphi_T(U) = \# U$ and $\varphi_T(T) = \# T$ so 

\noindent $p\varphi_T(U)/\varphi_T(T) = 1$ since $\# T = p^{n+1}$ and $\# U = p^n$. 
\end{remark}

\begin{theorem}\label{thm:corn}
Let $G = \ZZ_p^2$ and $k$ be a field of characteristic $p$. If $\mbf{a} \in I_{p^n}(G,k) - I_{p^{n+1}}(G,k)$ then 
$$p\mbf{a} \in I_{p^{n+1}}(G,k) - I_{p^{n+2}}(G,k).$$ 
\end{theorem}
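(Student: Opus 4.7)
The plan is to apply Theorem~\ref{thm:pxx} to obtain explicit formulas for the low-index coordinates of $p\mbf a$, reduce the strict non-containment $p\mbf a \notin I_{p^{n+2}}(G,k)$ to a combinatorial injectivity statement about the subgroup lattice of $\ZZ_p^2$, and prove the latter by induction on $n$.

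Because $G = \ZZ_p^2$ is abelian, $\varphi_T(U) = \#U$ and $\varphi_T(T) = \#T$, so $p\varphi_T(U)/\varphi_T(T) = 1$ whenever $\#T = p^{n+1}$ and $\#U = p^n$. Applying Theorem~\ref{thm:pxx} to $\mbf a \in I_{p^n}(G,k)$ in characteristic $p$ yields
\[
(p\mbf a)_T = 0 \text{ for } \#T \leq p^n, \qquad (p\mbf a)_T = \Big(\sum_{U < T,\,\#U = p^n} a_U\Big)^p \text{ for } \#T = p^{n+1}.
\]
The first identity (since $\#T$ is a power of $p$) gives $p\mbf a \in I_{p^{n+1}}(G,k)$. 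Since the $p$th-power map on $k$ is injective, the remaining assertion $p\mbf a \notin I_{p^{n+2}}(G,k)$ amounts to producing some $T \in \mcal F(G)$ with $\#T = p^{n+1}$ satisfying $\sum_{U<T,\,\#U=p^n} a_U \neq 0$. Identifying a transitive $G$-set $U$ of size $p^m$ with its stabilizer $H_U$ (a subgroup of index $p^m$, the collection of which I denote $\mcal H_m$), this reduces to showing that the $\FF_p$-linear incidence map $\tilde M_n \colon k^{\mcal H_n} \to k^{\mcal H_{n+1}}$ defined by $(\tilde M_n v)(K) = \sum_{H \supset K,\,[G:H]=p^n} v(H)$ is injective.

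I prove injectivity of $\tilde M_n$ by induction on $n$. For $n=0$, $|\mcal H_0|=1$ and $\tilde M_0$ is the diagonal map; for $n=1$, each $H \in \mcal H_1$ in a suitable basis has the form $\langle pe_1,e_2\rangle$, and the child $K := \langle p^2 e_1, e_2\rangle$ has $G/K \cong \ZZ/p^2$ cyclic, whose unique index-$p$ subgroup lifts to $H$, making $H$ the only parent of $K$ so $\tilde M_1$ has coordinate projections. For $n \geq 2$ and nonzero $\mbf v \in k^{\mcal H_n}$, two cases arise. If some $H_0$ in the support of $\mbf v$ has $G/H_0$ cyclic, take a basis with $H_0 = \langle p^n e_1, e_2\rangle$ and set $K := \langle p^{n+1} e_1, e_2\rangle$; then $G/K \cong \ZZ/p^{n+1}$ is cyclic, $H_0$ is the unique parent of $K$, and $(\tilde M_n \mbf v)(K) = v(H_0) \neq 0$. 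Otherwise every $H$ in the support satisfies $H \subset pG = p\ZZ_p^2$, and the group isomorphism $\phi \colon G \xrightarrow{\sim} pG$, $g \mapsto pg$, restricts to a bijection $\mcal H_{n-2} \to \{H \subset pG : [G:H] = p^n\}$ that preserves inclusions and relative indices. Setting $\tilde v(H') := v(\phi(H'))$ produces $\tilde{\mbf v} \in k^{\mcal H_{n-2}}$ with $\tilde{\mbf v} \neq 0$, so the inductive hypothesis furnishes $K' \in \mcal H_{n-1}$ with $(\tilde M_{n-2}\tilde{\mbf v})(K') \neq 0$. Taking $K := \phi(K') \in \mcal H_{n+1}$, the parents of $K$ in $\mcal H_n$ split into those contained in $pG$ (in $\phi$-bijection with the parents of $K'$ in $\mcal H_{n-2}$) and those not (contributing zero by the support hypothesis), so $(\tilde M_n \mbf v)(K) = (\tilde M_{n-2}\tilde{\mbf v})(K') \neq 0$.

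The main obstacle lies in the second case of the inductive step: recognizing that the hypothesis ``all nonzero level-$p^n$ coordinates correspond to subgroups contained in $pG$'' is exactly what allows the shift $\phi$ to produce a genuinely smaller subproblem with the same structure, so that the reduction from the $n$-th case to the $(n-2)$-th is mechanical. Once this compatibility between $\phi$ and subgroup incidence is established, the induction terminates at the base cases $n \in \{0,1\}$ already handled.
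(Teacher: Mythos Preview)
Your proof is correct. The paper's argument is organized differently: rather than an induction on $n$, it invokes the notion of \emph{level} of a $\ZZ_p^2$-set from \cite{Mil} and two structural lemmas there. Concretely, the paper picks $T_0$ of size $p^n$ with $a_{T_0}\neq 0$ and of \emph{minimal level}, then cites \cite[Lem.~5.8]{Mil} to produce a cover $T$ of $T_0$ with $\Lev(T)=\Lev(T_0)$ and \cite[Lem.~5.9]{Mil} to conclude that $T_0$ is the unique $U<T$ of size $p^n$ at that level; minimality of the level kills the remaining summands, so $(p\mbf a)_T=a_{T_0}^p\neq 0$ in one stroke. Your argument is the same idea unwound: the level of $G/H$ is precisely the largest $\ell$ with $H\subset p^\ell G$, so your dichotomy ``$G/H$ cyclic'' versus ``$H\subset pG$'' is exactly ``level $0$'' versus ``level $\geq 1$'', and your shift $\phi\colon G\xrightarrow{\sim}pG$ decreases level by one, turning the paper's one-shot minimal-level selection into an induction that bottoms out at the cyclic (level $0$) case. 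What you gain is self-containment---you never need to import the level machinery or the cited lemmas from \cite{Mil}---at the cost of a slightly longer argument; what the paper gains is brevity, given that machinery as a black box.
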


\begin{proof}
Since $k$ has characteristic $p$, 
in $\W_G(k)$ the 0-coordinate of $p$ is 0, so 
$p \in I_p(G,k)$.  Therefore we have 
$p\mbf{a} \in I_{p}(G,k)I_{p^n}(G,k) \subset I_{p^{n+1}}(G,k)$. 
We will find a nonzero coordinate of $p\mbf{a}$ which is 
indexed by a $G$-set of order $p^{n+1}$.

Since $\mbf{a} \in I_{p^n}(G,k)$, for some $T_0 \in \mcal{F}(G)$ with $\# T_0 = p^n$, $a_{T_0} \neq 0$. Choose such $T_0$ of minimal level. 
For any $T < T_0$, $\#T < p^n$, so $a_{T} = 0$. 
Therefore $\mbf{a}$ is a homomorphic image of $\mbf{x}$ from Theorem \ref{thm:pxx} using functoriality with a suitable 
homomorphism $\ZZ[\underline{X}] \rightarrow k$ and for any $T$ with size $p^{n+1}$, $$(p\mbf{a})_T = \left( \sum_{\stackrel{U<T}{\# U = p^n}} a_U \right)^p.$$  Which $T$ should we look at to find nonzero $a_T$? By \cite[Lem. 5.8]{Mil}, there are covers $T$ of $T_0$ with the same level as $T_0$. Recall, $\Lev(T)$ denotes the level of $T$. For any $T$ covering $T_0$ with $\Lev(T) = \Lev(T_0)$ and any $U < T$ with $\# U = p^n$ and $\Lev(U) < \Lev(T_0)$, $a_U = 0$. So 
$$
(p\mbf{a})_T = \left(\sum_{\stackrel{U < T}{ \stackrel{\#U = p^n}{\Lev(U) = \Lev(T_0)} }} a_U\right)^p.
$$
Since $G = \ZZ_p^2$, \cite[Lem. 5.9]{Mil} says $T_0$ is the only $G$-set below $T$ with the same level and of size $p^n$, hence $(p\mbf{a})_T = a_{T_0}^p \neq 0$. 
\end{proof}

We have a number of useful corollaries. 

\begin{corollary}
When $G = \ZZ_p^2$ and $k$ is a field of characteristic $p$, 
$p$ is not a zero-divisor in $\W_G(k)$. 
\end{corollary}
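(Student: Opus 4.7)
The plan is to deduce this corollary as an almost immediate consequence of Theorem~\ref{thm:corn}. The key observation is that Theorem~\ref{thm:corn} shows multiplication by $p$ shifts the initial vanishing norm down by a factor of exactly $p$ on nonzero vectors, so in particular it sends nonzero vectors to nonzero vectors.

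More precisely, I would argue by contrapositive. Suppose $\mbf{a} \in \W_G(k)$ is nonzero; we must show $p\mbf{a} \neq \mbf{0}$. Since $\bigcap_{n \geq 1} I_{p^n}(G,k) = \{\mbf{0}\}$ (from the filtration $\W_G(k) = I_1 \supset I_p \supset I_{p^2} \supset \cdots$ recorded in Section~\ref{sec:CT}), there is a largest $n \geq 0$ such that $\mbf{a} \in I_{p^n}(G,k)$, equivalently $\mbf{a} \in I_{p^n}(G,k) - I_{p^{n+1}}(G,k)$. Theorem~\ref{thm:corn} then gives $p\mbf{a} \in I_{p^{n+1}}(G,k) - I_{p^{n+2}}(G,k)$, so in particular $p\mbf{a} \notin I_{p^{n+2}}(G,k)$, whence $p\mbf{a} \neq \mbf{0}$.

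Equivalently, in the language of the initial vanishing norm $\|\cdot\|_{\IV}$ introduced just before the statement, Theorem~\ref{thm:corn} says $\|p\mbf{a}\|_{\IV} = \|\mbf{a}\|_{\IV}/p$ for nonzero $\mbf{a}$, and one reads off immediately that $\|p\mbf{a}\|_{\IV} = 0$ forces $\|\mbf{a}\|_{\IV} = 0$. There is essentially no obstacle here: all the real work was already done in Theorem~\ref{thm:corn}, whose proof used the level-theoretic structure of $\mcal{F}(\ZZ_p^2)$ (specifically \cite[Lem. 5.8, 5.9]{Mil}) to locate a genuinely nonzero coordinate of $p\mbf{a}$ of size $p^{n+1}$. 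This corollary is just the packaging of that conclusion.
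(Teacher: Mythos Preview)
Your proof is correct and follows essentially the same approach as the paper: both argue that any nonzero $\mbf{a}$ lies in some $I_{p^n}(G,k) - I_{p^{n+1}}(G,k)$ (since $\bigcap_n I_{p^n} = \{\mbf{0}\}$), and then invoke Theorem~\ref{thm:corn} to conclude $p\mbf{a} \neq \mbf{0}$. Your added remark translating this into the language of $\|\cdot\|_{\IV}$ is also consistent with the paper's later Theorem~\ref{thm:cpt}.
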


\begin{proof}
Any nonzero element of $\W_G(k)$ belongs to 
some $I_{p^n}(G,k) - I_{p^{n+1}}(G,k)$, and 
Theorem \ref{thm:corn} shows its product with $p$ is nonzero. 
\end{proof}

\begin{corollary}
\label{cor:pmultsup}
Let $G = \ZZ_p^2$ and $\mbf{x} \neq \mbf{0}$ in $\W_G(k)$. The support of $p\mbf{x}$ contains more than one $G$-set. 
\end{corollary}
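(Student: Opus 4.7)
The strategy is to refine the counting in the proof of Theorem~\ref{thm:corn}. That theorem already exhibits a single $G$-set $T$ in the support of $p\mbf{x}$ via the formula $(p\mbf{x})_T = x_{T_0}^p$ applied to a cover $T$ of a carefully chosen $T_0$; the claim here is essentially that this same formula produces \emph{several} such $T$, because in $\mcal{F}(\ZZ_p^2)$ any $G$-set has at least $p$ covers at the same level.

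In detail, the plan is as follows. Since $\mbf{x} \neq \mbf{0}$ and $\bigcap_{n \geq 1} I_{p^n}(G,k) = \{\mbf{0}\}$, there is a unique $n \geq 0$ with $\mbf{x} \in I_{p^n}(G,k) \setminus I_{p^{n+1}}(G,k)$. Following the proof of Theorem~\ref{thm:corn}, I would pick $T_0 \in \mcal{F}(G)$ of size $p^n$ with $x_{T_0} \neq 0$ and $\Lev(T_0)$ as small as possible. That argument, which combines the formula in Theorem~\ref{thm:pxx} with \cite[Lem.~5.8 and~5.9]{Mil}, gives
\[
  (p\mbf{x})_T = x_{T_0}^p \neq 0
\]
for \emph{every} cover $T$ of $T_0$ with $\Lev(T) = \Lev(T_0)$. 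So the corollary reduces to showing that such a $T_0$ always has at least two covers at the same level.

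For that cover-counting step, write $T_0 = \ZZ_p^2/H_0$ with $r = \Lev(T_0)$. The $p+1$ covers of $T_0$ correspond to the $p+1$ index-$p$ subgroups of $H_0$, and a cover $T = \ZZ_p^2/H$ has $\Lev(T) > r$ precisely when $H \subset p^{r+1}\ZZ_p^2$, i.e., $H \subset H_0 \cap p^{r+1}\ZZ_p^2$. Using an invariant-factor decomposition $H_0 = p^{e_1}u_1\ZZ_p + p^{e_2}u_2\ZZ_p$ with $r = e_1 \leq e_2$, a direct computation shows that $[H_0 : H_0 \cap p^{r+1}\ZZ_p^2]$ equals $p$ if $e_1 < e_2$ and $p^2$ if $e_1 = e_2 = r$. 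In either case, at most one of the $p+1$ index-$p$ subgroups of $H_0$ is contained in $p^{r+1}\ZZ_p^2$, so at least $p \geq 2$ of the covers of $T_0$ share its level. Applying the formula above at each of these covers shows that the support of $p\mbf{x}$ contains at least $p$ distinct $G$-sets.

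The main obstacle is the cover-counting lemma: one must see that the rank-two structure of $\ZZ_p^2$ forces multiple covers at each level. This is the only place where the hypothesis $G = \ZZ_p^2$ (as opposed to $\ZZ_p$) genuinely enters; once it is in hand, the conclusion follows directly from the formula already used in Theorem~\ref{thm:corn}.
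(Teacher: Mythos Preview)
Your argument is correct and follows essentially the same route as the paper: pick $T_0$ of minimal size and minimal level in the support of $\mbf{x}$, use the formula from Theorem~\ref{thm:corn} to get $(p\mbf{x})_T = x_{T_0}^p \neq 0$ for every same-level cover $T$ of $T_0$, and then observe that there are at least two such covers. The only difference is that the paper dispatches the cover-counting step by citing \cite[Thm.~5.4]{Mil}, whereas you supply a self-contained invariant-factor computation showing that at most one of the $p+1$ covers can jump to a higher level; your version is more explicit but otherwise equivalent.
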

\begin{proof}
Set $\mbf{y} = p \mbf{x}$. By \cite[Lem. 5.10]{Mil} there is a $T_0$ such that $T_0$ has minimal size among the nonzero coordinates of $\mbf{x}$. Choose such $T_0$ with minimal level. From the proof of Theorem \ref{thm:corn}, for any $T$ covering $T_0$ with $\Lev(T) = \Lev(T_0)$, $y_T \neq 0$. 
From \cite[Thm. 5.4]{Mil} there is more than one such $T$. 
\end{proof}

Maybe the most important consequence for this article is the following. 

\begin{corollary}
When $G = \ZZ_p^2$, 
the initial vanishing topology on $\W_G(k)$ induces on 
$\ZZ$ its $p$-adic topology.
\end{corollary}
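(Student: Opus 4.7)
The plan is to show that the restriction of the initial vanishing norm $\|\cdot\|_{\IV}$ to the image of $\ZZ$ in $\W_G(k)$ coincides exactly with the $p$-adic absolute value $|\cdot|_p$. Once this is established, the induced topology on $\ZZ$ must be the $p$-adic topology, since for each $r \geq 0$ the basic open set $\{n \in \ZZ : n \cdot 1 \in I_{p^r}(G,k)\}$ coincides with $p^r\ZZ$.

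First I would handle the prime-to-$p$ part. Given any integer $n$ with $\gcd(n,p) = 1$, the element $n \cdot 1 \in \W_G(k)$ has $0$-coordinate equal to the reduction of $n$ in $k$, since $S_0(\underline{X},\underline{Y}) = X_0 + Y_0$ gives componentwise addition in the $0$-slot. This reduction is nonzero, so $n \cdot 1 \notin I_p(G,k)$; that is, $\|n\cdot 1\|_{\IV} = 1 = |n|_p$.

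Next I would handle an arbitrary integer $n = p^r m$ with $\gcd(m,p) = 1$. Starting from $m \cdot 1 \in I_1(G,k) \setminus I_p(G,k)$, I would apply Theorem~\ref{thm:corn} exactly $r$ times to conclude
\[
n \cdot 1 \;=\; p^r (m \cdot 1) \;\in\; I_{p^r}(G,k) \setminus I_{p^{r+1}}(G,k),
\]
so $\|n \cdot 1\|_{\IV} = p^{-r} = |n|_p$. This is the heart of the argument, and the only nontrivial input is Theorem~\ref{thm:corn}; everything else is bookkeeping. (Alternatively, one could combine the already-proven upper bound $\|n\cdot 1\|_{\IV} \leq \|m\cdot 1\|_{\IV}\|p^r\|_{\IV} \leq p^{-r}$ from Theorem~\ref{thm:normmult} with a single iteration of Theorem~\ref{thm:corn} applied to $p^r$ to verify membership in $I_{p^r}\setminus I_{p^{r+1}}$, but iterating on $m\cdot 1$ directly is cleaner.)

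Having verified $\|n\cdot 1\|_{\IV} = |n|_p$ for all $n \in \ZZ$, the inclusion $\ZZ \hookrightarrow \W_G(k)$ is an isometry onto its image when $\ZZ$ is given the $p$-adic metric. Since $\{I_{p^r}(G,k)\}_{r \geq 0}$ is a neighborhood basis at $\mbf{0}$ in the initial vanishing topology, pulling back gives $\{p^r\ZZ\}_{r \geq 0}$ as a neighborhood basis at $0$ in $\ZZ$, which is precisely the $p$-adic topology. There is no genuine obstacle here; the corollary is a direct consequence of the multiplication-by-$p$ formula established in Theorem~\ref{thm:corn}, and the work lies entirely in the $0$-coordinate computation for units and the iteration of that theorem.
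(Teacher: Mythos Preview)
Your proposal is correct and follows essentially the same approach as the paper: both iterate Theorem~\ref{thm:corn} to locate $p^r m$ (with $\gcd(m,p)=1$) in $I_{p^r}(G,k)\setminus I_{p^{r+1}}(G,k)$, the only cosmetic difference being that the paper iterates on $\mbf{1}$ and then multiplies by the unit $m$, whereas you iterate directly on $m\cdot\mbf{1}$.
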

\begin{proof}
By induction with Theorem \ref{thm:corn}, 
starting with $\mbf{a} = (1,0,0,0,\dots)$, 
$p^n \in I_{p^n}(G,k) - I_{p^{n+1}}(G,k)$.  
If $m \in \ZZ$ is not a multiple of $p$ then 
$m \in \W_G(k)^\times$, so 
$p^nm \in I_{p^n}(G,k) - I_{p^{n+1}}(G,k)$.  
Therefore when $a$ and $b$ are in $\ZZ$, 
$a \equiv b \bmod I_{p^n}(G,k)$ 
if and only if $a \equiv b \bmod p^n$. 
\end{proof}

\begin{theorem}
\label{thm:cpt}
Let $G = \ZZ_p^2$. For $c \in \ZZ_p$ and $\mbf{a}$ in $\W_G(k)$, 
$||c \mbf{a}||_{\IV} = |c|_p ||\mbf{a}||_{\IV}$, so $||\cdot ||_{\IV}$ is a $\ZZ_p$-algebra norm 
on $\W_G(k)$.
\end{theorem}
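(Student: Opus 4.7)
The plan is to leverage Theorem~\ref{thm:corn} (scaling by $p$ shifts the filtration level by exactly one) to get exact equality for integer scalars, then extend to $\ZZ_p$ by continuity using completeness.

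\textbf{Step 1 (multiplication by $p^m$).} Theorem~\ref{thm:corn} translates directly into the statement $\|p\mbf{a}\|_{\IV} = p^{-1}\|\mbf{a}\|_{\IV}$, since if $\mbf{a} \in I_{p^n}(G,k) \setminus I_{p^{n+1}}(G,k)$ then $p\mbf{a} \in I_{p^{n+1}}(G,k) \setminus I_{p^{n+2}}(G,k)$. Iterating gives $\|p^m\mbf{a}\|_{\IV} = p^{-m}\|\mbf{a}\|_{\IV}$ for all $m \geq 0$. This takes care of the $p$-part.

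\textbf{Step 2 (units prime to $p$).} If $u \in \ZZ$ is coprime to $p$, then the $0$-coordinate of $u$ in $\W_G(k)$ is the nonzero image $\bar u \in k$, so $u$ lies outside the maximal ideal $\fm = \{\mbf{a} : a_0 = 0\}$ (see \cite[Thm.~2.16]{Mil}) and is therefore a unit in $\W_G(k)$, with $\|u\|_{\IV} = \|u^{-1}\|_{\IV} = 1$. Submultiplicativity in Theorem~\ref{thm:normmult} applied to both $u$ and $u^{-1}$ forces $\|u\mbf{a}\|_{\IV} = \|\mbf{a}\|_{\IV}$. Combining Steps 1 and 2, any $c = p^m u \in \ZZ$ with $u$ coprime to $p$ satisfies $\|c\mbf{a}\|_{\IV} = p^{-m}\|\mbf{a}\|_{\IV} = |c|_p\|\mbf{a}\|_{\IV}$.

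\textbf{Step 3 (extending to $\ZZ_p$).} The previous corollary shows the structural map $\ZZ \to \W_G(k)$ is continuous for the $p$-adic topology on $\ZZ$ and the initial vanishing topology on $\W_G(k)$. By completeness (Theorem~\ref{thm:Itopcomplete}) this extends uniquely to a continuous ring homomorphism $\ZZ_p \to \W_G(k)$, which is injective because $p$ is not a zero divisor (preceding corollary) and because units prime to $p$ have $\|\cdot\|_{\IV}=1$. For $c \in \ZZ_p$, pick $c_n \in \ZZ$ with $c_n \to c$; then $|c_n|_p \to |c|_p$, and continuity of multiplication in $\W_G(k)$ gives $c_n\mbf{a} \to c\mbf{a}$. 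Since $\|\cdot\|_{\IV}$ is an ultrametric (so automatically $1$-Lipschitz, hence continuous), Step 2 yields $\|c\mbf{a}\|_{\IV} = \lim_n \|c_n\mbf{a}\|_{\IV} = \lim_n |c_n|_p\|\mbf{a}\|_{\IV} = |c|_p\|\mbf{a}\|_{\IV}$.

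\textbf{Norm axioms.} The ultrametric triangle inequality and submultiplicativity hold by Theorem~\ref{thm:normmult}, the norm is nondegenerate by definition, and Step 3 supplies $\ZZ_p$-homogeneity, so $\|\cdot\|_{\IV}$ is a $\ZZ_p$-algebra norm. The real work was already done in Theorem~\ref{thm:corn}; the main conceptual subtlety here is just verifying that $\W_G(k)$ genuinely carries a compatible $\ZZ_p$-algebra structure, which follows from completeness plus the preceding corollary.
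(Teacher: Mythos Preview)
Your proof is correct and follows essentially the same route as the paper: reduce to the cases $c=p$ (handled by Theorem~\ref{thm:corn}) and $c$ a unit (handled by submultiplicativity applied to both $u$ and $u^{-1}$). The only difference is organizational: the paper writes $c=p^nu$ with $u\in\ZZ_p^\times$ and applies the unit argument directly to $p$-adic units, implicitly taking the $\ZZ_p$-algebra structure on $\W_G(k)$ for granted, whereas you first prove the identity for integer scalars and then pass to $\ZZ_p$ by a continuity/completeness argument. Your version is a bit more explicit about why $\W_G(k)$ is a $\ZZ_p$-algebra in the first place, which is a reasonable thing to spell out; the paper's version is shorter once that is accepted.
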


\begin{proof}
This is obvious if $c$ or $\mbf{a}$ vanishes, so we may assume neither does. 
Then we can write $c = p^nu$ for $n \geq 0$ and $u \in \ZZ_p^\times$, so 
it suffices to check the two cases $c = u$ and $c = p$. 

By Theorem \ref{thm:normmult}, for $u \in \ZZ_p^\times$ we have 
$||u\mbf{a}||_{\IV} \leq ||u||_{\IV}||\mbf{a}||_{\IV} \leq ||\mbf{a}||_{\IV}$. Since $u$ is a unit, 
$||\mbf{a}||_{\IV} = ||u^{-1}(u\mbf{a})||_{\IV} \leq ||u^{-1}||_{\IV}||u\mbf{a}||_{\IV} \leq ||u\mbf{a}||_{\IV}$, so $||u\mbf{a}||_{\IV} = ||\mbf{a}||_{\IV}$.

It remains to show $||p\mbf{a}||_{\IV} = (1/p)||\mbf{a}||_{\IV}$, which is 
equivalent to showing that if $\mbf{a} \in I_{p^n}(G,k) - I_{p^{n+1}}(G,k)$ then 
$p\mbf{a} \in I_{p^{n+1}}(G,k) - I_{p^{n+2}}(G,k)$. 
That is Theorem \ref{thm:corn}.
\end{proof}

\begin{remark}
If one shows Theorem \ref{thm:corn} holds for $G = \ZZ_p^d$, $d \geq 3$, Theorem \ref{thm:cpt} will hold as well. 
\end{remark}

Since $p$ is not a zero-divisor in $\W_G(k)$, 
we can formally invert $p$ and consider the ring $\W_G(k)[1/p]$, 
which contains $\ZZ_p[1/p] = \QQ_p$.

\begin{theorem}
Let $G = \ZZ_p^2$. There is a unique extension of $||\cdot||_{\IV}$ from 
$\W_G(k)$ to a $\QQ_p$-algebra norm on $\W_G(k)[1/p]$, described 
as follows. 
For $\mbf{v} \in \W_G(k)[1/p]$, write $\mbf{v} = (1/p^m)\mbf{a}$ where
$m \geq 0$ and $\mbf{a} \in \W_G(k)$.   
Then $||\mbf{v}||_{\IV} = ||\mbf{a}||_{\IV}/|p|_p^m = p^m||\mbf{a}||_{\IV}$. 
\end{theorem}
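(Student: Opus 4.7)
The plan is to verify well-definedness of the proposed formula, then check the norm axioms by reducing to the established norm on $\W_G(k)$ via a common-denominator trick, and finally deduce uniqueness from the $\QQ_p$-compatibility requirement. All three steps lean heavily on Theorem~\ref{thm:cpt}, which gives $||p\mbf{a}||_{\IV} = (1/p)||\mbf{a}||_{\IV}$, and on the fact that $p$ is not a zero divisor in $\W_G(k)$ so that $\W_G(k) \hookrightarrow \W_G(k)[1/p]$ and every element $\mbf{v} \in \W_G(k)[1/p]$ can be written as $\mbf{a}/p^m$ for some $m \geq 0$ and $\mbf{a} \in \W_G(k)$.

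First, I would check that the value $p^m||\mbf{a}||_{\IV}$ does not depend on the chosen representation $\mbf{v} = \mbf{a}/p^m$. If $\mbf{a}/p^m = \mbf{b}/p^n$ in $\W_G(k)[1/p]$, then $p^n\mbf{a} = p^m\mbf{b}$ in $\W_G(k)$ (using that $p$ is a nonzero divisor to clear denominators). Applying $||\cdot||_{\IV}$ and Theorem~\ref{thm:cpt} yields $p^{-n}||\mbf{a}||_{\IV} = p^{-m}||\mbf{b}||_{\IV}$, i.e., $p^m||\mbf{a}||_{\IV} = p^n||\mbf{b}||_{\IV}$, as required. In particular, when $m = 0$ the formula returns $||\mbf{a}||_{\IV}$, so the proposed extension restricts to the given norm on $\W_G(k)$.

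Next, I would verify the norm axioms on $\W_G(k)[1/p]$ by the standard common-denominator trick: given $\mbf{v} = \mbf{a}/p^m$ and $\mbf{w} = \mbf{b}/p^n$, we may enlarge one denominator and assume $m = n$, writing $\mbf{v} = \mbf{a}/p^m$ and $\mbf{w} = \mbf{b}/p^m$. Then $\mbf{v} + \mbf{w} = (\mbf{a} + \mbf{b})/p^m$ and $\mbf{v}\mbf{w} = (\mbf{a}\mbf{b})/p^{2m}$, and Theorem~\ref{thm:normmult} applied to $\mbf{a}, \mbf{b} \in \W_G(k)$ gives
\begin{align*}
||\mbf{v} + \mbf{w}||_{\IV} &= p^m ||\mbf{a} + \mbf{b}||_{\IV} \leq p^m \max(||\mbf{a}||_{\IV}, ||\mbf{b}||_{\IV}) = \max(||\mbf{v}||_{\IV}, ||\mbf{w}||_{\IV}), \\
||\mbf{v}\mbf{w}||_{\IV} &= p^{2m} ||\mbf{a}\mbf{b}||_{\IV} \leq p^{2m} ||\mbf{a}||_{\IV}||\mbf{b}||_{\IV} = ||\mbf{v}||_{\IV} ||\mbf{w}||_{\IV}.
\end{align*}
Positive definiteness is immediate from the corresponding property of $||\cdot||_{\IV}$ on $\W_G(k)$. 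For $\QQ_p$-compatibility, write $c \in \QQ_p^\times$ as $c = up^r$ with $u \in \ZZ_p^\times$ and $r \in \ZZ$; then $c\mbf{v} = u\mbf{a}/p^{m-r}$, and combining Theorem~\ref{thm:cpt} with the well-definedness already verified yields $||c\mbf{v}||_{\IV} = p^{m-r}||u\mbf{a}||_{\IV} = p^{m-r}||\mbf{a}||_{\IV} = |c|_p \, ||\mbf{v}||_{\IV}$.

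Finally, uniqueness is automatic: any $\QQ_p$-algebra norm $||\cdot||'$ on $\W_G(k)[1/p]$ that agrees with $||\cdot||_{\IV}$ on $\W_G(k)$ must satisfy $||\mbf{v}||' = ||(1/p^m)\mbf{a}||' = |1/p^m|_p ||\mbf{a}||' = p^m ||\mbf{a}||_{\IV}$, which is exactly the proposed formula. The only subtle step is the well-definedness in Step~1, where one needs the fact that $p$ is not a zero divisor and the precise scaling formula from Theorem~\ref{thm:cpt}; everything else is a routine transcription of the arguments that show how a submultiplicative ultrametric norm extends to a localization at a non-zero-divisor.
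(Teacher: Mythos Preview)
Your proof is correct and follows the same approach as the paper's own proof: establish well-definedness via the identity $p^n\mbf{a}=p^m\mbf{b}$ and Theorem~\ref{thm:cpt}, then verify the $\QQ_p$-algebra norm axioms by clearing denominators and invoking Theorem~\ref{thm:normmult}, and finally read off uniqueness from $\QQ_p$-compatibility. The paper compresses the middle step into a single sentence (``by scaling two elements \ldots into $\W_G(k)$ by a common power of $p$, doing a computation there, and then rescaling back''), whereas you spell out the ultrametric, submultiplicative, and $\QQ_p$-scaling checks explicitly; the content is the same.
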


\begin{proof}
We check the function $||\cdot||_{\IV}$ on $\W_G(k)[1/p]$ is 
well-defined.  If 
$\mbf{v} = (1/p^m)\mbf{a} = (1/p^n)\mbf{b}$ for some 
$m$ and $n$ that are $\geq 0$ and 
$\mbf{a}$ and $\mbf{b}$ in $\W_G(k)$ then 
$p^n\mbf{a} = p^m\mbf{b}$ in $\W_G(k)$. 
Taking the norm of both sides, 
$$
\frac{1}{p^n}||\mbf{a}||_{\IV} = \frac{1}{p^m}||\mbf{b}||_{\IV}, 
$$
so $p^m||\mbf{a}||_{\IV} = p^n||\mbf{b}||_{\IV}$.

By scaling two elements of $\W_G(k)[1/p]$ into $\W_G(k)$ 
by a common power of $p$, doing a computation there, and
then rescaling back, the axioms of a $\QQ_p$-algebra norm 
on $\W_G(k)[1/p]$ are verified for $||\cdot||_{\IV}$

It is simple to see the formula for $||\cdot||_{\IV}$ is the only possible 
$\QQ_p$-algebra norm on $\W_G(k)[1/p]$ extending the original norm on $\W_G(k)$.
\end{proof}

Obviously $\W_G(k)$ is inside the unit ball in $\W_G(k)[1/p]$, but that is 
not the whole unit ball.  For example, 
if $\#T = p^n$ then $||\omega_T(1)||_{\IV} = 1/p^n = ||p^n||_{\IV}$, so 
$(1/p^n)\omega_T(1)$ is in the unit ball but let's see it is not in 
$\W_G(k)$ for $n \geq 1$. If $p^{-n}\omega_T(1) = \mbf{v}$ is in $\W_G(k)$ then $\omega_T(1) = p^n\mbf{v}$, 
however this is not possible as $p^n \mbf{v}$ has a support of more than one $G$-set by 
Corollary \ref{cor:pmultsup} and $\omega_T(1)$  has support $\{ T \}$. 

Although every individual element of $\W_G(k)[1/p]$ can be scaled by a power of $p$ to become an element of $\W_G(k)$, this does not mean we can scale all the terms in a Cauchy sequence in $\W_G(k)[1/p]$ by a single power of $p$ to land in $\W_G(k)$.  

%%There may be a problem with this
\begin{example}
For $n \geq 0$, let $T_{2n} \in \mcal{F}(G)$ have size $p^{2n}$, so 
$||\omega_{T_{2n}}(1)||_{\IV} = 1/p^{2n}$.  Then 
$||\omega_{T_{2n}}(1)/p^n||_{\IV} = 1/p^n \rightarrow 0$, so 
if $\W_G(k)[1/p]$ were complete then 
the series
$$
s = \sum_{n \geq 0} \frac{1}{p^n}\omega_{T_{2n}}(1) 
$$
would converge in $\W_G(k)[1/p]$. Multiplying $s$ by a suitable power of 
$p$ would put it in $\W_G(k)$, say $p^Ns \in \W_G(k)$:
$$
\sum_{n \geq 0} \frac{p^N}{p^n}\omega_{T_{2n}}(1)  \in \W_G(k). 
$$
This is impossible. If $\sum_{n \geq 0} p^{N-n}\omega_{T_{2n}}(1)  \in \W_G(k)$ write $$\sum_{n \geq 0} p^{N-n}\omega_{T_{2n}}(1) = \sum_{n \leq N} p^{N-n}\omega_{T_{2n}}(1) + \sum_{n > N}p^{N-n}\omega_{T_{2n}}(1).$$ Since $\sum_{n \leq N} p^{N-n}\omega_{T_{2n}}(1) \in \W_G(k)$, $\sum_{n > N}p^{N-n}\omega_{T_{2n}}(1)$ would have to be also. However, we would then have a partial sum $p^{-1}\omega_{T_{2(N+1)}}(1) + \ldots +p^{-r}\omega_{T_{2(N+r)}}(1) \in \W_G(k)$ for some $r > 1$ and multiplying by $p^{r-1}$ gives that $p^{-1}\omega_{T_{2(N+r)}}(1)$ is in $\W_G(k)$ which is a contradiction. 
\end{example}

%\subsection{Trees}
%
%
%\begin{definition}\label{def:size}
%Denote by $\tree$ the tree of $\ZZ_p$-lattices in $\QQ_p^2$ up to scaling by $\ZZ_p$\footnote{One needs to say scaling by what.}. 
%\end{definition}
%
%View $\tree$ as a rooted tree constructed as $p+1$ copies of a $p$-ary tree glued together with a  common root. The boundary is $\mathbf{P}^1(\QQ_p)$ \cite{Ser80}. Vertices of the tree are denoted by capital roman letters, usually $T,U,V$. The root node will be denote $\root$. 
%
%\begin{definition} For a vertex $T \in \tree$, the size\footnote{This is how it was defined in the algebra land, it could just as easily been $n$ however, this seems arbitrary and to link up with other things this normalization might be easier. Let's keep this flagged for deletion if we end up not using the size concept.} $\# T$ is $p^n$ where $n$ the number of edges in a path connecting $\root$ and $T$. 
%\end{definition}
%
%Background will describe the ring $\W_{\ZZ_p^2}(k)$ and the ideal $J \subset \W_{\ZZ_p^2}(k)$. The ring structure is not easy to write down as it is defined by recursively defined polynomials. In the quotient $\W_{\ZZ_p^2}(k)/J$ these polynomials are the same as those defining the $p$-typical Witt vectors $W(k)$. These, as a set, are the admissible vectors on $\tree$. 
%
%\begin{definition}\label{def:R} Set $R = \W_{\ZZ_p^2}(k)/J$. Its elements are denoted $\mbf{a} = (a_T)_{T \in \tree}$ where $a_T \in k$. \end{definition}
%
%
%

\begin{remark}
Since the initial vanishing topology on the $p$-typical Witt vectors is exactly its $p$-adic topology, the associated norm $|| \bullet ||_{\IV}$ on the ring $\W_{\ZZ_p}(k)$ is precisely the usual norm, which we denote by $|| \bullet ||_{\W(k)}$, on the Witt vectors. 
\end{remark}

\section{Interpretation as functions rings}\label{sec:FunRing}

One of the most vexing issues in studying Witt-Burnside rings, in particular the $G = \ZZ_p^d$ for $d \geq 2$ cases, is that they have not yet been expressed in terms of any known rings. We summarize what is known both from \cite{Mil} and this article about these cases. The rings $\W_{\ZZ_p^d}(k)$ where $k$ is a field of characteristic $p$ have a unique maximal ideal (\cite[Thm. 2.16]{Mil}) and thus are not noetherian rings (\cite[Thm. 4.5]{Mil}). They are complete both in their natural profinite topology and a metric topology (Theorem \ref{thm:cpt}) which we called the initial vanishing topology.  The initial vanishing topology is equivalent to the profinite topology (Theorem \ref{thm:sametop}). These rings are not domains, but when $d=2$, $\W_{\ZZ_p^2}(k)$ is reduced (\cite[Thm. 5.17]{Mil}). The natural prime ideals of the form $\fp_f := \ker ( f : \ZZ_p^d \onto \ZZ_p)$ have a non-zero intersection; see Subsection~\ref{sec:PI}.  So when $d=2$ there must be more prime ideals. The ring being reduced and non-noetherian suggests
an interpretation as a ring of continuous functions on some topological space, which we give for a suitable quotient of $\W_{\ZZ_p^2}(k)$.

Set $J = \bigcap_{f : \ZZ_p^d \onto \ZZ_p} \fp_f = \{ \mbf{a} \in \W_{\ZZ_p^d}(k) \colon a_T = 0 \mbox{ if } T \mbox{ is cyclic}\}$. This is an ideal by Theorem~\ref{thm:integralwitt}. The quotient $\W_{\ZZ_p^2}(k)/J$ remains local and is also not noetherian
% as the proof of 
\cite[Thm. 4.5]{Mil}. %only required the existence of linked pairs of $\ZZ_p^2$-sets which are prevalent in level $0$. 
It is also reduced % as the proof of
 (\cite[Thm. 5.17]{Mil}). % only required finding a nonzero coordinate in the square of a Witt vector, and this coordinate lies in the same level as any non-zero coordinate of minimal level. 
The initial vanishing topology and its corresponding metric naturally descend and so this quotient becomes a complete metric space in the initial vanishing topology. Note that taking the quotient by $J$ amounts to restricting the Witt vectors in $\W_{\ZZ_p^2}(k)$ exactly to the tree of $\ZZ_p$-lattices in $\QQ_p^2$ up to $\QQ_p^\times$-scaling, i.e,. the Bruhat-Tits tree for $\SL_2(\QQ_p)$ \cite{Ser80} and so we interpret them as $k$-valued functions on the vertices of this tree. As such the ring $\W_{\ZZ_p^2}(k)/J$ is not a Witt-Burnside ring $\W_G$ for any profinite group $G$. 

%to $R$, so $(R, || \bullet ||_{IV})$ is a complete metric space in the IV topology.  

\

{\it For the remainder of the section $\tree$ denotes the tree of $\ZZ_p$-lattices in $\QQ_p^2$ up to $\QQ_p^\times$-scaling with a chosen root $\root \in \tree$ corresponding to the $\ZZ_p^2$-set $\ZZ_p^2/\ZZ_p^2$, $$R := \W_{\ZZ_p^2}(k)/J,$$ and $k$ is a field of characteristic $p$.} 

\subsection{Ultrametric Lipschtiz continuous functions}

To identify $R$ as a function space, we deal with two layers of topological spaces. The first layer consists of the spaces $\tree$ and its boundary and the second layer consists of functions spaces, such as the space of continuous functions over the boundary of $\tree$. We retain the notion of size in $\tree$ as the size of the corresponding $G$-set. The elements of $R$ may be thought of as $k$-valued functions on the vertices of $\tree$ and we call them  {\it admissible vectors}. 

We often consider paths in $\tree$ and in this article we only consider paths which are {\it loop-erased}, i.e., paths that never visit the same vertex twice. This hypothesis is assumed but will not be stated at each instance. 

\begin{definition}The tree $\tree$ has a natural metric on its vertices by setting $d_\tree(T,U) = p^{-n}$ where $n$ is the number of edges in common between the two unique paths from $\root$ to $T$ and $\root$ to $U$. 
\end{definition}

\begin{definition} 
The boundary of $\tree$, denoted $\partial \tree$, is the collection of infinite paths beginning at $\root \in \tree$  with the metric $d_{\partial \tree}$, the extension of $d_{\tree}$ to infinite length paths. This metric space is a realization of $\PP^1(\QQ_p)$ with the usual $p$-adic metric it inherits from $\ZZ_p$ \cite{Ser80}. Note that this metric gives $\PP^1(\QQ_p)$ diameter $1$. We equip the set of paths with an extension of $d_{\tree}$ to paths of infinite length and denote this extension as $d_{\P1}$.
\end{definition}

%We will often consider paths in trees. Since a path is a sequence of vertices in the graph such that there is an edge connected consecutive vertices in the path it is quite possible for an arbitrary path to back track over itself. For example, for $T,U$ vertices in a graph that have an edge connecting them we could have as part of the path sequence $\ldots TUT \ldots$. Of course the single vertex $U$ could be replaced with a longer path of finite length. It will often be important that we restrict to \emph{loop-erased paths} which are paths that never visit the same vertex twice. 

%In Remark \ref{rem:typmet} a metric on $p$-typical Witt vectors was defined. It was also commented on that the underlying graph of for a $p$-typical Witt vector was a single infinite path from a root. So it is possible to view an element of $\partial \tree$ together with its labeling as a Witt vector itself. 

The $p$-typical Witt vectors (i.e., Witt vectors for $G = \ZZ_p$) form a metric space under the usual $p$-adic metric, which has an associated norm that we denote by $|| \bullet ||_{\W(k)}$. As such we may consider continuous functions $\PP^1(\QQ_p) \rightarrow \W(k)$. The Lipschitz condition will also be relevant in later discussions.
%we can easily speak about continuous and Lipschitz continuous functions into $\W(k)$. 
%Considering the $p$-typical Witt vectors $\W(k)$ in its natural metric topology (which is the same as its pro-finite topology\footnote{Not yet defined.} and its initial vanishing topology which is defined by a metric induced from a norm $||\bullet ||_{\W(k)}$). This induces a natural metric on functions from $\PP^1(\QQ_p)$ to $\W(k)$ and as such we can define continuous and Lipschitz continuos functions. 

\begin{definition}
Denote by $\C(\PP^1(\QQ_p),\W(k))$ the continuous functions from $(\PP^1(\QQ_p),d_{\PP^1(\QQ_p)})$  to $(\W(k),\| \bullet\|_{\W(k)})$. Denote by $\Lip_\alpha(\PP^1(\QQ_p),\W(k)) \subset \C(\PP^1(\QQ_p),\W(k))$ the Lipschitz continuous functions with Lipschitz constant less than $\alpha$. That is $f \in \Lip_\alpha(\PP^1(\QQ_p),\W(k))$ provided that for all $x,y \in \PP^1(\QQ_p)$
$$||f(x) - f(y)||_{\W(k)} \leq \alpha \cdot d_{\PP^1(\QQ_p)}(x,y).$$
%Denote by $\C(\PP^1(\QQ_p),\W(k))$ to the continuous functions from $\PP^1(\QQ_p)$ equipped with the metric $d_{\P1}$ to  $\W(k)$ equipped with the metric induced by $|| \bullet ||_{\W(k)}$. Denote by $\Lip_\alpha(\PP^1(\QQ_p),\W(k)) \subset \C(\PP^1(\QQ_p),\W(k))$ the Lipschitz continuous functions with Lipschitz constant $\alpha > 0$. That is $f \in \Lip_\alpha(\PP^1(\QQ_p),\W(k))$ provided $$||f(x) - f(y)||_{\W(k)} \leq \alpha \cdot d_{\PP^1(\QQ_p)}(x,y)$$ for all $x,y \in \PP^1(\QQ_p)$. 
\end{definition}

%The function space $\Lip_\alpha(\PP^1(\QQ_p),\W(k))$ is what we identify with $R$. Since $R$ is a metric space we discuss the corresponding metric on $\Lip_\alpha(\PP^1(\QQ_p),\W(k))$.
%
%\begin{definition}
%Let $X$ be a topological space and $Y$ a metric space. For $f \in \Lip(X,Y)$ set the least Lipschitz constant for $f$ to be  $$|f|_L = \inf_{\alpha>0} \left\{ \alpha : f \in \Lip_{\alpha}(X,Y) \right\}.$$ We give $\Lip(X,Y)$ the metric structure defined by $\|f\|_{\Lip} = \|f\|_{\sup} + |f|_L$ where $\| \bullet \|_{\sup}$ is the supremum norm of $f$. 
%\end{definition}
%
%Naturally the continuous functions $\C(X,Y)$ can be considered with just the supremum norm and alternatively one may endow $\Lip(X,Y)$ with just the $\| \bullet \|_{\sup}$ norm. We will be very specific with regards to which norm is being used in each instance. 
%
%We state a few interesting properties of Lipschitz functions into an ultrametric space. The proofs of these are routine and left to the reader. Example \ref{ex:CtsNotLip} however will show that while the Lipschitz property will be useful in the main theorem the Banach space\footnote{What is meant here?} of Lipschitz functions will not be.
%
The function space we are most interested in is $\Lip_{p^{-1}}(\PP^1(\QQ_p),\W(k))$; however we need to discuss two metrics on it. 
%It is $\Lip_\alpha(\PP^1(\QQ_p),\W(k))$ that is the function space in which we have the most interest. %Since $R$ is a metric space under $||\bullet ||_{\IV}$
%However, we will need to discuss two metrics on $\Lip_\alpha(\PP^1(\QQ_p),\W(k))$.

\begin{definition}
Suppose that $Y$ has a norm. For $f \in \Lip(X,Y)$ let $$|f|_L = \inf_{\alpha>0} \left\{ \alpha : f \in \Lip_{\alpha}(X,Y) \right\}.$$ The space $\Lip(X,Y)$ is metrized by $\|f\|_{\Lip} = \|f\|_{\sup} + |f|_L$, where $\| \cdot \|_{\sup}$ is the supremum norm of $f$ as a function taking values in $Y$ and $|f|_L$ is the least Lipschitz constant for $f$. 

One can alternatively endow $\Lip(X,Y)$ with the norm $\| \cdot \|_{\sup}$ and we will be very specific with regards to which norm is being used in each instance. Naturally $\C(X,Y)$ is endowed with the supremum norm. 
\end{definition}

We note for those who have not considered the collection of Lipschitz functions into bounded ultrametric spaces algebraically that unlike their Archimediean counterparts, ultrametic Lipschitz functions do in fact form a ring.

\begin{lemma}\label{prop:LipProp}
Let $(X,d_X)$ be a metric space and $(Y,d_Y)$ an ultrametric topological ring whose metric arises from a norm. Further assume that $Y$ has diameter $1$. For any $\alpha$, the set $\Lip_\alpha (X,Y)$ forms a topological ring pointwise under addition and multiplication. In particular, multiplication by a constant function provides a $Y$-action on $\Lip_\alpha(X,Y)$ 
\end{lemma}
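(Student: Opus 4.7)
The plan is straightforward: verify directly that the pointwise sum, difference, and product of two functions in $\Lip_\alpha(X,Y)$ again lie in $\Lip_\alpha(X,Y)$, and then observe that addition and multiplication are continuous with respect to the supremum norm (which is itself an ultrametric norm on function spaces into $Y$). The ultrametric hypothesis and the normalization that $Y$ has diameter $1$ are both essential, and they are the only nontrivial inputs.

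First I would check closure under addition. If $f,g \in \Lip_\alpha(X,Y)$, then for any $x,y \in X$,
\[
\|(f+g)(x) - (f+g)(y)\|_Y \leq \max\bigl(\|f(x)-f(y)\|_Y,\;\|g(x)-g(y)\|_Y\bigr) \leq \alpha\, d_X(x,y),
\]
using the ultrametric inequality in $Y$. This is exactly the place where the ultrametric hypothesis saves us from a factor of $2$. The same computation handles $f-g$.

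Next I would handle multiplication. Write
\[
f(x)g(x) - f(y)g(y) = f(x)\bigl(g(x)-g(y)\bigr) + \bigl(f(x)-f(y)\bigr)g(y),
\]
and apply the ultrametric inequality together with submultiplicativity of the norm on $Y$:
\[
\|f(x)g(x) - f(y)g(y)\|_Y \leq \max\bigl(\|f(x)\|_Y\|g(x)-g(y)\|_Y,\;\|g(y)\|_Y\|f(x)-f(y)\|_Y\bigr).
\]
Since $Y$ has diameter $1$, each of $\|f(x)\|_Y$ and $\|g(y)\|_Y$ is at most $1$, so the right side is bounded by $\alpha\, d_X(x,y)$. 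Thus $fg \in \Lip_\alpha(X,Y)$. The same bound, specialized to $g$ a constant function with value $c \in Y$, gives $\|c\|_Y \leq 1$ times the Lipschitz constant, which shows multiplication by any $c \in Y$ preserves $\Lip_\alpha(X,Y)$ and endows it with a $Y$-module (in fact $Y$-algebra) structure.

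Finally, to get a \emph{topological} ring, I would observe that with respect to the supremum norm on $\Lip_\alpha(X,Y) \subset \C(X,Y)$, one has $\|f+g\|_{\sup} \leq \max(\|f\|_{\sup},\|g\|_{\sup})$ and $\|fg\|_{\sup} \leq \|f\|_{\sup}\|g\|_{\sup}$, which together with the bilinearity of addition and multiplication give continuity of both operations; the $Y$-action $(c,f) \mapsto cf$ is likewise continuous since $\|cf\|_{\sup} \leq \|c\|_Y\|f\|_{\sup}$. The only real obstacle is to recognize which two hypotheses (ultrametric and diameter $\leq 1$) are being used and where; the rest is bookkeeping.
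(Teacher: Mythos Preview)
Your proof is correct and follows essentially the same approach as the paper: closure under addition via the ultrametric inequality, closure under multiplication via an add-and-subtract decomposition together with the diameter-$1$ bound. Your single decomposition for the product is in fact cleaner than the paper's, which performs two symmetric decompositions before combining them, and you also spell out the continuity of the ring operations that the paper leaves to the reader.
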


\begin{proof}
We check that $\Lip_\alpha(X,Y)$ is closed under pointwise sums and products and leave the rest of the details to the interested reader. Since $Y$ is a topological ring the metric is compatible with the ring operations and we can write $d_Y(z,w) = d(z-w,0) := |z-w|_Y$.
Let $f,g \in \Lip_\alpha(X,Y)$ and $x,y \in X$ then
\begin{eqnarray*}
	d_Y\left( (f+g)(x),(f+g)(y)\right) &=& |(f+g)(x) - (f+g)(y)|_Y\\
	&\le & \max \left\{ |f(x) - f(y)|_Y , |g(x) - g(y)|_Y\right\}\\
	&\le & \max \left\{ \alpha d_X(x,y) , \alpha d_X(x,y) \right\} = \alpha d_X(x,y).\\
\end{eqnarray*}

Again let $f,g \in \Lip_\alpha(X,Y)$ and $x,y \in X$ we estimate the norm of the product's oscillation two ways and then recombine. First
\begin{eqnarray*}
	d_Y\left( f(x)g(x), f(y)g(y) \right) &=& | f(x)g(x) - f(y)g(y)|_Y\\
	&=& | (f(x)g(x) - f(y)g(x)) - f(y)g(y) + f(y)g(x) |_Y\\
%	&=& | g(x)(f(x)-f(y)) - f(y)(g(y)-g(x)) |_Y\\
	&\le & \max\{ |g(x)|_Y|f(x)-f(y)|_Y, |f(y)|_Y |g(x) - g(y)|_Y \}.
%	&\le & \max \{ |g(x)|_p \max \{|f(x)|_p,|f(y)|_p\} , |f(y)|_p \max \{|g(x)|_p,|g(y)|_p\}\}\\ 
%	&=& Ld_p(x,y)\max\{|g(x)|_p,|f(y)|_p\}
\end{eqnarray*}
Thus
\begin{eqnarray*}
	d_Y\left( f(x)g(x), f(y)g(y) \right) &=& | f(x)g(x) - f(y)g(y)|_Y\\
	&=& | f(x)g(x) - f(x)g(y) - f(y)g(y) + f(x)g(y) |_Y\\
%	&=& | f(x)(g(x)-g(y)) - g(y)(f(y)-f(x)) |_Y\\
	&\le & \max \{ |f(x)|_Y|g(x)-g(y)|_Y, |g(y)|_Y|f(x)-f(y)|_Y \}.
\end{eqnarray*}

Combining these calculations we get
\begin{equation*}
	d_Y\left( f(x)g(x), f(y)g(y) \right) \le \alpha d_X(x,y)\max_{r=x,y}\{ |f(r)|_Y,|g(r)|_Y \} \le \alpha d_X(x,y)
\end{equation*}
Since $f$ and $g$ are $Y$ valued $\max_{r=x,y}\{ |f(r)|_Y,|g(r)|_Y \} \le 1$ so we have that $fg$ is also in  $\Lip_\alpha(X,Y)$.
\end{proof}

We state the completeness of Lipschitz functions into an ultrametric space. Example \ref{ex:CtsNotLip} however will show that while the Lipschitz property will be useful in the main theorem the natural Banach space structure of Lipschitz functions will not be.

\begin{theorem}\label{prop:Lipcomplete}
If $X$ is a metric space and $Y$ is a complete metric space, then $\Lip(X,Y)$ is complete with respect to the $\| \cdot \|_{\Lip}$ norm.
\end{theorem}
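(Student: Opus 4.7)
The plan is a standard three-step argument leveraging the completeness of $Y$. First I would observe that $\|f\|_{\sup} \le \|f\|_{\Lip}$ by definition, so any Cauchy sequence $\{f_n\}$ in $(\Lip(X,Y), \|\cdot\|_{\Lip})$ is automatically Cauchy in the supremum norm. Since $Y$ is complete, for each $x \in X$ the sequence $\{f_n(x)\}$ converges in $Y$; call its limit $f(x)$. The standard uniform-limit argument then upgrades this to $f_n \to f$ uniformly, with $f \colon X \to Y$ continuous.

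Next I would verify that $f \in \Lip(X,Y)$ and that $|f_n - f|_L \to 0$ simultaneously. Given $\varepsilon > 0$, choose $N$ so that $|f_n - f_m|_L < \varepsilon$ whenever $n, m \ge N$. For any $x, y \in X$ this means
\begin{equation*}
\|(f_n - f_m)(x) - (f_n - f_m)(y)\|_Y \le \varepsilon\, d_X(x,y).
\end{equation*}
Holding $m \ge N$ and $x, y$ fixed and letting $n \to \infty$, the pointwise convergence $f_n \to f$ together with continuity of $\|\cdot\|_Y$ gives
\begin{equation*}
\|(f - f_m)(x) - (f - f_m)(y)\|_Y \le \varepsilon\, d_X(x,y),
\end{equation*}
so $|f - f_m|_L \le \varepsilon$ for every $m \ge N$. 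In particular $f - f_N$ is Lipschitz, hence $f = f_N + (f - f_N) \in \Lip(X,Y)$.

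Finally, combining uniform convergence with the Lipschitz seminorm estimate yields $\|f_n - f\|_{\Lip} = \|f_n - f\|_{\sup} + |f_n - f|_L \to 0$, which is exactly what completeness demands. The only mildly delicate point is the passage to the limit in the Lipschitz inequality in step two, but this is purely a matter of pointwise convergence combined with continuity of the norm on $Y$, so I do not anticipate any real obstacle. Note that this argument never uses the ultrametric or ring structure on $Y$; it is the standard completeness proof for Lipschitz spaces valued in any complete normed space.
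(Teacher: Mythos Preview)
Your proof is correct and follows the same opening as the paper: both use $\|\cdot\|_{\sup}\le\|\cdot\|_{\Lip}$ to first extract a uniform limit $f$. The difference is in how the Lipschitz part is handled. The paper argues that $|f|_L\le\liminf_n|f_n|_L$ via the triangle inequality and then shows $\liminf_n|f_n|_L<\infty$ by a short contradiction using the Cauchy condition; this establishes $f\in\Lip(X,Y)$ but, as written, never explicitly verifies that $\|f_n-f\|_{\Lip}\to 0$. Your direct passage to the limit in the Lipschitz inequality (freezing $m$ and letting $n\to\infty$) yields $|f-f_m|_L\le\varepsilon$ for $m\ge N$, which in one stroke gives both $f\in\Lip(X,Y)$ and convergence in the Lipschitz norm. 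So your route is the standard textbook argument, is slightly more streamlined, and in fact supplies the final convergence step that the paper's proof leaves implicit.
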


\begin{proof} 
The Lipschitz norm dominates the supremum norm by construction, i.e. $\| \cdot \|_{\Lip} \ge \| \cdot \|_{\sup}$. %So a $\| \cdot \|_{\Lip}$-Cauchy sequence is also a Cauchy sequence in the supremum norm, so has a limit. 
Let $f_n$ be a $\| \cdot \|_{\Lip}$-Cauchy sequence. Then because $\C(X,Y)$ is complete under the supremum norm there is a pointwise limit of the $f_n \in \C(X,Y)$, call it $f$. An invocation of the triangle inequality shows that $|f|_L \le \liminf_{n \rightarrow \infty} |f_n|_L$. 

Suppose that $\liminf_{n \rightarrow \infty} |f_n|_L = \infty$. There exists $M$ such that if $n,m \ge M$ then $\|f_n-f_m\|_{\Lip} < \epsilon$ because $f_n$ is $\| \cdot \|_{\Lip}$-Cauchy. Since $\liminf_{n \rightarrow \infty} |f_n|_L = \infty$ there exists $M'$ such that $|f_n|_L > \epsilon$ and then $|f_n - f_m|_L > \epsilon$ for $n \ge \max(M,M')$ and $m \ge M$ fixed. Thus $\|f_n - f_m\|_{\Lip} \ge \epsilon$ for large enough $n$ for each $m$ which contradicts $f_n$ being a $\| \cdot \|_{\Lip}$-Cauchy sequence. So $\liminf_{n \rightarrow \infty} |f_n|_L < \infty$. Hence $\Lip(X,Y)$ is complete under the $\| \cdot \|_{\Lip}$-norm. %Since $f$ both exists as a continuous function and has finite Lipschitz constant.
\end{proof}

\begin{corollary}
For any $\alpha > 0$, $(\Lip_{\alpha}(\PP^1(\QQ_p),\W(k)),\| \bullet \|_{\Lip})$ is a complete $\W(k)$-module.
\end{corollary}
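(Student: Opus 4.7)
The plan is to deduce this directly from Theorem \ref{prop:Lipcomplete} by realizing $\Lip_\alpha(\PP^1(\QQ_p), \W(k))$ as a closed $\W(k)$-submodule of the ambient complete space $\Lip(\PP^1(\QQ_p), \W(k))$. Since $\W(k)$ is complete in its $p$-adic norm $\|\bullet\|_{\W(k)}$, Theorem \ref{prop:Lipcomplete} supplies completeness of $(\Lip(\PP^1(\QQ_p), \W(k)), \|\bullet\|_{\Lip})$, and any closed subset of a complete metric space is complete.

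For the $\W(k)$-module structure, Lemma \ref{prop:LipProp} applied with $Y = \W(k)$ (which is ultrametric of diameter $1$, since every element has $p$-adic norm at most $1$) shows that $\Lip_\alpha(\PP^1(\QQ_p), \W(k))$ is closed under pointwise sums. Moreover, for any $c \in \W(k)$ and $f \in \Lip_\alpha$, the estimate
$$\|cf(x) - cf(y)\|_{\W(k)} = \|c\|_{\W(k)} \|f(x) - f(y)\|_{\W(k)} \leq \alpha \, d_{\PP^1(\QQ_p)}(x,y)$$
confirms that $cf \in \Lip_\alpha$, so multiplication by constant functions in $\W(k)$ gives the required module structure.

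For closedness, suppose $(f_n)_{n \geq 1}$ is a sequence in $\Lip_\alpha$ converging to some $f \in \Lip$ under $\|\bullet\|_{\Lip}$. Since $\|\bullet\|_{\Lip}$ dominates $\|\bullet\|_{\sup}$, the $f_n$ converge uniformly, hence pointwise, to $f$. For any $x, y \in \PP^1(\QQ_p)$, passing to the limit in $\|f_n(x) - f_n(y)\|_{\W(k)} \leq \alpha \, d_{\PP^1(\QQ_p)}(x,y)$ yields $\|f(x) - f(y)\|_{\W(k)} \leq \alpha \, d_{\PP^1(\QQ_p)}(x,y)$, so $f \in \Lip_\alpha$. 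Hence $\Lip_\alpha$ is a closed $\W(k)$-submodule of $(\Lip, \|\bullet\|_{\Lip})$ and is therefore complete.

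There is no serious obstacle in this argument; it is essentially routine once Theorem \ref{prop:Lipcomplete} and Lemma \ref{prop:LipProp} are in hand. The only subtle point is being careful about which norm drives the convergence: control in the stronger $\|\bullet\|_{\Lip}$ norm is more than enough to force pointwise convergence, which is all one needs to pass the Lipschitz bound $\alpha$ to the limiting function.
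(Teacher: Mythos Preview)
Your proof is correct and takes essentially the same approach as the paper. Both rely on Theorem~\ref{prop:Lipcomplete} for completeness of the ambient Lipschitz space and Lemma~\ref{prop:LipProp} for the module structure; the paper extracts the key point that $|f|_L \leq \liminf |f_n|_L$ directly from the proof of Theorem~\ref{prop:Lipcomplete}, while you repackage the same pointwise-limit argument as ``$\Lip_\alpha$ is closed in $\Lip$,'' which is an equivalent formulation.
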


\begin{proof}
It is naturally a module via Lemma~\ref{prop:LipProp}. In the proof of Theorem~\ref{prop:Lipcomplete} the bound on the Lipschitz constant of the limit was the $\liminf$ of the Lipschitz constants of the approximating sequence. Thus if those constants are bounded by $\alpha$ then so is the Lipschitz constant of the limit function.
\end{proof}

The Lipschitz property will be useful in identifying the image of $R$ under the mapping that is defined next. However due to Example~\ref{ex:CtsNotLip} when the supremum norm is used on Lipschitz functions it yields a genuinely different metric space which is more correctly thought of as a subspace of continuous functions. 
%The implication of this is that while the Lipschitz condition is an useful way to identify the images of $R$ under $\Phi$ as a subspace of the continuous functions it provides no additional insight into the metric structure of $R$ in the initial vanishing topology.\

The ring structure of $R$ really is very much like a countable number of copies of the $p$-typical Witt vectors glued together long coordinates identified by the paths in $\tree$. To make this more precise we have the following lemma. This extension is well-defined since any two paths which are distance zero from each other must share all vertices, that is they must be the same path.

\begin{lemma}\label{lem:WittPolyAgree}
Let $T \in \tree$ with size $\# T = p^n$ and let $T_0, T_1, \ldots, T_n = T$ be a path from $\root$ to $T$ in $\tree$. The sum and product polynomials $S_T$ and $M_T$ defining the ring structure on $R$ agree with the sum and product polynomials of the classic $p$-typical Witt vectors up to a relabeling of variables.%$n$-th Witt sum 
\end{lemma}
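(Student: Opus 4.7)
The plan is to apply the uniqueness of Witt sum and product polynomials from Theorem \ref{invertthm} after showing that on the path $T_0, T_1, \ldots, T_n = T$ the $T$-th Witt polynomial coincides with the classical $p$-typical Witt polynomial of length $n+1$. The first step is structural: write $T \cong \ZZ_p^2/H$ for the open subgroup $H$ that is the common stabilizer of the points of $T$. Any $U \leq T$ has the form $\ZZ_p^2/H'$ for some open $H' \supseteq H$ (no conjugation is needed since $\ZZ_p^2$ is abelian), so $U$ is a quotient of the cyclic group $\ZZ_p^2/H$, hence cyclic. Moreover, the subgroups of a cyclic $p$-group are totally ordered, so the $U \leq T$ form a chain that must agree with the unique path from $\root$ to $T$ in $\tree$, namely $T_0, T_1, \ldots, T_n$ with $\#T_i = p^i$.

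Next, since $\ZZ_p^2$ is abelian every $\ZZ_p^2$-set has normal stabilizers, so summary property~(2) in Section~\ref{sec:WB} gives $\varphi_T(T_i) = \#T_i = p^i$, and the defining formula of $W_T$ reduces to
\[
W_T(\mbf X) \;=\; \sum_{i=0}^{n} p^i \, X_{T_i}^{p^{n-i}}.
\]
This is precisely the classical $p$-typical Witt polynomial $W_n(X_0, \ldots, X_n) = \sum_{i=0}^n p^i X_i^{p^{n-i}}$ under the relabeling $X_{T_i} \longleftrightarrow X_i$. Applying the same analysis at each $T_j$ for $0 \leq j \leq n$ identifies the tuple $(W_{T_0}, \ldots, W_{T_n})$ with the classical tuple $(W_0, W_1, \ldots, W_n)$ under the same relabeling.

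Finally, Theorem \ref{invertthm} guarantees that the families $\{S_T\}$ and $\{M_T\}$ are the unique solutions in $\QQ[\underline{X},\underline{Y}]$ of the triangular recursions $W_T(\mbf S) = W_T(\mbf X) + W_T(\mbf Y)$ and $W_T(\mbf M) = W_T(\mbf X) W_T(\mbf Y)$. Because $W_T$ (and each $W_{T_j}$ for $j \leq n$) involves only the variables indexed by $T_i$'s on the path, the recursive solution for $S_T$ and $M_T$ involves only those same variables, and after relabeling this recursion is identical to the one producing the classical $S_n$ and $M_n$. By uniqueness the two coincide, and this agreement descends to $R = \W_{\ZZ_p^2}(k)/J$ since the coordinates indexed by non-cyclic sets are killed in the quotient. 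The main obstacle I anticipate is the first step: confirming that every $U \leq T$ is cyclic and therefore lies on the path in $\tree$, so that $S_T$ and $M_T$ really are polynomials in only the path-variables; once this is in hand the identification with the classical recursion is bookkeeping.
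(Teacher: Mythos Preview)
Your proof is correct and follows essentially the same approach as the paper: identify the $U \leq T$ as the chain of cyclic quotients $\ZZ/p^i\ZZ$, compute $W_T$ explicitly as the classical $n$-th $p$-typical Witt polynomial under the relabeling $X_{T_i}\leftrightarrow X_i$, and conclude by uniqueness that $S_T$ and $M_T$ agree with the classical polynomials. Your version is in fact more carefully justified than the paper's, which simply asserts that the $U\leq T$ are the $\ZZ/p^r\ZZ$ without spelling out the argument via subgroups of a cyclic $p$-group.
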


\begin{proof}
Since $T$ is a cyclic $\ZZ_p^2$-set of size $p^n$, it is isomorphic to $\ZZ/p^n \ZZ$ and the $\ZZ_p^2$-sets $U \leq T$ are isomorphic to  $\ZZ/p^r \ZZ$ with $0 \leq r \leq n$. Identify $T_i = \ZZ/p^i \ZZ$ for $i = 0, \ldots, n$.  The Witt polynomial $W_T$ depends only on the variables $X_{T_i}$ for $i = 0, \ldots, n$ and by definition is $W_T(X_{T_0}, \ldots, X_{T_{n-1}}, X_T) = X_{T_0}^{p^n} + p X_{T_1}^{p^{n-1}} + \cdots + p^n X_T$ which is up to a change of variables the classic $p$-typical $n$-th Witt polynomial. Therefore $S_T$ and $M_T$ are also the classic $p$-typical Witt sum and product polynomials up to the same relabeling of variables. 
%\footnote{This proof seems to be nothing more than citing definitions. Is there something more going on here? --BAS} \footnote{No there isn't but this is a key enough observation that listed it as lemma. Maybe it should be just a remark? -- LEM}
\end{proof}

\begin{definition} 
Define $\Phi \colon R \to \Lip_{p^{-1}}(\PP^1(\QQ_p),\W(k))$ by setting $\Phi(\mbf{a}) := f_\mbf{a}$ where the function $$f_{\mbf{a}} \colon \PP^1(\QQ_p) \to \W(k),$$ has value $f_{\mbf{a}}(x)$ defined by taking the unique rooted path $\root, T_1, T_2, \ldots$ in $\tree$ and setting $$f_{\mbf{a}}(x) = (a_{T_0}, a_{T_1}, a_{T_2}, \ldots) \in \W(k).$$ 
\end{definition}

\begin{theorem}\label{thm:isomorphism} 
The ring $R$  is isomorphic to $\Lip_{p^{-1}}(\PP^1(\QQ_p),\W(k))$.
\end{theorem}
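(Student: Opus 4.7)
My plan is to verify directly that $\Phi$ is a well-defined bijective ring homomorphism, with surjectivity being the principal content.

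First I would show $\Phi$ lands in $\Lip_{p^{-1}}(\PP^1(\QQ_p), \W(k))$. For $x, y \in \PP^1(\QQ_p)$ with $d_{\PP^1(\QQ_p)}(x, y) = p^{-n}$, their rooted paths share $n$ initial edges and therefore $n+1$ initial vertices $T_0 = \root, T_1, \ldots, T_n$. By construction $f_{\mbf{a}}(x)$ and $f_{\mbf{a}}(y)$ then agree in their first $n+1$ coordinates, giving $\|f_{\mbf{a}}(x) - f_{\mbf{a}}(y)\|_{\W(k)} \leq p^{-(n+1)} = p^{-1} \cdot d_{\PP^1(\QQ_p)}(x, y)$, exactly the asserted Lipschitz constant. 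In particular $f_{\mbf{a}}$ is continuous.

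Next, to see $\Phi$ respects the ring structure, I would observe that for a cyclic $\ZZ_p^2$-set $T$, every $U \leq T$ is itself cyclic and corresponds to a vertex on the unique rooted path ending at $T$. Hence the sum and product polynomials $S_T, M_T$ evaluated on admissible vectors depend only on coordinates indexed by this path, and by Lemma~\ref{lem:WittPolyAgree} they agree, after relabeling variables, with the classical $p$-typical Witt polynomials. It follows that the $n$-th coordinate of $f_{\mbf{a}+\mbf{b}}(x)$ matches the $n$-th coordinate of $f_{\mbf{a}}(x) + f_{\mbf{b}}(x)$ computed in $\W(k)$, and similarly for products. Injectivity is then immediate: if $\Phi(\mbf{a}) = \Phi(\mbf{b})$ then $a_T = b_T$ for every cyclic $T$ (each such $T$ lies on some infinite rooted path in $\tree$), which forces $\mbf{a} = \mbf{b}$ in $R$ since non-cyclic coordinates are killed in the quotient by $J$.

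The principal obstacle is surjectivity. Given $g \in \Lip_{p^{-1}}(\PP^1(\QQ_p), \W(k))$, for each vertex $T$ at depth $n$ in $\tree$ I would define $a_T \in k$ to be the $n$-th coordinate of $g(x)$ for any $x \in \PP^1(\QQ_p)$ whose rooted path passes through $T$. The Lipschitz hypothesis is precisely what makes this well-defined: any two such $x, x'$ share $n$ edges, so $d_{\PP^1(\QQ_p)}(x, x') \leq p^{-n}$ and thus $\|g(x) - g(x')\|_{\W(k)} \leq p^{-(n+1)}$, forcing agreement of their first $n+1$ coordinates and in particular the $n$-th. Extending $\mbf{a}$ by zero on non-cyclic coordinates produces an element of $R$; by construction the $n$-th coordinate of $\Phi(\mbf{a})(x)$ equals the $n$-th coordinate of $g(x)$ for every $x$ and every $n$, so $\Phi(\mbf{a}) = g$.

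The main technical point throughout is calibrating the Lipschitz constant $p^{-1}$ to the tree geometry: a factor of $p$ in either direction would break either the membership of $\Phi(\mbf{a})$ in $\Lip_{p^{-1}}$ or the well-definedness of the inverse assignment. Beyond that, everything reduces via Lemma~\ref{lem:WittPolyAgree} to classical $p$-typical Witt arithmetic along each rooted path, since the subposet of cyclic $\ZZ_p^2$-sets below a cyclic $T$ is a chain.
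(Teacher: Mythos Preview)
Your proposal is correct and follows essentially the same approach as the paper's own proof: both verify the Lipschitz bound via the shared-edges count, invoke Lemma~\ref{lem:WittPolyAgree} for the homomorphism property, check injectivity by observing every cyclic $T$ lies on some infinite rooted path, and construct the inverse by reading off the $n$-th Witt coordinate of $g(x)$ for any $x$ through $T$, with well-definedness coming from the $p^{-1}$-Lipschitz condition. Your remark about the calibration of the Lipschitz constant is a nice addition not made explicit in the paper.
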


\begin{proof}
We start by an association between $R$ and $\W(k)$-valued functions on $\PP^1(\QQ_p)$. In particular, for $\mbf{a} \in R$ set $\Phi(\mbf{a}) := f_\mbf{a}$ where the function $f_{\mbf{a}} \colon \PP^1(\QQ_p) \to \W(k),$ has value $f_{\mbf{a}}(x)$ defined by taking the unique rooted path $\root, T_1, T_2, \ldots$ in $\tree$ and setting $f_{\mbf{a}}(x) = (a_{T_0}, a_{T_1}, a_{T_2}, \ldots) \in \W(k)$. 

By Lemma \ref{prop:LipProp} $\Lip_{p^{-1}}(\PP^1(\QQ_p),\W(k))$ is a ring.  When $x$ and $y$ are in $\PP^1(\QQ_P)$ and $d_{\PP^1(\QQ_p)}(x,y) = p^{-n}$, the paths realizing $x$ and $y$ share the first $n$ edges. This means the first $n+1$ components of $f_{\mbf{a}}(x)$ and $f_{\mbf{a}}(y)$ agree, hence 
	$$||f_{\mbf{a}}(x) - f_{\mbf{a}}(y)||_{\W(k)} \leq p^{-(n+1)} = {1 \over p}d_{\PP^1(\QQ_p)}(x,y),$$
 i.e., the function $f_{\mbf{a}} \in  \Lip_{p^{-1}}(\PP^1(\QQ_p),\W(k))$ for $\mbf{a} \in R$. It follows from Lemma~\ref{lem:WittPolyAgree} that $\Phi$ is a homomorphism. 

When $\mbf{a} \neq 0$ then $a_T \neq 0$ for some $T \in \tree$. Therefore, if $x \in \PP^1(\QQ_p)$ is any path in $\tree$ containing $T$ then $\Phi(\mbf{a})(x)$ will not be zero, hence $\Phi$ is injective.  

Given $f \in \Lip_{p^{-1}}(\PP^1(\QQ_p),\W(k))$ and $x$ and $y$ in $\PP^1(\QQ_p)$, $||f(x) - f(y)||_{\W(k)} \leq p^{-1}$ as the diameter of $\PP^1(\QQ_p)$ is $1$. Therefore, we can construct $\mbf{a}_f \in R$ by setting $(\mbf{a}_f)_T = f(x)_{i}$ where $T_0, T_1, \ldots, T_i = T$ is the unique rooted path in $\tree$ from $T_0$ to $T$ and $x$ is any point in $\PP^1(\QQ_p)$ extending this path. This is well defined precisely because $f \in \Lip_{p^{-1}}(\PP^1(\QQ_p),W(k))$. In particular, if $y \in \PP^1(\QQ_p)$ is any other path containing $T_0, T_1, \ldots, T_i = T$ then $f(y)_i = f(x)_i$ as $||f(x) - f(y)||_{\W(k)} \leq {1 \over p}d_{\PP^1(\QQ_p)}(x,y) \leq {1 \over p} p^{-i}$ and so $f(y)_i = f(x)_i$. By construction, $\mbf{a}_f$ is a preimage under $\Phi$ for $f$. 
\end{proof}

%Considering $\Lip_{p^{-1}}(\PP^1(\QQ_p),\W(k))$ as a subring of $\C(\PP^1(\QQ_p), \W(k))$ it inherits a metric defined by the sup-norm $|| \bullet ||_{\sup}$, where $|| f ||_{\sup} = \sup_{x \in  \PP^1(\QQ_p)} \{ || f(x)||_{\W(k)} \}$.  

\begin{theorem}\label{thm:Isometry}
The ring isomorphism $\Phi \colon (R,\|\bullet\|_{\IV}) \rightarrow (\Lip_{p^{-1}}(\PP^1(\QQ_p),\W(k)),\| \bullet \|_{\sup})$ from Theorem~\ref{thm:isomorphism} is an isometry.
%The map $\Phi \colon R \to \Lip_{p^{-1}}(\PP^1(\QQ_p),\W(k))$ is an isometry where $R$ is given its initial vanishing metric and $\Lip_{p^{-1}}(\PP^1(\QQ_p),\W(k))$ is given the metric defined by the sup-norm. 
\end{theorem}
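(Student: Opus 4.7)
The plan is to compute $\|\mbf{a}\|_{\IV}$ and $\|\Phi(\mbf{a})\|_{\sup}$ separately and verify that both equal $1/p^n$, where $n$ is the least integer for which $\mbf{a}$ has a nonzero coordinate at some vertex $T \in \tree$ of size $p^n$. Throughout I identify $\mbf{a} \in R$ with its unique lift to $\W_{\ZZ_p^2}(k)$ supported only on cyclic $\ZZ_p^2$-sets, which is legitimate by Theorem~\ref{thmRS} since cyclic and non-cyclic supports are disjoint.

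From the definition of the initial vanishing norm this gives $\|\mbf{a}\|_{\IV} = 1/p^n$ immediately, since the cyclic $\ZZ_p^2$-sets are exactly the vertices of $\tree$. For the supremum side, fix $x \in \PP^1(\QQ_p) = \partial\tree$ with geodesic ray $\root, T_1, T_2, \ldots$ from the root, so $\#T_i = p^i$. By Lemma~\ref{lem:WittPolyAgree} the tuple $(a_{T_0}, a_{T_1}, \ldots) = f_{\mbf{a}}(x)$ is a classical $p$-typical Witt vector, whose $p$-adic norm in $\W(k)$ equals $1/p^{m(x)}$ where $m(x) := \min\{i : a_{T_i} \neq 0\}$. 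Taking the supremum over $x$,
\[
\|\Phi(\mbf{a})\|_{\sup} = \sup_{x \in \PP^1(\QQ_p)} \frac{1}{p^{m(x)}} = \frac{1}{p^{\min_x m(x)}}.
\]

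It remains to verify $\min_x m(x) = n$. The lower bound $m(x) \geq n$ for every $x$ is automatic, since $a_{T_i} = 0$ whenever $\#T_i < p^n$. For the matching upper bound, select any $T \in \tree$ of size $p^n$ with $a_T \neq 0$; because the Bruhat--Tits tree $\tree$ has no leaves, the finite path from $\root$ to $T$ extends to an infinite rooted ray, yielding a point $x \in \partial\tree$ whose $n$-th vertex is $T$, so $m(x) \leq n$. This completes the identification of the two norms.

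No step is genuinely hard; the substance is that Lemma~\ref{lem:WittPolyAgree} lets one read off the $p$-adic norm of $f_{\mbf{a}}(x)$ coordinate-by-coordinate from $\mbf{a}$, after which the definitions of $\|\cdot\|_{\IV}$ and $\|\cdot\|_{\sup}$ match by inspection. The only detail requiring care is ensuring that the minimum is realized along \emph{some} geodesic ray from the root, which is immediate from the local structure of $\tree$.
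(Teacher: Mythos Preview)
Your proof is correct and follows essentially the same approach as the paper. The paper works directly with $\|\mbf{a}-\mbf{b}\|_{\IV}$ and $\|\Phi(\mbf{a})-\Phi(\mbf{b})\|_{\sup}$, while you show norm-preservation $\|\Phi(\mbf{a})\|_{\sup}=\|\mbf{a}\|_{\IV}$ (which suffices since $\Phi$ is additive); in both cases the key step is identifying each norm with $1/p^n$ for $n$ the minimal size of a nonzero coordinate and realizing the supremum by extending the finite path to a witness vertex $T$ to an infinite ray in $\tree$.
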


\begin{proof}
Given $\mbf{a}$ and $\mbf{b}$ in $R$, it suffices to show $|| \mbf{a} - \mbf{b} ||_{\IV} = || \Phi(\mbf{a}) - \Phi(\mbf{b}) ||_{\sup}$. Suppose $|| \mbf{a} - \mbf{b} ||_{\IV} = p^{-n}$. Writing $\mbf{a} = (a_T)_{T \in \tree}$ and $\mbf{b} = (b_T)_{T \in \tree}$ we have $a_T = b_T$ for any $T \in \tree$ with $\# T = p^i$ and $i \leq n$. For any $x \in \PP^1(\QQ_p)$, realizing $x$ as a path $T_0,T_1,T_2,\ldots$ then $\Phi(\mbf{a})(x)_{T_i} = \Phi(\mbf{b})(x)_{T_i}$ for $i = 0, \ldots, n$.  Therefore we have $|| \Phi(\mbf{a})(x) - \Phi(\mbf{b})(x)||_{\W(k)} \leq p^{-n}$ and so $|| \Phi(\mbf{a}) - \Phi(\mbf{b}) ||_{\sup} \leq || \mbf{a} - \mbf{b} ||_{\IV}$. 

That $|| \mbf{a} - \mbf{b} ||_{\IV} = p^{-n}$, means there is $T \in \tree$ with $\# T = p^{n+1}$ and $a_T \neq b_T$. Pick any $x \in \PP^1(\QQ_p)$, realized as a path $T_0, T_1, T_2, \ldots$ where $T_{n+1} = T$. Note that $\Phi(\mbf{a})(x)_{T_{n+1}} \neq \Phi(\mbf{b})(x)_{T_{n+1}}$ and so $|| \Phi(\mbf{a})(x) - \Phi(\mbf{b})(x)||_{\W(k)} = p^{-n}$. Therefore, we have also that $|| \Phi(\mbf{a}) - \Phi(\mbf{b}) ||_{\sup} = || \mbf{a} - \mbf{b} ||_{\IV},$ as desired. 
\end{proof}

\begin{corollary}\label{cor:maintheorem} 
As metric-topological rings, $(R,\|\bullet \|_{\IV})$ and $(\Lip_{1 \over p}(\PP^1(\QQ_p), \W(k)),\|\bullet \|_{\infty})$ are isomorphic and isometric. 
\end{corollary}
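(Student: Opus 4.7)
The plan is to assemble the corollary directly from the two preceding theorems, since almost all of the content is already packaged there. First I would recall Theorem~\ref{thm:isomorphism}, which provides the map $\Phi \colon R \to \Lip_{p^{-1}}(\PP^1(\QQ_p),\W(k))$ and verifies it is a ring isomorphism. This gives the algebraic half of the statement with no further work.

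For the metric half, I would simply invoke Theorem~\ref{thm:Isometry}, which says that with the initial vanishing norm $\|\bullet\|_{\IV}$ on $R$ and the supremum norm $\|\bullet\|_{\sup}$ on the Lipschitz side, $\Phi$ satisfies $\|\Phi(\mbf{a})-\Phi(\mbf{b})\|_{\sup} = \|\mbf{a}-\mbf{b}\|_{\IV}$ for all $\mbf{a},\mbf{b} \in R$. Since an isometric bijection is automatically a homeomorphism between the underlying metric topologies, and since the ring operations on both sides are continuous with respect to these metrics (addition and multiplication on $R$ are continuous in the initial vanishing topology by Theorem~\ref{thm:Fill} and the norm bounds in Theorem~\ref{thm:normmult}, while on the Lipschitz side they are continuous under the supremum norm by Lemma~\ref{prop:LipProp}), $\Phi$ is a simultaneous ring isomorphism and isometry, i.e., an isomorphism of metric-topological rings.

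There is no hard step here; this is a repackaging. The only minor point to note is the compatibility of notation: $\|\bullet\|_\infty$ in the corollary is the same as $\|\bullet\|_{\sup}$ used in Theorem~\ref{thm:Isometry}, which should be remarked on for clarity. Thus the proof consists of a single sentence combining the cited theorems.
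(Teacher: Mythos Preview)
Your proposal is correct and matches the paper's approach exactly: the corollary is stated immediately after Theorems~\ref{thm:isomorphism} and~\ref{thm:Isometry} with no separate proof, since it is simply the conjunction of those two results. Your observation about the notational identification $\|\bullet\|_\infty = \|\bullet\|_{\sup}$ is a helpful clarification that the paper leaves implicit.
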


%\begin{proof}
%This follows directly from Theorem~\ref{thm:isomorphism} and Theorem~\ref{thm:Isometry}. 
%\end{proof}

\begin{example}\label{ex:CtsNotLip}
Let $T_0,T_1,T_2,\ldots$ be a rooted, infinite path in $\tree$. Let $a_{T_i}$ be the admissible vector which zero everywhere except for $T_i$ where it is $1$. Consider $f_i = f_{a_{T_i}}$. For all $i \ge 1$ $|f_i|_L = p^{-1}$ and $\|f_i\|_{\infty} = p^{-i}$. In the supremum norm this sequence of functions on $\B{P}^{1}(\B{Q}_p)$ converges to the zero function. However $\|f_i\|_{\Lip} \rightarrow p^{-1}$, so this sequence does not converge as a sequence of Lipschitz functions. 

Let $\tree_i = \{ T \in \tree :\ \#T = p^{i}\}$. Let $a_i$ be $1$ on $\tree_i$ and zero elsewhere and $f_i = f_{a_i}$. Then $|f_i|_L = 0$ and $\| f_i \|_{\infty} = p^{-i}$. This sequence of functions converges in both the Lipschitz and uniform senses to the zero function. 

In fact these examples show that $$\| \cdot \|_{\infty} \le \| \cdot \|_{\Lip} \le \| \cdot \|_{\infty}+ \frac{1}{p}.$$ 
\end{example}

\begin{remark}\label{rem:typmet} Consider the arguments in this section for the $p$-typical Witt vectors in place of $R$. In this case the underlying graph is not a $p+1$-regular tree but a rooted chain whose boundary is a single point $x_0$ and the `functions' one obtains by proving Theorem~\ref{thm:Isometry} are $\Lip_{p^{-1}}(\{x_0\}, \W(k)) \cong \W(k)$. 
\end{remark}

\section{Applications}

\subsection{Krull Dimension of $\W_{\ZZ_p^d}(k)$ for $d \geq 2$} There are non-noetherian rings of finite dimension. For example, every boolean ring has Krull dimension zero, however the boolean ring $\C(X, \ZZ/2 \ZZ)$ can fail to be  noetherian.  In fact, for $X$ a compact, totally disconnected, and Hausdorff space, $\C(X, \ZZ/2\ZZ)$ is only noetherian when $X$ is finite. Likewise, there are well-known examples from Nagata of infinite dimensional noetherian rings \cite{Nag62}. Our first application shows that $\W_{\ZZ_p^d}(k)$ for $d \geq 2$ has infinite Krull dimension.
%\footnote{You start with a rather strong claim here about what is expected and then show that there are straightforward known counter examples. I am not sure this is the best that this paragraph can be. -- BAS} 
%\footnote{ How is this? -- LEM}

To describe the dimension of $\W_{\ZZ_p^d}(k)$  we must have a good understanding of its prime ideals. From the discussion in Section~\ref{sec:PI}, we already know an incomplete but natural collection of prime ideals in $\W_{\ZZ_p^2}(k)$. 
%but that they are not all of them. The reason we know there are more does not apply to the quotient $R$, as this natural family of ideals now has zero intersection.
%\footnote{This follows directly from how $R$ is defined, in particular $R$ is the quotient of $\W_{\ZZ_p^2}(k)$ by the desired intersection. -- LEM} 
%Are there more prime ideals in $R$? We exploit ultrafilters to prove that $\W_{\ZZ_p^d}(k)$ is indeed infinite dimensional when $d \geq 2$. 
We turn to an ultrafilter construction to find more prime ideals. We review what is necessary here, please see \cite[Ch. 1 and 2]{Sch10} for more details. 

\begin{definition} An ultrafilter $\cU$ on a  set $S$ is a collection of subsets of $S$ which satisfies 
\begin{enumerate}
\item $\emptyset \not\in \cU$, 
\item $\cU$ is closed under intersections and taking supersets, 
\item for any subset $R \subset S$ either $R \in \cU$ or $S \setminus R \in \cU$. 
\end{enumerate}
\end{definition}

The only ultrafilters of a given set $S$ that can be easily written down are the principal ultrafilters which contain a singleton subset of $S$. We are interested in non-principal ultrafilters of $S$, i.e., those that do not contain any finite subset of $S$, which can be shown to exist by Zorn's lemma.  
%\footnote{You had CITE here, this is standard material, but if you want I can find a reference. Other things I've written for non-standard analysis have not given a reference, but just l me know if you really want me to find one before a potential review asks us to.  -- LEM}

The proof presented draws inspiration from a similar proof of the infinite dimensionality of the ring of entire functions on $\CC$, which is described in \cite[App. B]{Osb00}. In fact, Lemma~\ref{lem:ultraprime} is a restatement of \cite[Prop. B.13]{Osb00}, we include his proof here for convenience of the reader. The analogous result to \cite[Cor. B.14]{Osb00} required some modification to become Lemma \ref{lem:funzero}.

It is more convenient to work with the $p$-adic valuation rather than norms. In particular, for a $p$-typical Witt vector $\mbf{a} \in \W(k)$ denote by $\ord \mbf{a} = - \log_p ||\mbf{a}||_{\W(k)}$ and note for any $\mbf{a}, \mbf{b} \in \W(k)$ one has $\ord ( \mbf{a} + \mbf{b}) \geq \max\{ \ord(\mbf{a}), \ord(\mbf{b}) \}$ and $\ord ( \mbf{a} \mbf{b} ) = \ord ( \mbf{a} ) + \ord ( \mbf{b} )$. This may seem odd as the valuation and its associated norm are essentially the same data, however we prefer the valuation as it is not multiplicative which comes up in the proof of Lemma~\ref{lem:ultraprime}. First we show how the primes are constructed. 

\begin{lemma}\label{lem:ultraprime}
Let $\cU$ be a non-principal ultrafilter on $\PP^1(\QQ_p)$ such that there is a countable discrete set $D \in \cU$. Let $\sigma \colon D \to \W(k)$ be any set function. Set $\fp_\sigma \subset \Lip_{p^{-1}}(\PP^1(\QQ_p), \W(k))$ such that there is a set $A \in \cU$ with $A \subset D$ and a positive real number $c \in \RR_{\geq 0}$ such that $$\ord f(z) \geq c \cdot  \ord \sigma(z) \text{ for all } z \in A.$$ The set $\fp_\sigma$ is a prime ideal. 
\end{lemma}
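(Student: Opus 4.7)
The plan is to verify in order that $\fp_\sigma$ contains $\mbf{0}$ and is closed under addition, that it absorbs multiplication by arbitrary elements of $\Lip_{p^{-1}}(\PP^1(\QQ_p), \W(k))$, and finally that it satisfies the primality property. Throughout I would work with the additive valuation $\ord$ so that the ultrametric triangle inequality $\ord(f+g)(z) \geq \min(\ord f(z), \ord g(z))$ and the multiplicativity $\ord(fg)(z) = \ord f(z) + \ord g(z)$ on the DVR $\W(k)$ are both available.

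For the additive subgroup condition, the zero function has $\ord 0(z) = \infty$ and so trivially witnesses $\mbf{0} \in \fp_\sigma$. If $f, g \in \fp_\sigma$ are witnessed by pairs $(A_f, c_f)$ and $(A_g, c_g)$, then $A_f \cap A_g$ still lies in $\cU$ by closure under finite intersections, and on this intersection one has $\ord(f+g)(z) \geq \min(\ord f(z), \ord g(z)) \geq \min(c_f, c_g) \cdot \ord \sigma(z)$, which witnesses $f+g \in \fp_\sigma$ with positive constant $\min(c_f, c_g)$. For the ideal property, if $f \in \fp_\sigma$ has witness $(A, c)$ and $h \in \Lip_{p^{-1}}(\PP^1(\QQ_p), \W(k))$ is arbitrary, then since $h(z) \in \W(k)$ has $\ord h(z) \geq 0$ we obtain $\ord(hf)(z) = \ord h(z) + \ord f(z) \geq c \cdot \ord \sigma(z)$ on $A$, so $(A, c)$ witnesses $hf \in \fp_\sigma$.

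For primality, suppose $fg \in \fp_\sigma$ with witness $(A, c)$, so that $\ord f(z) + \ord g(z) \geq c \cdot \ord \sigma(z)$ for every $z \in A$. Partition $A$ into $B = \{z \in A : \ord f(z) \geq (c/2) \cdot \ord \sigma(z)\}$ and its relative complement $A \setminus B$. The ultrafilter property applied to $B$ and $\PP^1(\QQ_p) \setminus B$, together with closure under intersections with $A \in \cU$, forces exactly one of $B$ or $A \setminus B$ to lie in $\cU$. In the first case the pair $(B, c/2)$ directly witnesses $f \in \fp_\sigma$. In the second case, for each $z \in A \setminus B$ one rearranges to get $\ord g(z) \geq c \cdot \ord \sigma(z) - \ord f(z) > (c/2) \cdot \ord \sigma(z)$, so $(A \setminus B, c/2)$ witnesses $g \in \fp_\sigma$.

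I expect the main obstacle to be the bookkeeping in the primality step, specifically confirming that the partitioned witness sets $B$ and $A \setminus B$ inherit both the requirement of belonging to $\cU$ and the requirement of sitting inside $D$. The latter is automatic since $B, A \setminus B \subseteq A \subseteq D$, while the former reduces to the standard ultrafilter fact that if $A \in \cU$ and $B \subseteq A$, then $B \in \cU$ or $A \setminus B \in \cU$; the only mild subtlety is the choice of the cutoff constant $c/2$, which must be strictly positive to preserve the defining inequality of $\fp_\sigma$.
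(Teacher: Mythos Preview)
Your proof is correct and follows essentially the same approach as the paper's: verify closure under addition via the ultrametric inequality on $\ord$ and intersection of witness sets, absorption via $\ord h(z)\geq 0$, and primality by splitting the witness set and invoking the ultrafilter dichotomy. The only cosmetic difference is in the primality step: the paper partitions $A$ by comparing $\ord f(z)$ to $\ord g(z)$, whereas you partition by comparing $\ord f(z)$ to $(c/2)\,\ord\sigma(z)$; both yield the constant $c/2$ and the same conclusion.
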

\begin{proof}
Suppose $f,g \in \fp_\sigma$ and let $A \in \cU$ be such that $A \subset D$ and let $c > 0$ be such that $\ord f(z) \geq c \ord \sigma(z)$ for all $z \in A$. Also set $B \subset D$ with $B \in \cU$ and $d > 0$ such that $\ord g(z) \geq d \ord \sigma(z)$ for all $z \in B$. Note that $A \cap B \in \cU$ and $A \cap B \subset D$. Since $\ord(f + g) \geq \max\{\ord(f), \ord(g)\}$ we have that $\ord(f(z)+g(z)) \geq \max\{c,d\} \ord \sigma(z)$ for all $z \in A \cap B$. 

Now suppose that $h \in \Lip_{p^{-1}}(\PP^1(\QQ_p), \W(k))$ and $f \in \fp_\sigma$. Let $A \in \cU$ be such that $A \subset D$ and let $c > 0$ be such that $\ord f(z) \geq c \ord \sigma(z)$ for all $z \in A$. Since at any point,  $\ord(hf) = \ord(h) + \ord(f) \geq \ord(f)$ we have that $\ord(h(z)f(z)) \geq c \ord(\sigma(z))$ for all $z \in A$. So $\fp_\sigma$ is an ideal. 

Finally, suppose $fg \in \fp_\sigma$ and $A \in \cU$ be such that $A \subset D$ and let $c > 0$ be such that $\ord f(z) g(z) \leq c \ord \sigma(z)$ for all $z \in A$. Consider the set $B = \{ z \in A \colon \ord f(z) \geq \ord g(z) \}$. When $B \in \cU$, we have for any $z \in B$,  $\ord(f(z)g(z)) = \ord(f(z)) + \ord(g(z)) \geq 2\ord(g(z))$. Thus, for any $z \in A \cap B$ we have $2 \ord(g(z)) \geq c \ord( \sigma(z))$ and so $g \in \fp_\sigma$. A similar argument shows that $f \in \fp_{\sigma}$ when $\PP^1(\QQ_p) \setminus B \in \cU$. Thus $\fp_\sigma$ is a prime ideal. 
\end{proof}

The rest of the argument rests on specifying a countable family of functions $\sigma_i$ so that the associated primes form an infinite chain. To show the inclusions involved are strict, we need to guarantee the existence of particular Lipschitz functions. 

\begin{lemma}\label{lem:funzero}

%For each $\ell > 0$, there is a countable discrete set  $D = \{d_1, d_2, \ldots \} \subset \PP^1(\QQ_p)$ such that for any $i \in \{1,2,\ldots, \ell\}$, there is a function $f \in \Lip_{p^{-1}}(\PP^1(\QQ_p), \W(k))$ such that $f(d_n) = \omega_{p^{n+i}}(1)$.
There is a countable discrete set $E = \{e_1, e_2, \ldots \} \subset \PP^1(\QQ_p)$ such that for any $i \in \NN$, there is a function $f_i \in \Lip_{p^{-1}}(\PP^1(\QQ_p), \W(k))$ with the property that $f_i(e_n) = \omega_{p^{n+i}}(1)$. 

\end{lemma}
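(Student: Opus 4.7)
The strategy is to build $E$ as a family of boundary paths branching off a fixed infinite trunk at successively deeper vertices, and to realize each $f_i$ as the image under $\Phi$ (Theorem~\ref{thm:isomorphism}) of a very sparse admissible vector in $R$. I fix a rooted geodesic $x_0 = (T_0, T_1, T_2, \ldots)$ in $\tree$ and, using that $\tree$ is $(p+1)$-regular with $p+1 \geq 3$, choose for each $n \geq 1$ a child $T_n^{*}$ of $T_{n-1}$ different from $T_n$; then I extend $T_0, T_1, \ldots, T_{n-1}, T_n^{*}$ to an infinite path $e_n \in \PP^1(\QQ_p)$ arbitrarily (say, by always choosing a fixed child at each successive step). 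The paths $e_n$ and $e_m$ for $n \neq m$ share exactly the first $\min(n,m)-1$ edges of the trunk and then diverge, so $d_{\PP^1(\QQ_p)}(e_n,e_m) = p^{-(\min(n,m)-1)}$; this makes $E := \{e_n : n \geq 1\}$ countable, and since the ball of radius $p^{-n}$ about $e_n$ meets $E$ only in $e_n$, it is discrete.

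For each $i \in \NN$, let $v_{n,i}$ denote the vertex at depth $n+i$ along the path $e_n$, so that $v_{n,0} = T_n^{*}$ and, for $i \geq 1$, $v_{n,i}$ sits strictly inside the subtree dangling off the trunk at $T_n^{*}$. Define $\mbf{a}^{(i)} \in R$ by assigning the value $1 \in k$ to each $v_{n,i}$ ($n \geq 1$) and $0$ to every other vertex of $\tree$. The key combinatorial observation is that the subtrees attached to the trunk at $T_1^{*}, T_2^{*}, \ldots$ are pairwise disjoint and each is disjoint from the trunk itself beyond its own root, so the vertices $\{v_{n,i}\}_{n \geq 1}$ are distinct and, crucially, $v_{n,i}$ lies on $e_n$ but on no other $e_m$. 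Consequently, reading coordinates along $e_m$ yields a $p$-typical Witt vector that is zero except in position $m+i$, where it equals $1$; that is, $f_i := \Phi(\mbf{a}^{(i)})$ satisfies $f_i(e_m) = \omega_{p^{m+i}}(1)$ for every $m$.

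The membership $f_i \in \Lip_{p^{-1}}(\PP^1(\QQ_p),\W(k))$ is automatic from Theorem~\ref{thm:isomorphism}, which identifies $R$ with that space. The only nontrivial step is the tree-combinatorial verification that the branch subtrees are mutually disjoint and avoid the trunk past their roots; this simultaneously makes $\mbf{a}^{(i)}$ well-defined and guarantees the prescribed values along every $e_m$.
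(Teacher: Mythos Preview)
Your argument is correct and follows essentially the same approach as the paper: both constructions produce a discrete sequence of boundary paths that branch off a common ``trunk'' at successively deeper levels, and both define $f_i$ as $\Phi$ of an admissible vector supported on a single vertex at depth $n+i$ along each $e_n$. The only difference is cosmetic---you fix the trunk $x_0$ first and let $e_n$ leave it at depth $n$, whereas the paper builds the $e_n$ successively with $d(e_{n-1},e_n)=p^{-(n-1)}$ so that the trunk emerges as their limit; this shifts the indexing by one but changes nothing of substance.
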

\begin{proof} The following construction is illustrated in Figure~\ref{fig:pathsfun}. 
Let $e_1 \in \PP^1(\QQ_p)$. Successively choose $e_n,\ n \ge 2$ in the following manner: let $e_2 \in \PP^1(\QQ_p)$ be such that $d_{\PP^1(\QQ_p)}(e_1,e_2) = p^{-1}$. As paths in $\tree$, $e_1$ and $e_2$ will share exactly one common edge. Choose $e_3 \in \PP^1(\QQ_p)$ so that $d_{\PP^1(\QQ_p)}(e_2,e_3) = p^{-2}$, that is the paths $e_2$ and $e_3$ share two edges while $e_1$ and $e_3$ share a single edge. In general choose $e_n \in \PP^1(\QQ_p)$ such that $d_{\PP^1(\QQ_p)}(e_{n-1},e_n) = p^{-n+1}$ which will force $e_{n-1}$ and $e_n$ to share $n-1$ edges as paths in $\tree$. Denote $E = \{e_n\}_{n \ge 1}$, which is a countable discrete set in $\PP^1(\QQ_p)$ with $||e_n - e_m||_{\PP^1(\QQ_p)} = p^{-\min(n,m)}$ for all $e_n, e_m \in E$.

%Let $m$ and $n$ be natural numbers. Since  $$|| \omega_{p^{n+i}} - \omega_{p^{m+i}}||_{\W(k)} = p^{-\min(n+i,m+i)}$$ it is necessary to pick $D = \{ d_1, d_2, \ldots\} $ so that for each $n$ and $m$ we have $$p^{-\min(n+i,m+i)} \leq {1 \over p}||d_n - d_m||_{\PP^1(\QQ_p)}.$$ Suppose $||d_n - d_m||_{\PP^1(\QQ_p)} = p^{-k}$. We are asking that $k + 1 \leq \min(n+i, m+i) = \min(n,m) + 1$ and this is guaranteed by the construction of $D = \{d_1,d_2,\ldots \}$. 

We now define the function $f_i$. This is easier done using Theorem~\ref{thm:isomorphism} as we can now think of Witt vectors in $R$. To each $e_n$ associate the corresponding path $\root, T_{1,n}, T_{2,n}, \ldots$ in $\tree$ and the distance bound on $E$ means exactly that $T_{ m, m } = T_{m, n}$ for all $m \leq n$ and fixed $m \in \NN$ and these are the only equalities among the paths. Let $\mbf{f}_i = (f_{i,T})_{T \in \tree} \in R$ be defined by setting $f_{i,T} = 1$ when $T = T_{n+i,n}$ and $0$ otherwise. Because $T_{n+i,n}$ only appears in the path $e_n$ and in no other paths then we can conclude that for any infinite path from $\root$ there is exactly one non-zero label on the path.

See Figure~\ref{fig:pathsfun} where the locations of the `$1$' indicates the only nonzero values for $f_i$ in the case when $i = 1$. When $i > 1$ these nonzero values are pushed farther along the respective paths. Since this is in $R$, its image under the map defined in Theorem~\ref{thm:isomorphism} is a function in $\Lip_{p^{-1}}(\PP^1(\QQ_p), \W(k))$ and by construction $f_i(e_n) = \omega_{p^{n+i}}(1)$.

\begin{figure}[t]
\begin{tikzpicture}[inner sep=0pt, scale=0.4pt]
\tikzstyle{dot}=[fill=black,circle,minimum size=4pt]
\tikzstyle{rdot}=[fill=red,circle,minimum size=4pt]
\tikzstyle{bdot}=[fill=blue,circle,minimum size=4pt]
\tikzstyle{gdot}=[fill=darkgreen,circle,minimum size=4pt]
\tikzstyle{pdot}=[fill=magenta,circle,minimum size=4pt]
%[dot/.style={fill=black,circle,minimum size=3pt}],
%rdot/.style={fill=red,circle,minimum size=3pt}]]

%\tikzstyle{dot} = [fill=black,circle,minimum size=3pt]
%rdot/.style={fill=red,circle,minimum size=3pt}]
%bdot/.style={fill=blue,circle,minimum size=3pt}]
%gdot/.style={fill=darkgreen,circle,minimum size=3pt}]

%\node[fill=magenta,circle,minimum size=3pt] (v0) at (0,0) {};

\node[gdot] (v0) at (0,0) {};

\node[gdot] (v1) at (2,3.5) {};
\node[dot] (v2) at (2,0) {};
\node[dot] (v3) at (2,-3.5) {};

\node[gdot] (v4) at (4,4) {};
\node[bdot] (v5) at (4,3) {};

\node at (4,5) {$1$};

\node[dot] (v6) at (4,0.5) {};
\node[dot] (v7) at (4,-0.5) {};

\node[dot] (v8) at (4,-3) {};
\node[dot]  (v9) at (4,-4) {};

\node[gdot] (v11) at (8,5.25) {};
\node[dot] (v10) at (8,4.25) {};

\node[bdot] (v13) at (8,3.25) {};
\node[pdot] (v12) at (8,2.25) {};

\node at (8.5,3.25) {$1$};

\node[dot] (v14) at (8,0.25) {};
\node[dot] (v15) at (8,1.25) {};

\node[dot] (v16) at (8,-0.25) {};
\node[dot] (v17) at (8,-1.25) {};

\node[dot] (v18) at (8,-2.25) {};
\node[dot] (v19) at (8,-3.25) {};

\node[dot] (v20) at (8,-4.25) {};
\node[dot] (v21) at (8,-5.25) {};

\draw[color=darkgreen] (v0) -- (v1);
\draw (v0) -- (v2);
\draw(v0) -- (v3);

\draw [color=darkgreen](v1) -- (v4);
\draw [color=blue](v1) -- (v5);

\draw (v2) -- (v6);
\draw (v2) -- (v7);

\draw (v3) -- (v8);
\draw (v3) -- (v9);

\draw (v4) -- (v10);
\draw [color=darkgreen](v4) -- (v11);

\draw [color=magenta](v5) -- (v12);
\draw [color=blue](v5) -- (v13);

\draw (v6) -- (v14);
\draw (v6) -- (v15);

\draw (v7) -- (v16);
\draw (v7) -- (v17);

\draw (v8) -- (v18);
\draw(v8) -- (v19);

\draw (v9) -- (v20);
\draw (v9) -- (v21);

\fill[black] (9.5,0) circle (0.05cm);
\fill[black] (9.7,0) circle (0.05cm);
\fill[black] (9.9,0) circle (0.05cm);

\draw (11,6) -- (11,-6);

\draw[color=darkgreen,thick] (10.6,5.25) -- (11.4,5.25);

\node at (12,5.25) {$\color{darkgreen} e_1$}; 

\draw[color=magenta,thick] (10.6,2.25) -- (11.4,2.25);

\node at (12,2.25) {$\color{magenta} e_3$};

\draw[color=blue,thick] (10.6,3.25) -- (11.4,3.25);

\node at (12,3.25) {$\color{blue} e_2$}; 

\node at (5,-7.5) {$\tree$};

\node at (12,-7.5) {$\PP^1(\QQ_p)$};

%\path (0:0cm) node (v0) {$v_0$};
%
%\path (30:0.25cm) node (v1) {};
%\path (150:0.25cm) node (v2) {};
%\path (270:0.25cm) node (v3) {};
%
%
%\path (10:0.5cm) node (v4) {};
%\path (50:0.5cm) node (v5) {};
%\path (130:0.5cm) node (v6) {};
%\path (170:0.5cm) node (v7) {};
%\path (250:0.5cm) node (v8) {};
%\path (290:0.5cm) node (v9) {};
%
%\path (0:0.75cm) node (v10) {};
%\path (20:0.75cm) node (v11) {};
%
%
%\path (40:0.75cm) node (v12) {};
%\path (60:0.75cm) node (v13) {};
%
%\path (120:0.75cm) node (v14) {};
%\path (140:0.75cm) node (v15) {};
%
%\path (160:0.75cm) node (v16) {};
%\path (180:0.75cm) node (v17) {};
%
%\path (240:0.75cm) node (v18) {};
%\path (260:0.75cm) node (v19) {};
%
%\path (280:0.75cm) node (v20) {};
%\path (300:0.75cm) node (v21) {};

%\draw (v0) -- (v1)
%(v0) -- (v2)
%(v0) -- (v3)
%(v1) -- (v4)
%(v1) -- (v5)
%(v2) -- (v6)
%(v2) -- (v7)
%(v3) -- (v8) 
%(v3) -- (v9)
%(v4) -- (v10)
%(v4) -- (v11)
%(v5) -- (v12)
%(v5) -- (v13)
%(v6) -- (v14) 
%(v6) -- (v15)
%(v7) -- (v16)
%(v7) -- (v17)
%(v8) -- (v18)
%(v8) -- (v19)
%(v9) -- (v20)
%(v9) -- (v21);

%\node (LT) at (-7,1) {Log Terminal};
%\node (LC) at (-7,-1){Log Canonical};
%\node (R) at (-3,1){Rational};
%\node(DB) at (-3,-1){Du Bois};
% 
%
%\draw[->] (LT) edge[double] (LC);
%\draw[->] (LT) edge[double] (R);
%\draw[->] (R) edge[double] (DB);
%\draw[->] (LC) edge[double] (DB);

\end{tikzpicture}
\caption{The set $E$ and the function $f_i$ with $i = 1$.}
\label{fig:pathsfun}
\end{figure}
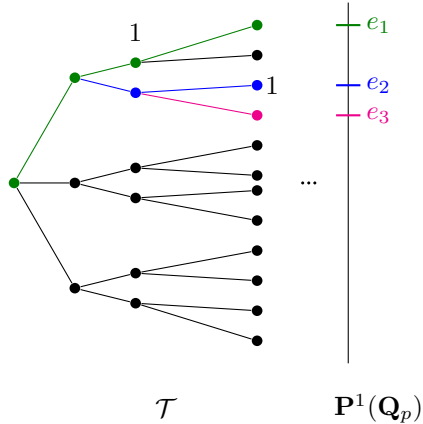

\begin{remark}\label{rmk:conversionlineartoexponential}
The set $E$ depends only on the metric of $\PP^1(\QQ_p)$ and is independent of $i$. The application of Lemma~\ref{lem:funzero} will occur in Theorem~\ref{thm:diminfinite} justifying the existence for any $i \geq 2$, there is a function $f_i$ such that $f_i(e_n) = \omega_{p^{n^i}}(1)$, because $n^i$ may be written as $n + z$ for some positive integer $z$. 
\end{remark}

%Suppose $||f(x) - f(d_n)||_{\W(k)} \leq {1 \over p} || x - d_n||_{\PP^1(\QQ_p)}$ for all $x \in \PP^1(\QQ_p)$ and $d_n \in D$, and for any $x$ and $y$ in $\PP^1(\QQ_p)$ if $||x - y || \leq || x - d_n||$ then $|f(x) - f(y)|| \leq ||f(x) - f(d_n)||$. 
%
%Suppose  $x$ and $y$ are any two points in $\PP^1(\QQ_p)$ then $$|| f(x) - f(y) || = || f(x) - f(d_n) + f(d_n) - f(y) || \leq \max\{ ||f(x) - f(d_n)||, || f(d_n) - f(y) || \}.$$ 
%
%Choose $d_n \in D$ so that $||f(x) - f(y)|| \leq || f(x) - f(d_n)||$ and $|| x - y || \leq ||x - d_n||$. Suppose $|| f(x) - f(y) || \leq ||f(x) - f(d_n)|| \leq {1 \over p} || x - d_n||$ and $||x - y || \leq \max\{ ||x - d_n|| , ||d_n - y|| \}$. 
%

%First set 
%\begin{displaymath}
%g_z^k(x) = 
%\begin{cases}
%0 & \text{ if $\# T < p^n,$} \\
%pX_T & \text{if $\# T = p^n$,} \\
% \sum\limits_{\stackrel{U<T}{\# U = p^n}} \frac{p \varphi_T(U)}{\varphi_T(T)}(1-p^{p-1}) & \text{if $\# T = p^{n+1}$.  }\\
%\end{cases}
%\end{displaymath}

%
%Thus, $f$ is in $\Lip_{p^{-1}}(\PP^1(\QQ_p), \W(k))$. 

\end{proof}%\footnote{I merged our proves and changed all the $`d'$'s to $`e'$'s. Let me know what you think. -- LEM}

%\begin{proof}
%Let $d_1 \in \PP^1(\QQ_p)$. Successively choose $d_l,\ l \ge 2$ in the following manner: $d_2 \in \PP^1(\QQ_p)$ such that $d_{\PP^1(\QQ_p)}(d_1,d_2) = p^{-1}$, as paths in $\tree$, $d_1$ and $d_2$ will share exactly one common edge. Choose $d_3 \in \PP^1(\QQ_p)$ so that $d_{\PP^1(\QQ_p)}(d_2,d_3) = p^{-2}$, that is $d_2$ and $d_3$ share two edges while $d_1$ and $d_3$ share a single edge. In general choose $d_l \in \PP^1(\QQ_p)$ such that $d_{\PP^1(\QQ_p)}(d_{l-1},d_l) = p^{-l+1}$ which will force $d_{l-1}$ and $d_l$ to share $l-1$ edges as paths in $\tree$. Denote $D = \{d_l\}_{l \ge 1}$, which is a countable discrete set in $\PP^1(\QQ_p)$.
%
%We now define a family of admissible vectors, $\mbf{a}_i$, using the paths corresponding to the $\{d_l\}$. For each infinite rooted path in $\tree$, say $r$, give every vertex on $r$ the label zero if $d_{\PP^1(\QQ_p)}(r,d_l) = 1$ for all $l \ge 1$. For $r \in \PP^1(\QQ_p)$ such that $d_{\PP^1(\QQ_p)}(r,d_l) <1$ for some $l \ge 1$ let $n(p) = \max \{l \colon d_{\PP^1(\QQ_p)}(r,d_l) = p^{-l} \}$ then along each path $r$ label every vertex zero except for the one of weight $p^{l+i}$ which is labeled with a $1$. Since this is an admissible vector define $f_i = \Phi(\mbf{a}_i)$.
%\end{proof}

Now we are set to demonstrate the dimension calculation. 

\begin{theorem}
\label{thm:diminfinite}
The Krull dimension of $\W_{\ZZ_p^d}(k)$ is infinite for $d \geq 2$. 
\end{theorem}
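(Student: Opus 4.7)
The plan is to construct an infinite strictly descending chain of prime ideals in $R \cong \Lip_{p^{-1}}(\PP^1(\QQ_p), \W(k))$ and pull it back to $\W_{\ZZ_p^d}(k)$ along the composite ring surjection $\W_{\ZZ_p^d}(k) \twoheadrightarrow \W_{\ZZ_p^2}(k) \twoheadrightarrow R$ provided by Example~\ref{homZpr} and Theorem~\ref{thm:isomorphism}. Since preimages of prime ideals under a ring surjection are prime and strict inclusions pull back to strict inclusions, this would give $\dim \W_{\ZZ_p^d}(k) = \infty$. So it suffices to work in $R$.

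First I would fix a non-principal ultrafilter $\cU$ on $\PP^1(\QQ_p)$ which contains the countable discrete set $E = \{e_n\}_{n\geq 1}$ produced by Lemma~\ref{lem:funzero}. The existence of such $\cU$ follows from Zorn's lemma applied to the filter generated by $E$ together with the cofinite subsets of $E$ viewed in $\PP^1(\QQ_p)$; this collection has the finite intersection property and its extension to an ultrafilter is automatically non-principal, because every member is infinite.

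For each integer $i \geq 1$ I would define $\sigma_i \colon E \to \W(k)$ by $\sigma_i(e_n) = \omega_{p^{n^i}}(1)$, so that $\ord \sigma_i(e_n) = n^i$, and let $\fp_i := \fp_{\sigma_i}$ be the prime ideal of $\Lip_{p^{-1}}(\PP^1(\QQ_p), \W(k))$ given by Lemma~\ref{lem:ultraprime}. The nested inclusions $\fp_1 \supseteq \fp_2 \supseteq \fp_3 \supseteq \cdots$ are immediate: if $f \in \fp_{i+1}$, witnessed by some $A \in \cU$ with $A \subseteq E$ and constant $c > 0$ such that $\ord f(e_n) \geq c n^{i+1}$ for $e_n \in A$, then since $n \geq 1$ the same $A$ and $c$ witness $\ord f(e_n) \geq c n^i$, so $f \in \fp_i$.

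The key step, and the main obstacle, is proving each inclusion $\fp_{i+1} \subsetneq \fp_i$ is strict. Here I would invoke the function $f_i \in \Lip_{p^{-1}}(\PP^1(\QQ_p), \W(k))$ of Lemma~\ref{lem:funzero} (with the exponent replacement from Remark~\ref{rmk:conversionlineartoexponential}) satisfying $f_i(e_n) = \omega_{p^{n^i}}(1)$, so that $\ord f_i(e_n) = n^i$. Taking $A = E$ and $c = 1$ shows $f_i \in \fp_i$. If $f_i$ were also in $\fp_{i+1}$, there would exist $A' \in \cU$ with $A' \subseteq E$ and $c' > 0$ satisfying $n^i = \ord f_i(e_n) \geq c' n^{i+1}$ for every $e_n \in A'$; this forces $n \leq 1/c'$, making $A'$ finite and contradicting the non-principality of $\cU$. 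Hence $f_i \in \fp_i \setminus \fp_{i+1}$, the chain $\fp_1 \supsetneq \fp_2 \supsetneq \fp_3 \supsetneq \cdots$ is strictly descending, and its pullback through the surjections above exhibits an infinite strictly descending chain of prime ideals in $\W_{\ZZ_p^d}(k)$, completing the proof.
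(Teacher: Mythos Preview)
Your proposal is correct and follows essentially the same approach as the paper: reduce to $R\cong\Lip_{p^{-1}}(\PP^1(\QQ_p),\W(k))$, pick a non-principal ultrafilter containing the discrete set $E$ from Lemma~\ref{lem:funzero}, use the growth functions $\sigma_i(e_n)=\omega_{p^{n^i}}(1)$ to define the primes $\fp_{\sigma_i}$ via Lemma~\ref{lem:ultraprime}, and separate them using the functions $f_i$ of Lemma~\ref{lem:funzero} (as in Remark~\ref{rmk:conversionlineartoexponential}). One minor citation quibble: the surjection $\W_{\ZZ_p^2}(k)\twoheadrightarrow R$ comes from the definition $R=\W_{\ZZ_p^2}(k)/J$, not from Theorem~\ref{thm:isomorphism}, which supplies the identification of $R$ with the Lipschitz function ring.
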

\begin{proof}
It suffices to show that $\W_{\ZZ_p^2}(k)$ is infinite dimensional as $\W_{\ZZ_p^d}(k)$ surjects onto $\W_{\ZZ_p^2}(k)$ when $d \geq 2$, and so $\dim \W_{\ZZ_p^d}(k) \geq \dim \W_{\ZZ_p^2}(k)$. It also suffices to show that $R$ has infinite dimension. Since $R \cong \Lip_{p^{-1}}(\PP^1(\QQ_p), \W(k))$ by Theorem~\ref{thm:isomorphism} we chose to work in the latter ring. 

Construct $E = \{e_1, e_2, \ldots \}$ as in Lemma~\ref{lem:funzero} and pick $\cU$ any non-principal ultrafilter on $\PP^1(\QQ_p)$ containing $E$ (such an ultrafilter exists as we can start with any non-principal filter containing $E$ and construct an ultrafilter containing it). For $i \in \NN$ set $\sigma_i(e_n) = \omega_{p^{n^i}}(1)$. By Lemma~\ref{lem:ultraprime}, to each $\sigma_i$ we obtain a prime ideal $\fp_{\sigma_i}$. Assume $i < j$. We claim $\fp_{\sigma_j} \subset \fp_{\sigma_i}$.  For any $f \in \fp_{\sigma_j}$ there is $A \subset E$ and $c > 0$ such that for any $e_n \in A$ we have  $\ord(f(e_n)) \geq c \ord ( \sigma_j( e_n))$. By definitions, we have 
\begin{eqnarray*}
\ord(f(e_n)) &\geq & c \ord ( \sigma_j( e_n)) \\
 & = & cn^j \\
 & \geq & c n^i.
\end{eqnarray*} Thus we have $f \in \fp_{\sigma_i}$ and a chain of primes $$\fp_{\sigma_1} \supset \fp_{\sigma_2} \supset \cdots.$$ Use Lemma~\ref{lem:funzero} with $i \geq 2$ to pick $f_i \in \Lip_{p^{-1}}(\PP^1(\QQ_p), \W(k))$ such that $\ord f_i(e_n) = n^i$, see Remark~\ref{rmk:conversionlineartoexponential}. Therefore $f_i \in \fp_{\sigma_i}$ with the choice $A = E$ and $c = 1$ as in Lemma \ref{lem:ultraprime}.

We show that $f_i \not\in \fp_{\sigma_j}$ for $i < j$ by contradiction. Provided $f_i \in \fp_{\sigma_j}$, there is $A \in \cU$ with $A \subset E$ and $c > 0$ such that for any $e_n \in A$, $n^i = \ord ( f(e_n))  \geq c \ord (\sigma_j(e_n)) = c n^j$. This means $n^{i - j } \geq c$ and since $i < j$, there are only finitely many values $n$ for which this holds. Thus the set $A$ is finite as it contains only finitely many $e_n$. This contradicts the choice of $\cU$ as a  non-principal ultrafilter. Thus $\fp_{\sigma_j} \subsetneq \fp_{\sigma_i}$ for any $i \geq 2$ and $j > i$ and so $\dim \Lip_{p^{-1}}(\PP^1(\QQ_p), \W(k))$ is infinite.
% $n+i \leq c(n+j)$ which means $ n ( c - 1) \leq i - cj$. In the case that $c > 1$ we have $n \leq { i - cj \over c -1 }$ which now is either positive or negative. In either situation there is a contradiction as this would force $A$ to be either finite or empty; either case contradicts $\cU$ being a non-principal ultrafilter. In the case $c \leq 1$, we have $n + i \geq c( n+j )$ and is an integer, therefore $n + i \geq \lceil c (n + j) \rceil = n + j$ which gives $i \geq j$ which is again a contradiction. Thus $\fp_{\sigma_j} \subsetneq \fp_{\sigma_i}$ and so $\dim \Lip_{p^{-1}}(\PP^1(\QQ_p), \W(k))$  is infinite.
\end{proof}

%\begin{remark} Here is a different possible contradiction and likely better contradiction. This was the original one in the Osborne paper. Let  $\sigma_i(a_n) = \omega_{p^{n^i}}(1)$. By Lemma~\ref{lem:ultraprime}, to each $\sigma_i$ we obtain a prime ideal $\fp_{\sigma_i}$. Since $\ord \sigma_i(a_n) = -n^i$, when $i < j$ we have $-n^i > -n^j$. Therefore when $f \in \fp_{\sigma_j}$ we have a set $A \in \cU$ with $A \subset D$ and $c > 0$ such that for any $z \in A$, $\ord f(z) \leq c(-n^j) < c(-n^i)$ and so $f \in \fp_{\sigma_i}.$ Thus we have a chain of primes $$\fp_{\sigma_1} \supset \fp_{\sigma_2} \supset \cdots.$$
%
%Now assume $i < j$ and  $f \in \Lip_{p^{-1}}(\PP^1(\QQ_p), \W(k))$ with $\ord f(d_n) = -n^i$. Such a function exists by Lemma~\ref{lem:funzero}. By definition $f \in \fp_{\sigma_i}$ with the choice $A = D$ and $c = 1$. If $f \in \fp_{\sigma_j}$ then there is a set $A \subset D$ and $c > 0$ such that $\ord f(d_n) \leq c ( -n^j)$ for all $d_n \in A$. Therefore for all $d_n \in A$, we have $-n^i \leq c (-n^j)$, i.e., $n^{i - j} \geq c$ which can happen for only finitely many $n$ as $i - j < 0$ which is a contradiction as $\cU$ is a non-principal ultrafilter.
%\end{remark}

\subsection{Topological basis for $\Lip_{p^{-1}}(\PP^1(\QQ_p), \W(k))$}\label{sec:vdP}

%Here we discuss M\"ahler and van der Put basis for $\Lip_{1 \over p}(\PP^1(\QQ_p), W(k))$ and Teichm\"uller basis for $R$ and how they align. This needs to contain a review of the vdP basis from \cite{Sch06} for $C(\ZZ_p)$ and the extension of the Stone-Weierstrass theorem for compact non-archimedian metric spaces so that it extends to $C(\PP^1(\QQ_p))$. 

One of the main tools of studying non-archimedean function spaces, for example such as $\C(X,\ZZ_p)$ where $X$ is a topological space, is through topological bases which allow for a rich integration theory. Most familiar is the Mahler basis for $\C(\ZZ_p, \CC_p)$ or more generally $\C(\ZZ_p, K)$ where $K$ is a non-archimedean valued field. It consists of the functions $x \mapsto {x \choose n} = {{x (x-1) \cdots (x -n +1)} \over n!}$ for $n = 0, 1, \ldots$ which form an orthonormal basis for $\C(\ZZ_p, K)$. One might ask: Does $\Lip_{p^{-1}}(\PP^1(\QQ_p), \W(k))$ have a natural orthonormal basis? Since $\Lip_{p^{-1}}(\PP^1(\QQ_p), \W(k)) \cong R$ is isometric and $R$ has a particularly nice basis of \Teich elements, we know that $\Lip_{p^{-1}}(\PP^1(\QQ_p), \W(k))$ does as well.  This basis is unlike the Mahler basis because the basis elements are localized functions which makes them much more like the {\it van der Put basis} of $\C(\ZZ_p,K)$ \cite{Sch06}.
%The answer is it is sort of global (in that the domain is $\PP^1(\QQ_p)$) Lipschitz analogue of another orthonormal basis for $\C(\ZZ_p,K)$ called a {\it van der Put basis} \cite{Sch06}. 

For convenience of the reader we pause to review the van der Put bases in the more classic setting $\C(\ZZ_p,K)$ where $K$ is a non-archimedean valued field.  Let $x \in \ZZ_p$ and $m \in \ZZ$. We say that {\it $x$ starts with $m$} when $|x - m|_p < {1 \over m}$ and this relationship is denoted $m \lhd x$. A {\it van der Put function} is defined as $ e_n(x) = 1$ when $n \lhd x$ and is $0$ otherwise. By \cite[Thm. 62.2]{Sch06} the collection $\{e_n(x)\}_{n=0}^{\infty}$ is a basis of $\C(\ZZ_p,K)$ where $K$. Even better \cite[Thm. 63.2]{Sch06} gives a characterization of which elements of $\C(\ZZ_p,K)$ are in $\Lip_\alpha(\ZZ_p,K)$ for $\alpha > 0$ in terms of this basis. Note that the van der Put functions $e_n(x)$ have as their support the open ball $B(n,p^{-j})$ where $|n|_p = p^{-j}$; i.e., $e_n(x)$ is the indicator of a ball in $\ZZ_p$. Using this characterization of the van der Put basis as indicator functions we can extend the basis from functions on $\ZZ_p$ to functions on $\PP^1(\QQ_p)$ by the following.

\begin{definition}
A subset $V$ of $C(\PP^1(\QQ_p),\W(k))$ is a {\it van der Put} basis provided it consists of all indicator functions of balls of positive radius in $\PP^1(\QQ_p)$.
\end{definition}

These indicators are continuous functions on $\PP^1(\QQ_p)$ but their Lipschitz constants are unbounded. In order to fit them into $\Lip_{p^{-1}}$ they can be scaled to reduce their supremum norm until their Lipschitz constant is $p^{-1}$. This does require a different scalar multiple for each basis element, however the scalar depends only on the size of the support of the basis element.

\begin{theorem}
\label{thm:TeichvdP}
Denote by $B \subset \Lip_{p^{-1}}(\PP^1(\QQ_p),\W(k))$ the image under $\Phi$ of the Teichm\"uller basis for $R$. 
The set $B$ consists of elements of the
%\footnote{which? I think what you were trying to say in the earlier draft is that there is an analogue vdP for something like $\C(\ZZ_p, \W(k))$. This is what we should describe precisely before this theorem and state it as what you mean here. -- LEM} 
van der Put basis for $C(\PP^1(\QQ_p),\W(k))$ scaled to have Lipschitz constant $1/p$. 
\end{theorem}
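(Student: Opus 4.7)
The plan is to compute $\Phi(\omega_T(1))$ explicitly for each $T \in \tree$ and verify that it coincides with a scalar multiple of the indicator of a ball in $\PP^1(\QQ_p)$, with the scalar being exactly the one that produces Lipschitz constant $p^{-1}$. Writing $n := \log_p(\# T)$, the vector $\omega_T(1) \in R$ has $1$ in coordinate $T$ and $0$ elsewhere, so from the explicit formula for $\Phi$ in Theorem~\ref{thm:isomorphism}, for any $x \in \PP^1(\QQ_p)$ realized by a rooted path $T_0, T_1, T_2, \ldots$ one reads off
\[
\Phi(\omega_T(1))(x) = \begin{cases} \omega_n(1) \in \W(k) & \text{if } T = T_n, \\ 0 & \text{otherwise,} \end{cases}
\]
where $\omega_n(1)$ is the $p$-typical Teichm\"uller lift landing in coordinate $n$, of norm $p^{-n}$ in $\W(k)$.

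Next I identify the support. Set $B(T) := \{x \in \PP^1(\QQ_p) : T \text{ lies on the rooted path of } x\}$, consisting of the infinite paths extending the finite rooted path from $T_0$ to $T$. Under the metric $d_{\PP^1(\QQ_p)}$, this is precisely the closed ball of radius $p^{-n}$ centered at any one of its points, and conversely every ball of positive radius in $\PP^1(\QQ_p)$ arises uniquely as some $B(T)$; this is immediate from the correspondence between vertices of $\tree$ and balls coming from the description of the metric via shared initial edges. Therefore
\[
\Phi(\omega_T(1)) = \omega_n(1) \cdot \mathbf{1}_{B(T)},
\]
so each Teichm\"uller image is a scalar multiple of a van der Put basis element, and this correspondence is a bijection onto the collection of such scaled indicators.

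Finally I verify the scaling. For $n \geq 1$, the indicator $\mathbf{1}_{B(T)}$ itself has Lipschitz constant exactly $p^{n-1}$: for any $x \in B(T)$ one can choose $y \notin B(T)$ whose rooted path branches from that of $x$ exactly at the parent of $T$, giving $d_{\PP^1(\QQ_p)}(x,y) = p^{-(n-1)}$ while $\|\mathbf{1}_{B(T)}(x) - \mathbf{1}_{B(T)}(y)\|_{\W(k)} = 1$, and no closer straddling pair exists because any $y \notin B(T)$ must branch off from the rooted path to $T$ at an ancestor of depth less than $n$. Multiplying by $\omega_n(1)$, a scalar of norm $p^{-n}$, rescales the Lipschitz constant to $p^{n-1} \cdot p^{-n} = p^{-1}$, as claimed. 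The only degenerate case is $T = \root$ with $n=0$, in which case $\Phi(\omega_{\root}(1)) = \mathbf{1}_{\PP^1(\QQ_p)}$, the multiplicative identity, with Lipschitz constant $0 \leq p^{-1}$; this matches the constant term of the expansion from Theorem~\ref{Rpart}. The main pitfall to watch is the combinatorial claim pinning the straddling distance at exactly $p^{-(n-1)}$, which reduces to tracking shared rooted-path edges in $\tree$.
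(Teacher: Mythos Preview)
Your proof is correct and takes a genuinely different route from the paper's. The paper argues indirectly: it restricts to a copy of $\ZZ_p$ inside $\partial\tree$, invokes Schikhof's characterization of $\Lip_\alpha(\ZZ_p,K)$ in terms of van der Put coefficient bounds (Exercise~63.B of \cite{Sch06}), and then checks that the coefficients of the pushed-forward Teichm\"uller expansion have the right $K$-norms. Your argument is instead a direct computation: you evaluate $\Phi(\omega_T(1))$ from the definition of $\Phi$, recognize it as $\omega_n(1)\cdot\mathbf{1}_{B(T)}$, and verify the Lipschitz constant by an elementary straddling estimate in the tree metric. This is cleaner, self-contained, and avoids the restriction-to-$\ZZ_p$ maneuver; it also makes the bijection between vertices $T$ and balls $B(T)$ explicit, which the paper's proof leaves implicit. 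The paper's approach, on the other hand, ties the result back to the classical $\ZZ_p$ theory and the coefficient criterion \eqref{eq:LipnormvdP}, which is useful context even if less direct.

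One small remark: you treat only $\omega_T(1)$, whereas the Teichm\"uller basis as used in Theorem~\ref{Rpart} consists of all $\omega_T(a)$ for $a\in k$. The same computation gives $\Phi(\omega_T(a))=\omega_n(a)\cdot\mathbf{1}_{B(T)}$, still a scalar of $\W(k)$-norm $p^{-n}$ (for $a\neq 0$) times an indicator, so the Lipschitz constant is again $p^{-1}$; you may want to add a sentence to that effect.
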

\begin{proof}
Consider the part of the boundary of $\tree$ that consists of only $p$ of the $p+1$ subtrees coming from the root, $T_0$. This portion of the boundary is isometric to $\ZZ_p$. We may consider the results of \cite{Sch06} concerning $\C(\ZZ_p,K)$  where $K$ is the fraction field of $\W(k)$ as applying locally in our setting. Exercise 63.B of \cite{Sch06} says that $f \in \C(\ZZ_p,K)$ is actually in $\Lip_{\alpha}(\ZZ_p,K)$ provided 
$$ f (x)= \sum_{n=0}^{\infty} a_n e_n(x)$$
is the van der Put expansion of $f$ and the coefficients, $a_n$ satisfy
\begin{equation}
	\sup\{ |a_n|_K n^{1/\alpha} \} < \infty. \label{eq:LipnormvdP}
\end{equation}
As a consequence of Theorem \ref{thm:TeichvdP} the push forward of a Teichm\"uler expansion of an element of $R$ to a van der Put expansion of an element of $\C(\PP^1(\QQ_p),\W(k))$ can be restricted to a function over a copy of $\ZZ_p$. The coefficients of this expansion will be of the form $qp^{- \lfloor \log_p(n) \rfloor}$ for $q \in \FF_p$. These coefficients have $K$-norm bounded above by $n$ and there exists a subsequence, $n = p^{m}$, along which the norm is $n$. So for $\alpha \ge 1$ the supremum in (\ref{eq:LipnormvdP}) is finite. 
\end{proof}

\end{document}